\tikzstyle{uStyle}=[shape = circle, minimum size = 20pt, inner sep =2.5pt, outer sep = 0pt, draw, fill=white]
\tikzstyle{mystyle}=[shape = rectangle, minimum size = 20pt, inner sep =2.5pt, outer sep = 0pt, draw, fill=white]
\newenvironment{clmproof}[1]{\par\noindent\underline{Proof.}\space#1}{\leavevmode\unskip\penalty9999\hbox{}\nobreak\hfill\quad\hbox{$\diamondsuit$}\smallskip}
\newtheorem{lem}{Lemma}
\newtheorem{cor}[lem]{Corollary}
\newtheorem{conj}{Conjecture}
\newtheorem{thm}{Theorem}
\newtheorem*{mainthm}{Main Theorem}
\newtheorem*{keylem}{Key Lemma}
\newtheorem*{thmA}{Theorem A}
\theoremstyle{definition}
\newtheorem{prob}{Problem}
\newtheorem{defn}{Definition}
\newtheorem{clm}{Claim}
\newtheorem{example}[conj]{Example}
\newcommand{\aside}[1]{\marginnote{\scriptsize{#1}}[0cm]}
\newcommand{\aaside}[2]{\marginnote{\scriptsize{#1}}[#2]}
\newcommand\Emph[1]{\emph{#1}\aside{#1}}
\newcommand\EmphE[2]{\emph{#1}\aaside{#1}{#2}}
\newcommand\LL{\mathcal{L}}
\title{Kempe Equivalent List Colorings}
\def\vph{\varphi}
\def\erdos{Erd\"{o}s}
\def\LL{\mathcal{L}}
\def\BB{\mathcal{B}}
\def\AA{\mathcal{A}}
\def\HH{\mathcal{H}}
\def\Z{\mathbb{Z}}
\def\dbox{\!\!\!\!\!\qed\,}
\author{Daniel W. Cranston\thanks{%
Department of Computer Science, Virginia Commonwealth
University, Richmond, VA, USA;
\texttt{dcranston@vcu.edu}
} 
\and Reem Mahmoud\thanks{%
Department of Mathematics and Applied Mathematics, Virginia Commonwealth
University, Richmond, VA, USA; 
\texttt{mahmoudr@vcu.edu}
}
}
\begin{document}
\maketitle

\begin{center}
\vspace{-.2in}
\textit{\small{Dedicated to the memory of Landon Rabern.}}
\end{center}

\begin{abstract}
An $\alpha,\beta$-Kempe swap in a properly colored graph interchanges the colors
on some component of the subgraph induced by colors $\alpha$ and $\beta$.
Two $k$-colorings of a graph are $k$-Kempe equivalent if we can form one from
the other by a sequence of Kempe swaps (never using more than $k$ colors).  Las
Vergnas and Meyniel showed that if a graph is $(k-1)$-degenerate, then each
pair of its $k$-colorings are $k$-Kempe equivalent.  Mohar conjectured the same
conclusion for connected $k$-regular graphs.  This was proved for $k=3$ by
Feghali, Johnson, and Paulusma (with a single exception $K_2\dbox K_3$, also
called the 3-prism) and for $k\ge 4$ by Bonamy, Bousquet, Feghali,
and Johnson.

In this paper we prove an analogous result for list-coloring.  For a
list-assignment $L$ and an $L$-coloring $\vph$, a Kempe swap is called $L$-valid for
$\vph$ if performing the Kempe swap yields another $L$-coloring.  Two
$L$-colorings are called $L$-equivalent if we can form one from the other by a sequence
of $L$-valid Kempe swaps.  Let $G$ be a connected $k$-regular graph with $k\ge
3$ and $G\ne K_{k+1}$.  We prove that if $L$ is a $k$-assignment, then all $L$-colorings are
$L$-equivalent (again excluding only $K_2\dbox K_3$).  
Further, if $G\in\{K_{k+1},K_2\dbox K_3\}$, $L$ is a $\Delta$-assignment, but
$L$ is not identical everywhere, then all $L$-colorings of $G$ are $L$-equivalent.
When $k\ge 4$, the proof is completely self-contained, implying an
alternate proof of the result of Bonamy et al.

Our proofs rely on the following key lemma, which may be of independent
interest.  Let $H$ be a graph such that for every degree-assignment $L_H$ all
$L_H$-colorings are $L_H$-equivalent.  If $G$ is a connected graph that
contains $H$ as an induced subgraph, then for every degree-assignment $L_G$ for
$G$ all $L_G$-colorings are $L_G$-equivalent.
\end{abstract}

\section{Introduction}
An $\alpha,\beta$-Kempe swap in a proper (partial) coloring of a graph
interchanges the colors on some component of the subgraph induced by colors
$\alpha$ and $\beta$. 
Thus, a Kempe swap always yields another proper (partial) coloring.
Kempe swaps were introduced in the late 1800s in an
attempt to prove the 4 Color Theorem.  Given a partial proper coloring, we aim
to perform one or more Kempe swaps to reach another partial proper coloring
that can be extended to an additional vertex.  By repeating this process, we
eventually color the whole graph, hopefully.  This initial effort to prove the
4 Color Theorem was unsuccessful.  However, Heawood salvaged the
idea and used it to prove the 5 Color Theorem.  Kempe swaps are particularly
useful in the study of edge-coloring, which is equivalent to coloring line
graphs.\footnote{This is because in a line graph each subgraph recolored by a Kempe swap
must be a path or an even length cycle.  So it is easier to understand how the
coloring changes after a single Kempe swap.}

Although Kempe swaps are still used as a tool in constructing proper colorings,
they have also sparked a new interest.  Two proper $k$-colorings of a graph $G$
are \emph{(Kempe) $k$-equivalent} if we can form one from the other by a sequence
of Kempe swaps (never using more than $k$ colors).  When all $k$-colorings of a
graph are $k$-equivalent, we have a
natural way to sample a $k$-coloring randomly.  Starting from an arbitrary
$k$-coloring, we perform a sequence of Kempe swaps, each chosen randomly from
those available in the current coloring, until our current $k$-coloring is
``almost'' equally likely to be any one of the $k$-colorings of $G$.  To
formalize all this, and clarify some technical details, we would use Markov
chains (which we avoid in this paper).  This problem and many related ones have
been studied widely under the name
\emph{reconfiguration}~\cite{Nishimura, Heuvel}.  Given two instances of some type of object, we
ask whether we can ``reconfigure'' one to be the other, using a sequence of
reconfiguration steps.  In our case, the objects are $k$-colorings and the
reconfiguration steps are Kempe swaps.

In the late 1970s, Meyniel~\cite{meyniel} proved that if $G$ is a planar graph,
then all of its 5-colorings are 5-equivalent; see
also~\cite{meyniel-strengthened}.  This was extended by Las Vergnas
and Meyniel~\cite{lVM}, who
proved the same conclusion for all $K_5$-minor-free graphs.  They also showed
that if $G$ is $(k-1)$-degenerate, then all of its $k$-colorings are
$k$-equivalent; Lemma~\ref{degen-lem} below generalizes this result to list coloring.
Mohar conjectured that if $G$ is connected and $k$-regular, then all of its
$k$-colorings are $k$-equivalent.  This is a natural next step, since the
sparsest graphs that are not $(k-1)$-degenerate are $k$-regular.  
Mohar's Conjecture was proved for $k=3$ by Feghali, Johnson, and
Paulusma~\cite{FJP} (with a single exception $K_2\dbox K_3$) and for $k\ge 4$
by Bonamy, Bousquet, Feghali, and Johnson~\cite{BBFJ}; this was reproved in a
stronger form in~\cite{BDL-D}.
In this paper our Main Theorem (stated on the next page)
is an analogous result for list-coloring.

A \emph{block} in a graph is a maximal 2-connected subgraph.
A \emph{list-assignment}\aside{block, list-assignment} $L$ for a graph $G$ gives each $v\in V(G)$ a
list $L(v)$ of allowable colors.
Given $L$, an \EmphE{$L$-coloring}{4mm} is a proper coloring $\vph$
such that $\vph(v)\in L(v)$ for all $v\in V(G)$.  For an $L$-coloring $\vph$, a
Kempe swap is \emph{$L$-valid}\aaside{\mbox{$L$-valid},~$L$-equivalent}{-2.5mm}
(for $\vph$) if performing the Kempe swap on $\vph$ yields
another $L$-coloring.  Two $L$-colorings are \emph{$L$-equivalent} if we can
form one from the other by a sequence of $L$-valid Kempe swaps.  
A \emph{$k$-assignment}
(resp.~\emph{$f$-assignment})\aside{\mbox{$k$-/$f$-/degree-} \mbox{assignment}} to a graph $G$ is a list
assignment $L$ such that $|L(v)|=k$ (resp.~$|L(v)|=f(v)$), for each $v\in V(G)$,
where $k$ is some constant (resp.~$f:V(G)\to\mathbb{Z}^+$).  
A \emph{degree-assignment} to $G$ is a list assignment $L$ such that
$|L(v)|=d(v)$ for each $v\in V(G)$.  Given $L$, a graph $G$ is
\emph{$L$-swappable} if it has some $L$-coloring\footnote{By requiring at least
one $L$-coloring, we can state many of our results more cleanly.} and all of
its $L$-colorings are $L$-equivalent.
A graph $G$ is \emph{degree-swappable}\aaside{\mbox{$L$-/$k$-/$f$-} /degree-
swappable}{-8mm}
(resp.~\emph{$k$-swappable} or \emph{$f$-swappable}) if $G$ is
$L$-swappable for every degree-assignment $L$ (resp.~$k$-assignment or
$f$-assignment $L$).
To illustrate some of the key ideas in this paper,
we now reprove a helpful lemma of Las Vergnas and Meyniel~\cite{lVM},
generalized to the context of list-coloring.

\begin{lem}
\label{degen-lem}
Let $G$ be a connected graph, let $L$ be a list-assignment for $G$, and fix $v\in V(G)$ with
$|L(v)|>d(v)$.  Let $G':=G-v$ and let $L'$ denote $L$ restricted to $G'$.
If $G'$ is $L'$-swappable, then $G$ is $L$-swappable.
\end{lem}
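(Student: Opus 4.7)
The plan is to lift an $L'$-valid swap sequence on $G'$ to an $L$-valid swap sequence on $G$, using the slack $|L(v)|>d(v)$ to adjust $v$'s color whenever necessary.

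As a warm-up observation, if two $L$-colorings $\vph$ and $\psi$ of $G$ agree on every vertex of $G'$, then they are $L$-equivalent: writing $a=\vph(v)$ and $b=\psi(v)$, both colors lie in $L(v)$, and since $\vph=\psi$ on $N(v)$, neither is used by any neighbor of $v$; hence the $a,b$-component of $v$ in $G$ is $\{v\}$, and a single $L$-valid Kempe swap recolors $v$ from $a$ to $b$.

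Now fix $L$-colorings $\vph_1,\vph_2$ of $G$. By $L'$-swappability there is a sequence $s_1,\ldots,s_m$ of $L'$-valid swaps on $G'$ transforming $\vph_1|_{G'}$ into $\vph_2|_{G'}$. I would lift this to $G$ step by step, maintaining an $L$-coloring $\vph^{(i)}$ of $G$ whose restriction to $G'$ equals the coloring after $s_1,\ldots,s_i$. When $s_i$ is an $\alpha,\beta$-swap on a $G'$-component $C$, two cases lift directly: (i) $\vph^{(i-1)}(v)\notin\{\alpha,\beta\}$; or (ii) $\vph^{(i-1)}(v)\in\{\alpha,\beta\}$ but $C$ contains no neighbor of $v$. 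In both cases $C$ remains a component of the $\alpha,\beta$-subgraph of $G$, so $s_i$ is $L$-valid on $G$.

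The remaining case is when, say, $\vph^{(i-1)}(v)=\alpha$ and $C$ contains a $\beta$-neighbor of $v$. Let $A=L(v)\setminus\{\vph^{(i-1)}(u):u\in N(v)\}$, which is nonempty by hypothesis. If $A$ contains some $\gamma\notin\{\alpha,\beta\}$, I first perform the trivial $\alpha,\gamma$-swap at $v$ (the $\alpha,\gamma$-component of $v$ in $G$ is $\{v\}$) and then lift $s_i$ via case (i). Otherwise $A=\{\alpha\}$, which forces $|L(v)|=d(v)+1$, forces the neighbors of $v$ to use $d(v)$ pairwise distinct colors filling $L(v)\setminus\{\alpha\}$, and in particular forces $\beta\in L(v)$ with $v$ having exactly one $\beta$-neighbor $u$. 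Since $u\in C$, the $\alpha,\beta$-component of $v$ in $G$ is exactly $\{v\}\cup C$, and the $\alpha,\beta$-swap on $\{v\}\cup C$ is $L$-valid and reproduces the effect of $s_i$ on $G'$ (while recoloring $v$ to $\beta$). The main obstacle will be this tight subcase---verifying that the merger is confined to $\{v\}\cup C$, which hinges on the uniqueness of the $\beta$-neighbor forced by tightness. After $m$ such steps we obtain $\vph^{(m)}$ with $\vph^{(m)}|_{G'}=\vph_2|_{G'}$, and the warm-up observation finishes the proof.
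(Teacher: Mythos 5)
Your proposal is correct and follows essentially the same route as the paper: lift the $L'$-valid swap sequence step by step, recolor $v$ with a spare color from $L(v)$ when it would interfere, and in the tight case where no spare exists, absorb $v$ into the swapped component (your counting argument that $A=\{\alpha\}$ forces $\beta\in L(v)$ and a unique $\beta$-neighbor is just the contrapositive of the paper's observation that a spare color $\gamma$ exists whenever $|L(v)\cap\{\alpha,\beta\}|=1$ or $v$ has degree at least $2$ in the component). The final adjustment of $v$'s color via a trivial swap also matches the paper's last step.
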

\begin{proof}
Assume $G'$ is $L'$-swappable.
Let $\vph_1$ and $\vph_2$ denote $L$-colorings of $G$, and let $\vph'_1$ and
$\vph'_2$ denote their restrictions to $G'$.  Since $G'$ is $L'$-swappable,
there exist $L'$-colorings $\psi'_0,\ldots,\psi'_t$ of $G'$ such that
$\psi'_0=\vph'_1$ and $\psi'_t=\vph'_2$ and $\psi'_i$ differs from
$\psi'_{i-1}$ by a single $L'$-valid Kempe swap, for each $i\in [t]$.  Now we
extend each $\psi'_i$ to an $L$-coloring $\psi_i$ of $G$ such that $\psi_i$ and
$\psi_{i-1}$ are $L$-equivalent.  Suppose that $\psi'_i$ differs from
$\psi'_{i-1}$ by an $\alpha,\beta$-swap at $v_i$, for some $v_i\in V(G)$ and
some colors $\alpha$ and $\beta$.  If $\psi_{i-1}(v)\notin\{\alpha,\beta\}$ or
if $v$ is not in the same $\alpha,\beta$-component of $\psi_{i-1}$ as $v_i$,
then we form $\psi_i$ from $\psi_{i-1}$ by performing the same
$\alpha,\beta$-swap at $v_i$.  This approach also works if
$\{\alpha,\beta\}\subseteq
L(v)$ and $v$ is in the same $\alpha,\beta$-component as $v_i$, but $v$ has
degree 1 in that component.  So suppose that $v$ is in the same
$\alpha,\beta$-component as $v_i$, but either $|L(v)\cap \{\alpha,\beta\}|=1$
or $v$ has degree at least 2 in that $\alpha,\beta$-component.  Since
$|L(v)|>d(v)$, there exists $\gamma\in L(v)$ that is unused by $\psi_{i-1}$
on the closed neighborhood of $v$.  We first recolor $v$ with $\gamma$, and
then perform the $\alpha,\beta$-swap at $v_i$.  By induction on $i$, this
gives an $L$-coloring $\psi_t$ that restricts to $\psi'_t$.  Lastly, if
$\psi_t(v)\ne \vph_2(v)$, then we recolor $v$ with $\vph_2(v)$.
\end{proof}

\begin{cor}
\label{degen-cor}
A graph $G$ is $L$-swappable whenever there exists a vertex order in which each
vertex $x$ is preceded by fewer than $|L(x)|$ neighbors.  In particular, $G$ is
$L$-swappable when $G$ is $(k-1)$-degenerate and $L$ is a $k$-assignment.  This
includes the special case that $k:=\Delta$ and $G$ is connected, but not
regular.
\end{cor}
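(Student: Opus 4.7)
The plan is to induct on $|V(G)|$, using Lemma~\ref{degen-lem} as the engine. In the base case $|V(G)|=1$, the single vertex $v$ satisfies $|L(v)|>0$, and any two $L$-colorings differ by an $L$-valid Kempe swap on the monochromatic component $\{v\}$, so $G$ is trivially $L$-swappable.

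For the inductive step, let $v$ be the last vertex in the given order; then $v$ has $d(v)<|L(v)|$ neighbors, all of which lie in $G':=G-v$. The order restricted to $V(G')$ still satisfies the hypothesis, since each remaining vertex has the same set of preceding neighbors (minus possibly $v$). By the inductive hypothesis, $G'$ is $L'$-swappable, where $L':=L|_{G'}$. Applying Lemma~\ref{degen-lem} to $G$ and $v$ then yields that $G$ is $L$-swappable.

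For the first specialization, a $(k-1)$-degenerate graph by definition admits a vertex order in which each vertex is preceded by at most $k-1<|L(x)|$ neighbors, so the general statement applies directly. For the second specialization ($k:=\Delta$, $G$ connected and non-regular), I would pick $v\in V(G)$ with $d(v)<\Delta$, run a BFS rooted at $v$, and reverse the resulting order. Then $v$ is last with $d(v)<\Delta$ preceding neighbors, and every other vertex $u$ has its BFS-parent appearing later in the reversed order, leaving at most $d(u)-1\le\Delta-1<|L(u)|$ preceding neighbors. The general statement applies once more.

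The only real subtlety is that Lemma~\ref{degen-lem} is stated for connected graphs, while $G-v$ in the inductive step may be disconnected. This is handled either by noting that the proof of Lemma~\ref{degen-lem} is purely local around $v$ (a private color $\gamma\in L(v)$ is chosen avoiding the closed neighborhood of $v$), so connectivity is inessential; or by invoking induction on each component of $G-v$ separately and observing that $L'$-swappability distributes over connected components, since Kempe swaps in one component are $L'$-valid independently of the coloring on the others. Beyond this bookkeeping, no obstacle arises.
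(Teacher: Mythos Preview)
Your proof is correct and follows essentially the same approach as the paper: induction on $|V(G)|$ with Lemma~\ref{degen-lem} applied to the final vertex, then deriving the two specializations from degeneracy orders and distance-from-a-low-degree-vertex orders, respectively. Your explicit handling of the connectivity hypothesis in Lemma~\ref{degen-lem} is a detail the paper glosses over; your observation that the argument is local to $N[v]$ (or alternatively that swappability distributes over components) is the right fix.
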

\begin{proof}
We prove the first statement by induction on $|V(G)|$.  The base case $|V(G)|=1$
holds trivially.  The induction step holds by Lemma~\ref{degen-lem}, taking
$v$ to be the final vertex in the order.  The second statement follows from the
first, using any order that witnesses that $G$ is $(k-1)$-degenerate.
Finally, the third statement obviously follows from the second, when we order the
vertices by non-increasing distance from some vertex of degree less than $k$.
\end{proof}

Now we can state our Main Theorem and outline its proof.

\begin{mainthm}
If $G$ is a connected graph with $\Delta\ge 3$ and $G\notin\{K_2\dbox K_3,
K_{\Delta+1}\}$, then $G$ is $\Delta$-swappable.
If $G\in\{K_2\dbox K_3,K_{\Delta+1}\}$ and $L$ is a $\Delta$-assignment that is
not identical everywhere, then $G$ is $L$-swappable.
\end{mainthm}

Note that the interesting case in our Main Theorem is when $G$ is regular, since
otherwise the result is included in Corollary~\ref{degen-cor}.
A crucial step in proving the Main Theorem is verifying the following Key Lemma.
Its proof mirrors that of Lemma~\ref{degen-lem}.

\begin{keylem}
If $H$ is degree-swappable and $G$ is a connected graph containing $H$ as an
induced subgraph, then $G$ is degree-swappable.
\end{keylem}

In view of the Key Lemma, we have a natural plan to prove the Main Theorem. (i)
Compile a collection $\HH$ of known degree-swappable graphs. (ii) Show that if
$G$ is connected with $\Delta\ge 3$ and $G\notin\{K_2\dbox K_3,
K_{\Delta+1}\}$, then $G$ contains as an induced subgraph some $H\in \HH$.  
(For brevity, we omit from this sketch the details of handling $K_2\dbox K_3$
and $K_{\Delta+1}$, but they are not hard.) In fact, \erdos, Rubin, and
Taylor~\cite{ERT} used a similar approach to characterize all degree-choosable
graphs.  They showed that if $G$ is connected and not a Gallai tree, then $G$
contains as an induced subgraph an even length cycle with at most one
chord\footnote{The general case of this result easily reduces to the case when
$G$ is 2-connected, which is known as Rubin's Block Lemma.
For a shorter proof, see Section~9 of~\cite{brooks-survey}.},
which we call a \Emph{good cycle}.  It is easy to check that every good cycle is
degree-choosable.  A \Emph{Gallai tree} is a connected graph in which each block
is an odd cycle or a clique.  These results on good cycles imply that every
connected graph is degree-choosable unless it is a Gallai tree; this implication
uses an analogue of our Key Lemma that holds for degree-choosability.\footnote{To
prove this analogue, we greedily color the vertices of $G\setminus H$ in order of
non-increasing distance from $H$.  Afterward, we can extend this coloring to $H$
precisely because $H$ is degree-choosable.}  So perhaps we might hope to even
characterize
degree-swappable graphs.  But when we take this approach, we quickly find many
degree-choosable graphs that are not degree-swappable.

\begin{example}
\label{ex1}
Denote the vertices of an $n$-cycle $C_n$ by $v_1,\ldots,v_n$.  Let
$L(v_i)=\{i,i+1\} \bmod{n}$.  See the left side of Figure~\ref{non-swappable-fig}.
Note that $C_n$ has two $L$-colorings:
(1) $\vph(v_i)=i$, for all $v_i$, and (2) $\vph(v_i)=i+1$, for all $v_i$.  However,
$|L(v_i)\cap L(v_j)|\le 1$ for all distinct $i,j$.  Thus, neither $\vph_1$ nor
$\vph_2$ admits any $L$-valid Kempe swap.  So $C_n$ is not degree-swappable.

\begin{figure}[!h]
\centering
\begin{tikzpicture}[scale=.9]
\clip (-1.0,-12.0) rectangle (12.0,-4.4);
\tikzstyle{uStyle}=[shape = circle, minimum size = 4.5pt, inner sep = 0pt,
outer sep = 0pt, draw, fill=white]
\tikzset{every node/.style=uStyle}

\def\myspace{.58}
\renewcommand{\ULthickness}{1.4pt}%

\begin{scope}[scale=.64, xscale=.9, rotate=-90]
\begin{scope}[xshift=3.4in]
\draw [thick]
(0,0) node (A) {} --
(0,2) node (B) {} --
(0,4) node (C) {} --
(2,4) node (D) {} --
(2,2) node (E) {} --
(2,0) node (F) {} --
(0,0);
\foreach \where/\xshift/\lab in 
{A/-1/{1}~2,B/-1/{2}~3,C/-1/{3}~4,
D/1/{4}~5,E/1/{5}~6,F/1/{6}~1}
\draw (\where) ++ (\myspace*\xshift,0) node[draw=none,shape=rectangle] {{\lab}};

\draw [thick]
(0,0) node (A) {} --
(0,2) node (B) {} --
(0,4) node (C) {} --
(2,4) node (D) {} --
(2,2) node (E) {} --
(2,0) node (F) {} --
(0,0);
\foreach \where/\xshift/\lab in 
{A/-1/\uline{1}~2,B/-1/\uline{2}~3,C/-1/\uline{3}~4,
D/1/\uline{4}~5,E/1/\uline{5}~6,F/1/\uline{6}~1}
\draw (\where) ++ (\myspace*\xshift,0) node[draw=none,shape=rectangle] {{\lab}};

\begin{scope}[xshift=2.35in]
\draw [thick]
(0,0) node (A) {} --
(0,2) node (B) {} --
(0,4) node (C) {} --
(2,4) node (D) {} --
(2,2) node (E) {} --
(2,0) node (F) {} --
(0,0);
\foreach \where/\xshift/\lab in 
{A/-1/1~\uline{2},B/-1/2~\uline{3},C/-1/3~\uline{4},
D/1/4~\uline{5},E/1/5~\uline{6},F/1/6~\uline{1}}
\draw (\where) ++ (\myspace*\xshift,0) node[draw=none,shape=rectangle] {{\lab}};
\end{scope}

\end{scope}
\end{scope}
\begin{scope}[scale=.655, thick, xshift=3in, yshift=-3.30in]
\tikzstyle{uStyle}=[shape = circle, minimum size = 4.5pt, inner sep = 0pt,
outer sep = 0pt, draw, fill=white]
\tikzset{every node/.style=uStyle}

\draw (5,4) node[white] {};

\draw (0,0) node (A) {} 
--++ (2,0) node (B) {} 
--++ (2,0) node (C) {} 
--++ (2.6,0) node (D) {} 
--++ (2,0) node (E) {} 
--++ (0,-2) node (F) {} 
--++ (-2,0) node (G) {} 
-- (D) -- (F) (G) -- (E)
(C) --++ (240:2cm) node (H) {}
--++ (2,0) node (I) {} -- (C)
(B) --++(240:2cm) node (J) {}
-- (A);
\foreach \where/\yshift/\lab in 
{A/1/{\uline{0}\,1\,2},B/1/{\uline{1}\,2\,3},C/1/{\uline{3}\,4\,5\,6},
D/1/{\uline{6}\,7\,8\,9},E/1/{~~\uline{7}\,8\,9}
,F/-1/{~~7\,\uline{8}\,9\,0}
,G/-1/{7\,8\,\uline{9}}
,H/-1/{4\,\uline{5}}
,I/-1/{\uline{4}\,5}
,J/-1/{1\,\uline{2}}
}
\draw (\where) ++ (0,\myspace*\yshift) node[draw=none,shape=rectangle] {{\lab}};

\draw (F) edge[out = 358, in = 240, looseness = 2.3] (A);

\begin{scope}[yshift=-2.3in]

\draw (0,0) node (A) {} 
--++ (2,0) node (B) {} 
--++ (2,0) node (C) {} 
--++ (2.6,0) node (D) {} 
--++ (2,0) node (E) {} 
--++ (0,-2) node (F) {} 
--++ (-2,0) node (G) {} 
-- (D) -- (F) (G) -- (E)
(C) --++ (240:2cm) node (H) {}
--++ (2,0) node (I) {} -- (C)
(B) --++(240:2cm) node (J) {}
-- (A);
\foreach \where/\yshift/\lab in 
{A/1/{0\,\uline{1}\,2},B/1/{1\,2\,\uline{3}},C/1/{3\,4\,5\,\uline{6}},
D/1/{6\,\uline{7}\,8\,9},E/1/{~~7\,8\,\uline{9}}
,F/-1/{~~7\,8\,9\,\uline{0}}
,G/-1/7~\uline{8}~9
,H/-1/4~\uline{5}
,I/-1/\uline{4}~5
,J/-1/1~\uline{2}
}
\draw (\where) ++ (0,\myspace*\yshift) node[draw=none,shape=rectangle] {{\lab}};

\draw (F) edge[out = 358, in = 240, looseness = 2.3] (A);
\end{scope}
\end{scope}
\end{tikzpicture}
\caption{Left: A 6-cycle and a 2-assignment showing that it is not
degree-swappable.  Right: Another ``Gallai tree plus edge'' and a degree-assignment showing
that it is not degree-swappable.\label{non-swappable-fig}}
\end{figure}
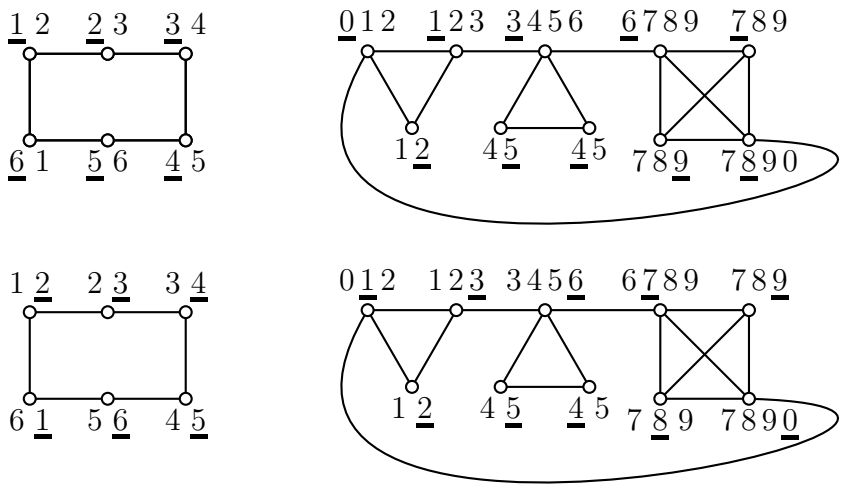

More generally, let $G$ be any Gallai tree.  For each block $B_i$ of $G$, let
$d_i:=d_{B_i}(v)$ for all $v\in V(B_i)$.  Assign to each block $B_i$ a list
$L_i$ of size $d_i$ such that $L_{i_1}\cap L_{i_2}=\emptyset$ whenever $i_1\ne
i_2$.  Let $L(v):=\cup_{B_i\ni v}L_i$.  Now $G$ is not $L$-choosable, as is easy
to verify by induction on its number of blocks.  (This is the standard
construction showing that Gallai trees are not degree-choosable.)  Form $G'$
from $G$ by adding some edge $xy$ such that $x,y\in V(G)$ but $xy\notin E(G)$, $x$ and $y$ are in distinct blocks, and $G'$ is not a Gallai tree.
Let $L'(x):=L(x)\cup\{\alpha\}$, $L'(y):=L(y)\cup\{\alpha\}$, and $L'(z):=L(z)$
for all $z\in V(G)\setminus\{x,y\}$, and some $\alpha\notin \bigcup_{v\in
V(G)}L(v)$.  
See the right side of Figure~\ref{non-swappable-fig}.
Note that $G'$ is $L'$-choosable (by the result of \erdos, Rubin, and
Taylor~\cite{ERT}, since $G'$ is not a Gallai tree).  
Furthermore, $G'$ has some
$L'$-coloring $\vph_1$ with $\vph_1(x)=\alpha$ and has
some other $L'$-coloring
$\vph_2$ with $\vph_2(y)=\alpha$.  Moreover, since $G$ is not $L$-choosable,
every $L'$-coloring of $G'$ uses $\alpha$ on either $x$ or $y$.  Note that
$L'(x)\cap L'(y)=\{\alpha\}$, so no $L$-valid Kempe swap can move $\alpha$ from
$x$ to $y$, or vice versa.  Thus, $\vph_1$ and $\vph_2$ are not $L$-equivalent.
This implies that $G'$ is not degree-swappable, even though, as noted above, $G'$ is degree-choosable.
\end{example}

Note that Example~\ref{ex1} includes an even cycle with a single chord
$e$ whenever the two cycles containing $e$ each have odd length (deleting any edge besides the chord gives a Gallai tree).
Thus, many
good cycles are not degree-swappable.  In fact, we have discovered further
graphs that are not degree-swappable, and we do not yet have a conjectured
description of all such graphs.  So we suggest the following problem.

\begin{prob}
Characterize all degree-swappable graphs.
\end{prob}

To prove the Main Theorem, we split into two cases: (i) connectivity at most 3 and
(ii) connectivity at least 4.  In the first case, which takes most of the work, we use
a small vertex cut to show that $G$ contains an induced subgraph $H$ from a 
family of known degree-swappable graphs.  In the second case, 
the higher connectivity allows us to more explicitly construct a sequence of
$L$-valid Kempe swaps to transform any $L$-coloring $\vph_1$ into any other
$\vph_2$.
The rest of the paper is organized as follows.
In Section~\ref{swappability-lem-sec} we prove the Key Lemma, as well as a few
other helpful results on swappability.
In Section~\ref{degree-swap-sec} we compile a family of known degree-swappable
graphs.  Finally, in Section~\ref{main-proof-sec} we prove the Main Theorem.

\section{Swappability Lemmas}
\label{swappability-lem-sec} 
In this section, we prove a number of lemmas about swappability.  More
precisely, each lemma considers a graph $G$ and a list assignment $L$ and
identifies a set of $L$-colorings of $G$ that are pairwise $L$-equivalent. Each of these
results can be viewed as extending Lemma~\ref{degen-lem}.

\begin{lem}
(a) If $|L(v)|\ge d(v)$ for all $v$ and $|L(w)|>d(w)$ for some
$w$, then $G$ is $L$-swappable (assuming that $G$ is connected).  
(b) If $x\in V(G)$ and $G-x$ is connected, then the same result holds even if we 
only require $|L(x)|\ge 1$.
\label{extra-lem}
\end{lem}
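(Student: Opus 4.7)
The plan is to deduce both parts from Corollary~\ref{degen-cor}, which only asks for a vertex order in which each $v$ is preceded by fewer than $|L(v)|$ neighbors.  The right choice of ordering is driven by where the list-size slack lives: surplus at $w$ in (a), deficit at $x$ in (b).

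For (a), I will order $V(G)$ by non-increasing distance from $w$, so that $w$ is last.  Every $v\ne w$ then has a neighbor on a shortest path from $v$ to $w$ that is strictly closer to $w$ and hence later in the order, so $v$ is preceded by at most $d(v)-1<|L(v)|$ neighbors; the vertex $w$ itself is preceded by its $d(w)<|L(w)|$ neighbors.  Corollary~\ref{degen-cor} then yields $L$-swappability.

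For (b), I will first observe that if $|L(x)|\ge d(x)+1$ then part (a) already applies with $w:=x$, so I may assume $|L(x)|<d(x)$ and in particular that the vertex $w$ supplying $|L(w)|>d(w)$ satisfies $w\ne x$.  I will then put $x$ first and list the vertices of $V(G-x)$ in non-increasing distance from $w$, measured inside $G-x$ (which is connected by hypothesis), with $w$ last.  Routine checks give that $x$ is preceded by $0<|L(x)|$ neighbors; each $v\in V(G-x)\setminus\{w\}$ has a later neighbor within $G-x$ along a shortest path to $w$, giving at most $d_{G-x}(v)-1$ earlier neighbors inside $G-x$, plus at most one more coming from the possible edge $xv$, for a total of at most $d_G(v)-1<|L(v)|$; and $w$ is preceded by all of its $d_G(w)<|L(w)|$ neighbors.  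So Corollary~\ref{degen-cor} applies again.

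I do not anticipate any real obstacle.  The only genuine subtlety is the initial reduction in (b), which ensures a valid root $w\ne x$ is available to serve as the base of the reverse BFS inside $G-x$; the connectedness hypothesis on $G-x$ is exactly what is needed for that BFS-ordering to work.
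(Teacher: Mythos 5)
Your proposal is correct and follows essentially the same route as the paper: both parts are deduced from the ordering criterion of Corollary~\ref{degen-cor}, ordering by non-increasing distance from $w$ for (a) and, for (b), placing $x$ first and ordering $G-x$ toward $w$. You merely make explicit what the paper leaves implicit (the predecessor counts and the reduction guaranteeing a surplus vertex $w\ne x$); note only that in the second case of (b) the correct assumption is $|L(x)|\le d(x)$ rather than $|L(x)|<d(x)$, which changes nothing since all you use is $w\ne x$.
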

\begin{proof}
The first statement follows directly from Corollary~\ref{degen-cor}.  We order
the vertices by non-increasing distance from $w$; thus, every vertex other than
$w$ has a neighbor later in the order.  For the second statement, we simply
put $x$ first in the order, and apply the previous result to $G-x$, which is
still connected.
\end{proof}

Fix a graph $G$, a vertex $v\in V(G)$, a list assignment $L$ for $G$, and a
color $\alpha\in L(v)$.  
Let $\LL$ denote the set of all $L$-colorings of $G$.
Let \emph{$\LL_{v,\alpha}$}\aside{$\LL$, $\LL_{v,\alpha}$} denote the set of
$L$-colorings $\vph$ such that $\vph(v)=\alpha$.  
If $\LL'$ is a set of $L$-colorings of $G$ that are pairwise $L$-equivalent, then 
$\LL'$ \Emph{mixes}.  If also $\vph$ is an $L$-coloring that is $L$-equivalent
to some $\vph'\in \LL'$, then we say that $\vph$ \EmphE{mixes with
$\LL'$}{-5mm}; often it is the case that $\vph\notin\LL'$.

\begin{lem}
\label{missing-lem}
Let $G$ be a connected graph such that $w,x\in V(G)$, $wx\in E(G)$, and $G-x$ is
connected.  If $L$ is a degree-assignment for $G$ such that there exists $\alpha\in
L(x)\setminus L(w)$, then $\LL_{x,\alpha}$ is nonempty and mixes.  More
generally, $\cup_{\alpha\in L(x)\setminus L(w)}\LL_{x,\alpha}$ mixes.
\end{lem}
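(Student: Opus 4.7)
The plan is to reduce the mixing question on $G$ to one on $G-x$ via Lemma~\ref{extra-lem}(a), using a modified list assignment that removes $\alpha$ from the lists of $N_G(x)$.  Fix $\alpha\in L(x)\setminus L(w)$ and define $L^*$ on $V(G-x)$ by $L^*(u):=L(u)\setminus\{\alpha\}$ for $u\in N_G(x)$ and $L^*(u):=L(u)$ otherwise.  A direct check gives $|L^*(u)|\ge d_{G-x}(u)$ for every $u\in V(G-x)$, with strict slack at $w$: since $\alpha\notin L(w)$ we have $|L^*(w)|=|L(w)|=d_G(w)=d_{G-x}(w)+1$.  Hence $G-x$ is $L^*$-swappable by Lemma~\ref{extra-lem}(a), so it has an $L^*$-coloring, which extends to $G$ by setting $\vph(x):=\alpha$ (properness at each $u\in N_G(x)$ is automatic because $\alpha\notin L^*(u)$).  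Thus $\LL_{x,\alpha}$ is nonempty.  Moreover, $\LL_{x,\alpha}$ is in bijection with the $L^*$-colorings of $G-x$ via restriction/extension at $x$, and the mixing of $\LL_{x,\alpha}$ will follow once I show this bijection preserves Kempe-equivalence.

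The key point for the lift is the following case analysis on an $L^*$-valid $\alpha',\beta'$-swap on a component $C$ of $G-x$.  If $\alpha\notin\{\alpha',\beta'\}$ then $x$ (currently colored $\alpha$) cannot lie in any $\alpha',\beta'$-component of $G$, and the same swap on $C$ lifts directly to $G$.  If $\alpha\in\{\alpha',\beta'\}$, say $\alpha'=\alpha$, then $L^*$-validity forces $C\cap N_G(x)=\emptyset$: any $u\in C\cap N_G(x)$ would satisfy $\alpha\in L^*(u)$, since $u$ is either currently colored $\alpha$ (hence $\alpha\in L^*(u)$) or becomes colored $\alpha$ after the swap (hence $\alpha\in L^*(u)$ for $L^*$-validity); but $\alpha\notin L^*(u)$ by construction.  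Consequently $C$ is not adjacent to $x$ in $G$, so $C$ remains an $\alpha,\beta'$-component of $G$ and the swap leaves $x$ undisturbed.  Applying this lifting to an $L^*$-valid sequence connecting the restrictions of any two $\vph_1,\vph_2\in\LL_{x,\alpha}$ shows $\vph_1$ and $\vph_2$ are $L$-equivalent.

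For the ``more generally'' statement, given distinct $\alpha_1,\alpha_2\in L(x)\setminus L(w)$, the plan is to bridge $\LL_{x,\alpha_1}$ and $\LL_{x,\alpha_2}$ by a single $L$-valid Kempe swap.  Using the mixing within $\LL_{x,\alpha_1}$ established above, I would reach a coloring $\vph\in\LL_{x,\alpha_1}$ in which no neighbor of $x$ is colored $\alpha_2$; then the $\alpha_1,\alpha_2$-component of $x$ in $\vph$ is $\{x\}$ and the $\alpha_1,\alpha_2$-swap at $x$ is trivially $L$-valid and lands in $\LL_{x,\alpha_2}$.  The main obstacle is producing this $\vph$: under the bijection it corresponds to an $L^*$-coloring of $G-x$ that additionally avoids $\alpha_2$ on $N_G(x)$, and removing both $\alpha_1$ and $\alpha_2$ from the lists of $N_G(x)$ can create a deficit at neighbors of $x$ whose lists contain both colors.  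I expect the slack at $w$ (since $\alpha_1,\alpha_2\notin L(w)$, the list at $w$ retains its full size in $G-x$) to absorb this via a minor variant of the $L^*$-swappability argument; if that reduction is delicate in certain configurations, one can instead use a bridging swap on a larger $\alpha_1,\alpha_2$-component of $x$, noting that any vertex whose list contains both $\alpha_1$ and $\alpha_2$ automatically satisfies $L$-validity after the swap.
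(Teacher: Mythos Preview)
Your argument for a single $\alpha$ is correct and is a legitimate alternative to the paper's: you delete the vertex $x$ and strip $\alpha$ from the lists of $N_G(x)$, whereas the paper deletes only the edge $wx$, sets $L'(x):=L(x)\setminus L(w)$ (and $L'(v):=L(v)$ otherwise), and invokes Lemma~\ref{extra-lem}(b) on $G':=G-wx$.  Both reductions land on the same slack at $w$, and your lifting analysis is sound.

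The gap is in the ``more generally'' statement.  Neither of your two bridging ideas is complete.  For the first, you need an $L$-coloring in $\LL_{x,\alpha_1}$ that avoids $\alpha_2$ on all of $N_G(x)$; but if some $u\in N_G(x)\setminus\{w\}$ has $L(u)=\{\alpha_1,\alpha_2,\ldots\}$ with $|L(u)|=d_G(u)$, then removing both $\alpha_1$ and $\alpha_2$ leaves only $d_{G-x}(u)-1$ colors at $u$, and the slack at $w$ does not in general propagate to $u$ (indeed if $L(u)=\{\alpha_1,\alpha_2\}$ in a $2$-regular example, no such coloring exists).  For the second, an $\alpha_1,\alpha_2$-swap on the full component of $x$ can hit a vertex $u$ with $\vph(u)=\alpha_2$ but $\alpha_1\notin L(u)$, which is not $L$-valid; your parenthetical only covers vertices whose lists contain both colors.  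You have not shown that some $\vph\in\LL_{x,\alpha_1}$ avoids all such obstructions.

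The paper's edge-deletion sidesteps this entirely: since $x$ stays in $G'$ with $L'(x)=L(x)\setminus L(w)$, the $L'$-colorings of $G'$ are exactly $\bigcup_{\alpha\in L(x)\setminus L(w)}\LL_{x,\alpha}$, and Lemma~\ref{extra-lem}(b) gives mixing of the whole union in one stroke.  The lift to $G$ is clean because any $L'$-valid swap that touches $x$ uses two colors in $L(x)\setminus L(w)$, so $w$ is never in the swapped component and the extra edge $wx$ is irrelevant.  To repair your proof, replace ``delete $x$ and strip $\alpha$ from $N_G(x)$'' by ``delete $wx$ and restrict $L(x)$ to $L(x)\setminus L(w)$''; then Lemma~\ref{extra-lem}(b) does both parts at once.
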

\begin{proof}
We let $G':=G-wx$, let $L'(x):=L(x)\setminus L(w)$, and let $L'(v):=L(v)$ for all $v\in
V(G)\setminus\{x\}$.  Now we apply Lemma~\ref{extra-lem}(b) to $G'$ and $L'$.
\end{proof}

\begin{lem}
\label{common-lem}
Let $G$ be a graph with $v,w_1,w_2\in V(G)$ such that $G-\{w_1,w_2\}$ is connected,
$w_1,w_2\in N(v)$, and $w_1w_2\notin E(G)$.  Fix a degree-assignment  $L$ for $G$.
\begin{enumerate}
\item[(1)] If there exists $\alpha\in L(w_1)\cap L(w_2)$, then
$\LL_{w_1,\alpha}\cap \LL_{w_2,\alpha}$ is nonempty and mixes.
\item[(2)] If there exist $\alpha\in L(w_1)\cap L(w_2)$ and $\beta\in L(w_1)\setminus
L(v)$, then
$(\LL_{w_1,\alpha}\cap \LL_{w_2,\alpha})\cup \LL_{w_1,\beta}$ is nonempty and mixes.
\item[(3)] If there exist $\alpha\in L(w_1)\cap L(w_2)$ and $\beta\in L(w_1)\setminus
L(v)$ and also $N(w_1)=N(w_2)$, then
$\bigcup_{\alpha\in L(w_1)\cap L(w_2)}(\LL_{w_1,\alpha}
\cup\LL_{w_2,\alpha})\cup \bigcup_{\beta\in L(w_1)\setminus L(v)}\LL_{w_1,\beta}$ is nonempty and mixes.
\end{enumerate}
\end{lem}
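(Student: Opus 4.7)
My plan follows the order (1), (2), (3), with each part building on the previous.

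For part (1), the approach mirrors Lemma~\ref{missing-lem}: shrink the graph by deleting both $w_1$ and $w_2$ and apply Corollary~\ref{degen-cor}. Let $G''' := G - \{w_1, w_2\}$ and define $L'''(u) := L(u) \setminus \{\alpha\}$ for $u \in N(w_1) \cup N(w_2)$, and $L'''(u) := L(u)$ otherwise. Then $|L'''(u)| \ge d_{G'''}(u)$ for every $u$, with strict inequality at $v$: since $v \in N(w_1) \cap N(w_2)$, the vertex $v$ loses two degrees passing to $G'''$ but only a single list-entry, so $|L'''(v)| \ge d(v) - 1 > d(v) - 2 = d_{G'''}(v)$. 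Ordering vertices by non-increasing distance from $v$ in $G'''$ satisfies Corollary~\ref{degen-cor}, so $G'''$ is $L'''$-swappable. Each $L'''$-coloring of $G'''$ extends uniquely via $w_1, w_2 \mapsto \alpha$ to a member of $\LL_{w_1,\alpha} \cap \LL_{w_2,\alpha}$, giving nonemptiness. Crucially, each $L'''$-valid Kempe swap in $G'''$ lifts to an $L$-valid swap in $G$ that fixes $w_1, w_2$, because $\alpha \notin L'''(u)$ for $u \in N(w_1) \cup N(w_2)$ prevents any neighbour of $w_1$ or $w_2$ from being recolored to $\alpha$, so the swap's component cannot propagate to $w_1$ or $w_2$.

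For part (2), combine (1) with Lemma~\ref{missing-lem} applied to the edge $vw_1$ and color $\beta \in L(w_1) \setminus L(v)$: that lemma gives $\LL_{w_1,\beta}$ is nonempty and mixes as part of the larger mixing set $\bigcup_{\beta' \in L(w_1)\setminus L(v)} \LL_{w_1,\beta'}$. It then suffices to exhibit one $\vph \in \LL_{w_1,\alpha} \cap \LL_{w_2,\alpha}$ and $\psi \in \LL_{w_1,\beta}$ that are $L$-equivalent. My plan is to choose $\vph$ so that the $\alpha,\beta$-component of $w_1$ in $\vph$ is $\{w_1\}$, so that a single $\alpha,\beta$-swap at $w_1$ simply recolors $w_1$ to $\beta$ and produces $\psi$; note that $v$ is automatic since $\beta \notin L(v)$. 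Such a $\vph$ is produced by strengthening the $L'''$-reduction of (1) with an additional restriction $\vph(u) \ne \beta$ for $u \in N(w_1) \setminus \{v\}$. The delicate point, which I expect to be the main obstacle, is the degree--list bookkeeping under this stronger restriction: for $u \in N(w_1) \cap N(w_2)$ the extra exclusion is absorbed by the double degree-loss in $G'''$; for $u \in N(w_1) \setminus N(w_2)$ with $\alpha \in L(u)$ the swap remains $L$-valid at $u$ even if $\vph(u) = \beta$ (so no restriction on such $u$ is needed); but a residual case -- $u \in N(w_1) \setminus N(w_2)$ with $\alpha \notin L(u)$ and $\beta \in L(u)$ -- might create a one-unit deficit. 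I plan to handle this either by a finer ordering argument leveraging the surplus at $v$, or by first applying a preliminary $L$-valid swap within $\LL_{w_1,\alpha}\cap\LL_{w_2,\alpha}$ (available by (1)) to eliminate the troublesome vertex.

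For part (3), I use (1), (2), and the symmetry $N(w_1) = N(w_2)$. By (1), for every shared $\alpha' \in L(w_1)\cap L(w_2)$ the set $\AA_{\alpha'} := \LL_{w_1,\alpha'} \cap \LL_{w_2,\alpha'}$ mixes. Any $\vph \in \LL_{w_1,\alpha'}$ with $\vph(w_2) = c \ne \alpha'$ can be moved into $\AA_{\alpha'}$ by a single $L$-valid Kempe swap: since $\vph(w_1) = \alpha'$ and $N(w_1) = N(w_2)$, properness forces every neighbor of $w_2$ to have color $\ne \alpha'$, so the $c,\alpha'$-component of $w_2$ is the singleton $\{w_2\}$, and recoloring $w_2$ to $\alpha'$ is $L$-valid ($\alpha' \in L(w_2)$). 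Thus $\LL_{w_1,\alpha'}$ mixes with $\AA_{\alpha'}$, and by the symmetric argument so does $\LL_{w_2,\alpha'}$, giving mixing of $\LL_{w_1,\alpha'} \cup \LL_{w_2,\alpha'}$ for each shared $\alpha'$. Lemma~\ref{missing-lem} merges all $\LL_{w_1,\beta'}$ for $\beta' \in L(w_1)\setminus L(v)$ into a single mixing class, and (2) -- applied with each shared $\alpha'$ and the fixed $\beta$ -- links this class to each $\AA_{\alpha'}$, hence to each $\LL_{w_1,\alpha'} \cup \LL_{w_2,\alpha'}$. Assembling these pieces yields the full mixing statement of (3).
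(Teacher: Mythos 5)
Your parts (1) and (3) track the paper's proof closely: (1) is the same reduction (delete $w_1,w_2$, strip $\alpha$ from the lists of their neighbors, use the surplus at $v$), and (3) is the same observation that $N(w_1)=N(w_2)$ lets you recolor $w_2$ onto the shared color by a singleton swap and then appeal to (2). The genuine gap is in part (2), whose central step you leave unresolved: you need some $\vph\in\LL_{w_1,\alpha}\cap\LL_{w_2,\alpha}$ from which an $L$-valid $\alpha,\beta$-swap at $w_1$ reaches $\LL_{w_1,\beta}$, i.e.\ essentially a coloring in which no vertex of $N(w_1)\setminus\{v\}$ gets $\beta$. Your bookkeeping for the greedy construction does not give this, and the case split is off. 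The truly problematic vertices are $u\in N(w_1)\setminus(N(w_2)\cup\{v\})$ with $\{\alpha,\beta\}\subseteq L(u)$: there the forbidden set ($\alpha$ from $w_1$, $\beta$ by fiat, plus up to $d(u)-2$ colors of earlier-colored neighbors) can exhaust $L(u)$. Your claim that for such $u$ ``the swap remains $L$-valid at $u$ even if $\vph(u)=\beta$'' is not sound: if $\vph(u)=\beta$, the $\alpha,\beta$-component of $w_1$ does not stop at $u$ but continues into vertices whose lists you have not controlled, so the swap can fail to be $L$-valid. (Conversely, your ``residual case'' $\alpha\notin L(u)$, $\beta\in L(u)$ is harmless: $\alpha$ on $w_1$ costs no slot of $L(u)$, so avoiding $\beta$ leaves at most $d(u)-1$ forbidden colors.) Since the needed lists can be one short of degree on $N(w_1)\setminus N(w_2)$, no degeneracy-type ordering will rescue the construction, and neither of your two fallback suggestions is worked out; so the bridge between the two mixing sets is not established. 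A similar caution applies to your lifting remark in (1): preventing neighbors of $w_1,w_2$ from being recolored \emph{to} $\alpha$ does not by itself prevent the $\gamma,\alpha$-component in $G$ from absorbing $w_1$ through a neighbor already colored $\gamma$; that point needs more care than your one-line justification (the paper is admittedly terse here as well).

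The paper builds the bridge differently, avoiding any need for a special coloring. It first handles $\alpha=\beta$ by Lemma~\ref{missing-lem}. Otherwise it sets $G':=G-vw_1-w_2$, $L'(w_1):=\{\alpha,\beta\}$, $L'(z):=L(z)\setminus\{\alpha\}$ for $z\in N(w_2)$, and $L'(z):=L(z)$ elsewhere. Since $G'-w_1=G-\{w_1,w_2\}$ is connected, $v$ retains a surplus (it loses two incident edges but only the color $\alpha$), and only $w_1$ has a short list, Lemma~\ref{extra-lem} applies, so this reduced instance mixes; its colorings correspond exactly to the $L$-colorings in $\LL_{w_2,\alpha}\cap(\LL_{w_1,\alpha}\cup\LL_{w_1,\beta})$ (deleting $vw_1$ is harmless because $v$ never receives $\alpha$ and $\beta\notin L(v)$). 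This set contains colorings with $\vph(w_1)=\alpha$ and colorings with $\vph(w_1)=\beta$ (color $w_1$ and $w_2$, then extend greedily toward $v$), so it links the set from (1) to $\LL_{w_1,\beta}$, which mixes by Lemma~\ref{missing-lem}. If you want to salvage your route, you would have to prove outright that a coloring admitting your single swap exists; the cleaner path is the paper's reduction, which you should adopt or reprove.
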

\begin{proof}
To prove (1), let $G':=G-\{w_1,w_2\}$, let $L'(y):=L(y)\setminus\{\alpha\}$ for all $y\in
N(w_1)\cup N(w_2)$, and let $L'(z):=L(z)$ for all other $z\in V(G)$.  
Note that $|L'(v)|\ge |L(v)|-1>d_G(v)-2=d_{G'}(v)$.  
Thus, we can apply Lemma~\ref{extra-lem}(a) to $G'$ and $L'$.  

Now we prove (2). If $\alpha=\beta$, this holds by Lemma~\ref{missing-lem} 
(with $x:=w_1$ and $w:=v$). 
So assume $\alpha\neq\beta$. Let $G':=G-vw_1-w_2$ and let
$L'(w_1):=\{\alpha,\beta\}$ and $L'(z):=L(z)\setminus\{\alpha\}$ for all $z\in
N(w_2)$ and $L'(z):=L(z)$ otherwise.
Now
$L'$ mixes for $G'$, by Lemma~\ref{extra-lem}(a), with $w:=v$. 
These $L'$-colorings of $G'$ are in bijection with colorings of $G$ in
$\LL_{w_2,\alpha}\cap(\LL_{w_1,\alpha}\cup\LL_{w_1,\beta})$, and each $L'$-valid
Kempe swap in $G'$ maps to an $L$-valid Kempe swap in $G$ that respects this bijection.
Since
$\LL_{w_1,\beta}$ mixes by Lemma~\ref{missing-lem} and
$\LL_{w_1,\alpha}\cap\LL_{w_2,\alpha}$ mixes by (1),  the result follows. (Note
that $L_{w_1,\beta}\cap L_{w_2,\alpha}\ne\emptyset$, since we can color $w_1$
with $\beta$ and color $w_2$ with $\alpha$, and then color $G-\{w_1,w_2\}$
greedily towards $v$.)

Finally, we prove (3).  Consider $\vph\in \LL_{w_1,\alpha}\cup\LL_{w_2,\alpha}$.
If $\vph(w_1)\ne\vph(w_2)$, then we simply recolor $w_1$ or $w_2$ so that they
both use color $\alpha$; this is possible because $N(w_1)=N(w_2)$, so $\alpha$
is unused on $N(w_1)$ (and on $N(w_2)$).  Thus, $\vph$ mixes with
$(\LL_{w_1,\alpha}\cap \LL_{w_2,\alpha})\cup \LL_{w_1,\beta}$ by (2).  Finally, if
there exist distinct 
$\alpha,\alpha'
\in L(w_1)\cap L(w_2)$, 
then we repeat
the argument above with $\alpha'$ in place of $\alpha$.  Similarly, if there
exist distinct 
$\beta,\beta'
\in L(w_1)\setminus L(v)$, 
then we repeat the argument
above with $\beta'$ in place of $\beta$.  This proves (3).
\end{proof}

We will often want to prove that a graph $G$ is $L$-swappable, for some list assignment $L$.
When we want to prove that a graph is degree-swappable, the following lemma significantly 
restricts the possibilities for $L$ that we must consider.

\begin{lem}
Fix a graph $G$, a degree-assignment $L$, and an edge $vw$ such that $G-vw$ is
connected and degree-choosable.  If $|L(v)\cap L(w)|\le 1$, then $G$ is
$L$-swappable.
\label{overlap-lem}
\end{lem}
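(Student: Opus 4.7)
The plan is to work with $G':=G-vw$. By hypothesis $G'$ is connected and degree-choosable, and the restriction of $L$ to $G'$ has $|L(u)|\ge d_{G'}(u)$ for all $u$ with strict inequality at both $v$ and $w$, so Lemma~\ref{extra-lem}(a) already yields that $G'$ is $L$-swappable. The task is then to lift mixing in $G'$ back to mixing in $G$. I split on $|L(v)\cap L(w)|$.

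First suppose $L(v)\cap L(w)=\emptyset$. Then every $L$-coloring of $G'$ automatically uses different colors at $v$ and $w$, so the $L$-colorings of $G$ and of $G'$ coincide. I will argue further that the $L$-valid Kempe swaps in the two graphs are the same operations: for a swap at some triple $(\alpha,\beta,u)$, the $\alpha,\beta$-component containing $u$ in $G$ and in $G'$ can differ only when $\vph(v),\vph(w)\in\{\alpha,\beta\}$ lie in distinct $G'$-components linked by the edge $vw$; in that case performing the $G'$-swap on $v$'s component would give $v$ the color $\vph(w)\in L(w)$, which to be $L$-valid must lie in $L(v)\cap L(w)=\emptyset$. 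So no such $G'$-swap is $L$-valid, and $L$-swappability transfers verbatim from $G'$ to $G$.

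Next suppose $L(v)\cap L(w)=\{\alpha\}$. Let $A:=\{\vph\in\LL:\vph(v)\ne\alpha\}$ and $B:=\{\vph\in\LL:\vph(w)\ne\alpha\}$; since no $L$-coloring of $G$ can have $\vph(v)=\vph(w)=\alpha$, I have $\LL=A\cup B$. Define $L^A$ by deleting $\alpha$ from $L(v)$ only: then $L^A(v)\cap L^A(w)=\emptyset$ and, restricted to $G'$, the lists still satisfy $|L^A(u)|\ge d_{G'}(u)$ everywhere with strict inequality at $w$. Applying the previous paragraph to $L^A$ shows that $G$ is $L^A$-swappable, i.e., $A$ mixes; symmetrically $B$ mixes. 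To tie the two together I delete $\alpha$ from both $L(v)$ and $L(w)$, producing a degree-assignment for $G'$; degree-choosability of $G'$ then supplies a coloring $\vph^*$ which, because the modified lists at $v$ and $w$ are disjoint, is also an $L$-coloring of $G$ lying in $A\cap B$. Then every $\vph\in A$ is $L$-equivalent to $\vph^*$ through $A$, every $\vph\in B$ is $L$-equivalent to $\vph^*$ through $B$, and so all of $\LL=A\cup B$ is pairwise $L$-equivalent. The existence of $\vph^*$ also shows $\LL\ne\emptyset$.

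The main obstacle I anticipate is the correspondence of $L$-valid Kempe swaps between $G$ and $G'$ in the first case: the underlying $\alpha,\beta$-components really can differ, and the hypothesis $|L(v)\cap L(w)|\le 1$ must be used precisely to show that whenever they differ, the $G'$-swap produces a coloring that fails $L$-validity at $v$ or $w$. Once that is settled, the second case plugs into the first through the auxiliary list assignments obtained by deleting $\alpha$, with degree-choosability of $G'$ needed only to supply the bridging coloring in $A\cap B$.
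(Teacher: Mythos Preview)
Your proof is correct and follows the same two-case structure as the paper: reduce to $G'=G-vw$ via Lemma~\ref{extra-lem}(a), use disjointness of the lists at $v,w$ to identify $L$-swappability of $G$ and $G'$ when $|L(v)\cap L(w)|=0$, and in the case $|L(v)\cap L(w)|=1$ delete $\alpha$ from $L(v)$, from $L(w)$, and from both to get two mixing classes bridged by a degree-coloring of $G'$. The only difference is that you spell out why $L$-valid Kempe swaps in $G$ and $G'$ coincide in the disjoint-lists case, which the paper states without justification; your argument there is correct (and the omitted symmetric case with $w$ in the swapped component works identically).
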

\begin{proof}
If $|L(v)\cap L(w)|=0$, then $G$ is $L$-swappable if and only if $G-vw$ is $L$-swappable,
and the latter statement holds by Lemma~\ref{extra-lem}(a).  So assume instead that
$|L(v)\cap L(w)|=1$ and that $L(v)\cap L(w)=\{\alpha\}$.  
Form $L_1$ from $L$ by removing $\alpha$ from $L(v)$. 
Form $L_2$ from $L$ by removing $\alpha$ from $L(w)$. 
Form $L_3$ from $L$ by removing $\alpha$ from both $L(v)$ and $L(w)$. 
Note that $G$ is $L_1$-swappable if and only if $G-vw$ is $L_1$-swappable, and
the latter is true by Lemma~\ref{extra-lem}(a).  The same is true for
$L_2$-swappable.  Since $L_3$ is a degree-assignment for $G-vw$, by assumption
$G-vw$ has an $L_3$-coloring $\vph$.  Note that $\vph$ is both an
$L_1$-coloring and an $L_2$-coloring. Thus, $L_1$-colorings mix with
$L_2$-colorings. Since every $L$-coloring of $G$ is either an $L_1$-coloring or
an $L_2$-coloring (or both), we conclude that $G$ is $L$-swappable.
\end{proof}

The rest of this section is dedicated to proving the Key Lemma (from the
introduction).  In fact, we prove a more general version, 
Lemma~\ref{H-lem}.  
\begin{lem}
Fix a graph $G$ and a function $f:V(G)\to \Z^+$.  Let $H$ be an induced subgraph
of $G$ such that $G-H$ is $f$-swappable.  Let $f'(x):=f(x)-(d_G(x)-d_H(x))$ for
all $x\in V(H)$.  If $f'(x)\ge d_H(x)$ for all $x\in V(H)$ and $H$ is
$f'$-swappable, then $G$ is $f$-swappable.
\label{H-lem}
\end{lem}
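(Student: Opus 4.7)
The plan is to mirror the proof of Lemma~\ref{degen-lem}, with the induced subgraph $H$ playing the role of the single vertex $v$, and the combined hypothesis ``$f'(x) \ge d_H(x)$ for every $x \in V(H)$, and $H$ is $f'$-swappable'' replacing ``$|L(v)| > d(v)$''. Given two $L$-colorings $\vph_1, \vph_2$ of $G$ for an $f$-assignment $L$, I would first restrict them to $G - H$, obtaining $\vph'_1, \vph'_2$. Since $L|_{V(G-H)}$ is an $f|_{V(G-H)}$-assignment of $G - H$ and $G - H$ is $f$-swappable, there is a sequence $\psi'_0 = \vph'_1, \psi'_1, \ldots, \psi'_t = \vph'_2$ of $L|_{V(G-H)}$-colorings of $G-H$ with consecutive ones differing by a single Kempe swap. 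The goal is to lift this to a sequence $\psi_0 = \vph_1, \psi_1, \ldots, \psi_t$ of $L$-colorings of $G$ with $\psi_i|_{V(G-H)} = \psi'_i$ and $\psi_{i-1}$ $L$-equivalent to $\psi_i$, then to repair the coloring on $V(H)$ at the end.

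For each lift step, suppose $\psi'_i$ is obtained from $\psi'_{i-1}$ by an $\alpha,\beta$-swap at some $v_i \in V(G-H)$ on a component $C'$ of the $\alpha,\beta$-subgraph of $G-H$; let $C$ be the $\alpha,\beta$-component of $v_i$ in $\psi_{i-1}$ viewed inside $G$. If $C \cap V(H) = \emptyset$ then $C = C'$ and the swap lifts directly to $G$. Otherwise, I insert a block of preparatory Kempe swaps (each confined to $V(H)$) that replace $\psi_{i-1}|_H$ by an $L_\chi$-equivalent coloring $\sigma$, where $\chi := \psi'_{i-1}$ and $L_\chi(x) := L(x) \setminus \{\chi(y) : y \in N_G(x) \setminus V(H)\}$, chosen so that the $\alpha,\beta$-component of $v_i$ in $G$ (using $\sigma$ on $V(H)$ and $\chi$ on $V(G-H)$) restricts to exactly $C'$ on $V(G-H)$; then I perform the $\alpha,\beta$-swap to obtain $\psi_i$. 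The mechanism is enabled by two facts: first, $|L_\chi(x)| \ge f(x) - (d_G(x) - d_H(x)) = f'(x) \ge d_H(x)$, so after trimming $L_\chi$ to an $f'$-subassignment the $f'$-swappability of $H$ supplies the needed Kempe equivalence among $L_\chi$-colorings of $H$; second, any Kempe swap on $H$ whose two colors lie in $L_\chi$ at every involved vertex has its $G$-component contained entirely in $V(H)$, so it lifts to an $L$-valid Kempe swap on $G$ that keeps $V(G-H)$ fixed. After finishing the lift, one final application of the same mechanism with $\chi := \vph_2|_{V(G-H)}$ converts $\psi_t|_H$ into $\vph_2|_H$, completing the $L$-equivalence of $\vph_1$ and $\vph_2$.

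I expect the main obstacle to be verifying at each lift step that a target $\sigma$ with the required property exists and is $L_\chi$-reachable from $\psi_{i-1}|_H$. Within the family of $L_\chi$-colorings of $H$, one must locate at least one under which the $\alpha,\beta$-component in $G$ does not escape $C'$ back into $V(G-H)$ through $V(H)$; this plays the role of selecting the free color $\gamma \in L(v)$ in Lemma~\ref{degen-lem} and must be extracted from the slack $f'(x) \ge d_H(x)$. A secondary technicality is that passing from $L_\chi$ (which may be strictly larger than $f'(x)$) to a genuine $f'$-assignment before invoking the hypothesis on $H$ requires the sub-assignment to contain both the current color and the target color at each $x \in V(H)$; this is routine when $f'(x) \ge 2$, and the borderline case $f'(x) = 1$ forces $|L_\chi(x)|$ to be just large enough that the current and target colors are already compatible.
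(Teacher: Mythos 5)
Your skeleton is the same as the paper's: lift the swap sequence from $G-H$ to $G$, using Kempe swaps confined to $V(H)$ under the trimmed lists $L_\chi$ (the paper's $L_H$) to prepare the coloring on $H$ before each swap, and repair $H$ at the end via $f'$-swappability. But the step you explicitly defer --- finding, among the $L_\chi$-colorings of $H$, one under which the $\alpha,\beta$-swap at $v_i$ is $L$-valid and its $G$-component does not absorb a second $(\alpha,\beta)$-component of $G-H$ through $H$ --- is precisely the heart of the paper's argument (Definition~\ref{versatile-defn} and Lemma~\ref{versatile-lem}), and it cannot be ``extracted from the slack $f'(x)\ge d_H(x)$'' in the way you suggest. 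Unlike Lemma~\ref{degen-lem}, where $|L(v)|>d(v)$ guarantees a free color $\gamma$, here the inequality may be tight at every vertex of $H$ (e.g.\ every trimmed list could be exactly $\{\alpha,\beta\}$ on an even cycle), so no greedy free-color choice exists. The paper instead proves the existence of such a ``versatile'' extension by exploiting structure: $H$ contains an induced even cycle with at most one chord, the rest of $H$ is colored greedily toward that cycle while controlling, vertex by vertex, the degree in the subgraph induced by colors $\alpha,\beta$, and the cycle itself is handled by a case analysis (a color missing on a neighbor, a common color on all of $C$, or all lists equal to $\{\alpha,\beta\}$). None of this is in your proposal, so as written it is a reduction of the lemma to an unproved existence claim that is in fact the hard part.

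A secondary point: your plan to ``trim $L_\chi$ to an $f'$-subassignment'' before invoking $f'$-swappability is not safe, since trimming can delete the current or the target color at some vertex, and your remark about the case $f'(x)=1$ does not resolve this in general. The paper avoids the issue entirely: it notes $|L_H(x)|\ge f'(x)\ge d_H(x)$ for all $x\in V(H)$, and then either some inequality $|L_H(x)|>f'(x)$ is strict, in which case $H$ is $L_H$-swappable by Corollary~\ref{degen-cor}, or $L_H$ is itself an $f'$-assignment and the hypothesis applies verbatim; in either case the swaps on $H$ lift to $L$-valid swaps of $G$ because every outside neighbor's color has been removed from $L_H$. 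Your requirement that the $\alpha,\beta$-component of $v_i$ in $G$ restrict exactly to $C'$ is the right condition (it matches the ``at most one component'' clause of Lemma~\ref{versatile-lem}); what is missing is the proof that it can always be achieved.
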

\noindent To see that Lemma~\ref{H-lem} generalizes the Key Lemma, let
$f(v):=d_G(v)$ for all $v\in V(G)$, and note that $f'(v)=d_H(v)$ for all $v\in
V(H)$ and $G-H$ is $f$-swappable by Lemma~\ref{extra-lem}(a).  (Each component
of $G-H$ has a vertex $w$ with a neighbor in $H$.)

In this paper we only need the version of the lemma from the introduction (when
$f(v)=d(v)$), but the more general version is no harder to prove, and it is useful
elsewhere~\cite{edge-swappable}.  The proof of Lemma~\ref{H-lem} mirrors that
of Lemma~\ref{degen-lem}.  We start with $L$-colorings $\vph_1'$ and $\vph_2'$
of $G\setminus H$ and a sequence of $L$-colorings $\psi_0',\ldots,\psi_t'$,
showing that $\vph_1'$ and $\vph_2'$ are $L$-equivalent.  And we seek to extend
these $L$-colorings of $G\setminus H$ to $L$-colorings of $G$.  Our main
obstacle is the possibility that at some step colors used on $H$ might interfere
with our desired Kempe swap.  So first we prove, for every $L$-coloring
$\vph'$ of $G\setminus H$ and every $L$-valid Kempe swap for $\vph'$, that some
extension of $\vph'$ to an $L$-coloring of $G$ does not interfere.
This notion of ``non-interference'' motivates Definition~\ref{versatile-defn}
and Lemma~\ref{versatile-lem}.

\begin{defn}
\label{versatile-defn}
For a graph $G$ and a list assignment $L$ for $G$, an $L$-coloring $\vph$ of $G$
is \EmphE{$(\alpha,\beta)$-versatile}{-4mm} \emph{at $w$} if $\vph(w)\in\{\alpha,\beta\}$ and
an $(\alpha,\beta)$-swap at $w$ is $L$-valid for $\vph$.
\end{defn}

\begin{lem}
\label{versatile-lem}
Fix a graph $G$, a connected subgraph $H$, and a list assignment $L$ for $V(G)$.
Let $\vph'$ be an $L$-coloring for $G-H$ that is $(\alpha,\beta)$-versatile at a
vertex $w$.  If $|L(v)|\ge d_G(v)$ for all $v\in V(H)$, and $H$ is not a Gallai
tree, then there exists an $L$-coloring $\vph$ of $G$ that extends $\vph'$ such
that $\vph$ is $(\alpha,\beta)$-versatile at $w$.  Further, there exists such an
$L$-coloring $\vph$ with the property that each $(\alpha,\beta)$-component of
$\vph$ contains the vertex set of at most one $(\alpha,\beta)$-component of $\vph'$.
\end{lem}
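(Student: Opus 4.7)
My plan is to mirror the proof of Lemma~\ref{degen-lem}, now treating the coloring of $V(H)$ as the extension step. Define the restricted list assignment $L_H(v) := L(v) \setminus \{\vph'(u) : u \in N_G(v) \setminus V(H)\}$ for $v \in V(H)$. The hypothesis $|L(v)| \ge d_G(v)$ yields $|L_H(v)| \ge d_H(v)$, so $L_H$ is a degree-assignment on $H$. Since $H$ is connected and not a Gallai tree, the \erdos{}--Rubin--Taylor theorem guarantees that $H$ is $L_H$-choosable. The task therefore reduces to choosing an $L_H$-coloring $\psi_H$ of $H$ for which the extension $\vph := \vph' \cup \psi_H$ is $(\alpha,\beta)$-versatile at $w$ and does not merge two distinct $(\alpha,\beta)$-components of $\vph'$ into one $(\alpha,\beta)$-component of $\vph$.

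The cleanest route is to color $V(H)$ from $L_H$ while entirely avoiding the two colors $\alpha$ and $\beta$. If this succeeds, then the $(\alpha,\beta)$-subgraph of $\vph$ is identical to that of $\vph'$: every $(\alpha,\beta)$-component of $\vph$ is an $(\alpha,\beta)$-component of $\vph'$, and in particular $w$'s component (and its ``both colors in every list'' property) is inherited unchanged. So the first step is to apply \erdos{}--Rubin--Taylor to the reduced list assignment $L^*(v) := L_H(v) \setminus \{\alpha,\beta\}$, which works immediately whenever $|L^*(v)| \ge d_H(v)$ for all $v \in V(H)$.

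The main obstacle is that $L^*$ may fail to be a degree-assignment: when $\{\alpha,\beta\} \subseteq L_H(v)$ and $|L_H(v)| \le d_H(v)+1$, the vertex $v$ is effectively ``forced'' to use $\alpha$ or $\beta$. At each such critical $v$ I would keep exactly one of $\alpha,\beta$ available, producing a genuine degree-assignment, and then coordinate these local choices so that the resulting appearances of $\{\alpha,\beta\}$ on $V(H)$ do not (i) extend the $(\alpha,\beta)$-component of $w$ through a vertex of $V(H)$ missing either $\alpha$ or $\beta$ from its $L$-list, nor (ii) link two distinct $(\alpha,\beta)$-components of $\vph'$ via an $(\alpha,\beta)$-path through $V(H)$. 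The non-Gallai-tree hypothesis is essential for this coordination: a block of $H$ that is neither a clique nor an odd cycle admits multiple degree-colorings, giving the flexibility needed to make the choices compatibly.

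The hardest subcase will be when several critical vertices lie in a common block and their forced uses of $\{\alpha,\beta\}$ interact with the $(\alpha,\beta)$-components of $\vph'$. I expect to handle this by induction on $|V(H)|$ combined with a block-decomposition analysis, peeling off a vertex of $H$ whose removal leaves a smaller non-Gallai-tree graph (or leaves a Gallai tree that is then easily extended with the extra slack available at the peeled vertex).
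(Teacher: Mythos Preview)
Your setup is sound: defining $L_H$, noting it is a degree-assignment on $H$, and observing that an $L_H$-coloring of $H$ avoiding $\{\alpha,\beta\}$ entirely would finish the proof immediately are all correct. The gap is in the remaining case, which is where essentially all of the content lies.

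Writing that you will ``keep exactly one of $\alpha,\beta$ available'' at each critical vertex and then ``coordinate these local choices'' is not a proof; it is a restatement of the objective. Once some $v\in V(H)$ receives color $\alpha$, two conditions must hold simultaneously: (b) if $v$ has an already-colored neighbor using $\beta$ then $\{\alpha,\beta\}\subseteq L(v)$, since otherwise the $(\alpha,\beta)$-swap at $w$ may be invalid at $v$; and (c) $v$ has at most one already-colored neighbor using a color in $\{\alpha,\beta\}$, since otherwise two distinct $(\alpha,\beta)$-components of $\vph'$ merge through $v$. Your proposal does not explain how either is enforced; retaining one of $\alpha,\beta$ in a list does nothing toward (c). The suggested induction by ``peeling off a vertex of $H$ whose removal leaves a smaller non-Gallai-tree graph'' is not viable as written: deleting a vertex from a non-Gallai-tree routinely yields a Gallai tree (remove any vertex of $C_4$), so the inductive hypothesis disappears exactly when you need it, and the parenthetical fallback (``extra slack at the peeled vertex'') does not recover the situation for the remaining Gallai tree.

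The paper's argument avoids this global coordination problem by first locating a good cycle $C\subseteq H$ (an even cycle with at most one chord, guaranteed by Rubin's Block Lemma). It extends $\vph'$ greedily over $H\setminus C$, ordering vertices by nonincreasing distance from $C$: each such vertex $x$ has an uncolored neighbor toward $C$, so after pruning $L(x)$ to enforce (a), (b), (c) above (each colored neighbor causes at most one removal, in an amortized count) at least one color survives. Finally $C$ itself is colored by a short explicit case analysis exploiting its restricted shape. The reduction to a single good cycle is the missing idea; it converts an intractable coordination problem on all of $H$ into a local one on a graph whose structure is completely understood.
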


\begin{proof}
Since $H$ is not a Gallai tree, it contains an induced even cycle, $C$,
with at most one chord.  We first show how to extend $\vph'$ to $G-C$, and then
how to extend it to all of $G$.

A key step is to show that if $|V(H)|\ge 2$ and $x\in V(H)$, then there exists
an $L$-coloring $\vph$ of $G-(H-x)$ that extends $\vph'$ to $x$ and that is
$(\alpha,\beta)$-versatile at $w$.  Suppose that $|V(H)|\ge 2$ and fix $x\in
V(H)$.  When choosing a color for $x$ (for brevity, we denote it by
$\vph'(x)$), to ensure that the resulting extension of $\vph'$ is an
$L$-coloring that is $(\alpha,\beta)$-versatile at $w$, we need to check the
following three properties: (a) $\vph'(x)\ne \vph'(y)$ for all $y\in N(x)\setminus H$,
(b) if $\vph'(x)\in\{\alpha,\beta\}$ and $x$ has a neighbor $y\notin H$ with
$\vph'(y)\in\{\alpha,\beta\}$, then $\{\alpha,\beta\}\subseteq L(x)$, and (c) if
$\vph'(x)\in\{\alpha,\beta\}$, then $x$ 
has at most one neighbor $y\notin H$ with $\vph'(y)\in\{\alpha,\beta\}$.
Now (a)
ensures the extension is a proper $L$-coloring; (b) ensures that an
$(\alpha,\beta)$-swap at $w$ will not create a problem at $x$; and (c)
ensures that each $(\alpha,\beta)$-component of $\vph$ contains at most one
$(\alpha,\beta)$-component of $\vph'$ and that an $(\alpha,\beta)$-swap at $w$
will not create a problem at $y$.

We form a list assignment $L'(x)$ from $L(x)$ by first removing each color that
is used by $\vph'$ on a
neighbor of $x$.  Further, if $\alpha$ is used on a neighbor of $x$ and
$\alpha\notin L(x)$, then we remove $\beta$ from $L(x)$.  Similarly, if $\alpha$
is used on two neighbors of $x$, then we remove $\beta$ from $L(x)$, regardless
of whether or not $\alpha\in L(x)$.  We also 
remove $\alpha$ from $L(x)$ if either of these situations occurs, but with
$\beta$ and $\alpha$ interchanged.  Since $|L(x)|\ge d_G(x)$, we must have
$|L'(x)|\ge d_H(x)\ge 1$, because $H$ is connected and $|V(H)|\ge 2$.  To extend
$\vph'$ to $x$, we simply choose any color in $L'(x)$.  This completes the key
step, started in the previous paragraph.  To extend $\vph'$ to $G-C$, we
repeatedly apply the key step, coloring vertices in order of non-increasing
distance from $C$.  Now we show how to extend $\vph'$ to $C$.  

\textbf{Case 1: $\bm{C}$ has no chord.}
For each $v\in V(C)$, form $L'(v)$ as in
the previous paragraph.  Again, $|L'(v)|\ge 2$ for all $v\in V(C)$.  First
suppose there exists $\gamma$ and $x,y\in V(C)$ such that
$\gamma\notin\{\alpha,\beta\}$ and $xy\in E(C)$ and $\gamma\in L'(x)\setminus
L'(y)$.  Now we color $x$ with $\gamma$ and proceed around $C$ finishing with
$y$.  
Each time that we color a vertex on $C$, we ensure that its degree in the
subgraph induced by vertices colored $\alpha$ and $\beta$ (among all colored
vertices, both inside and outside $C$) is at most 1.  This ensures that 
each $(\alpha,\beta)$-component of $\vph$ contains the vertex set of at most
one $(\alpha,\beta)$-component of $\vph'$.
This process succeeds because each time we color another vertex $z_1$ we reduce
the number of allowable colors on its uncolored neighbor $z_2$ by at most one (even if
we completely repeat the process of removing colors as in the previous
paragraph, treating $z_1$ as though it is outside $H$); this is virtually the
same as extending $\vph'$ to $G-C$.  We can finish at $y$
because the color on $x$ does not restrict our choice \mbox{of color for $y$}.

Suppose instead there exists $\gamma\notin\{\alpha,\beta\}$ such that
$\gamma\in L'(v)$ for all $v\in C$.  Now we use $\gamma$ on one maximum
independent set in $C$ and color each remaining vertex $v$ from
$L'(v)\setminus\{\gamma\}$.  

Finally, suppose that $L'(v)=\{\alpha,\beta\}$ for
each $v\in V(C)$.  Now we alternate $\alpha$ and $\beta$ around $C$.
In each case, it is easy to check that the resulting coloring $\vph$ is
$(\alpha,\beta)$-versatile at $w$. The key observation for this last case is
that no vertex of $C$ has a neighbor in $G-C$ colored $\alpha$ or $\beta$, by
construction of $L'(v)$.

\textbf{Case 2: $\bm{C}$ has a chord.}  Let $x$ denote one endpoint of the chord and
let $y$ and $z$ denote the neighbors of $x$ on $C$, besides the other endpoint
of the chord.  Form $L'(v)$ for each $v\in V(C)$, as above; again $|L'(v)|\ge
d_H(v)$ for all $v\in V(C)$.  Since $|L'(x)|\ge d_H(x)=3$, there exists
$\gamma\in L'(x)\setminus\{\alpha,\beta\}$.  If $\gamma\in L'(y)\cap L'(z)$,
then use $\gamma$ on $y$ and $z$ and color greedily toward $x$ in the remaining
uncolored subgraph.  So assume instead that $\gamma\notin L'(y)\cap L'(z)$; by
symmetry, assume that $\gamma\notin L'(z)$.  Now use $\gamma$ on $x$, then color
the remaining uncolored subgraph greedily in order of non-increasing distance from
$z$.  Again, we can finish at $z$ because using $\gamma$ on $x$ does not
restrict the choice of color for $z$.
As above,
each time that we color a vertex on $C$, we ensure that its degree in the
subgraph induced by vertices colored $\alpha$ and $\beta$ (%
both inside and outside $C$) is at most 1.  Again, this ensures that 
each $(\alpha,\beta)$-component of $\vph$ contains the vertex set of at most
one $(\alpha,\beta)$-component of $\vph'$.
\end{proof}

Now we prove Lemma~\ref{H-lem}.  As mentioned above, the proof mirrors that of
Lemma~\ref{degen-lem}, but now Lemma~\ref{versatile-lem} ensures some extension
to $H$ is versatile for the next Kempe swap in $G-H$.

\begin{proof}[Proof of Lemma~\ref{H-lem}.]
Fix a graph $G$ and a function $f:V(G)\to \Z^+$.  Let $H$ be an induced subgraph
of $G$ such that $G-H$ is $f$-swappable.  Let $f'(x):=f(x)-(d_G(x)-d_H(x))$ for
all $x\in V(H)$.  Assume that $f'(x)\ge d_H(x)$ for all $x\in V(H)$ and that
$H$ is $f'$-swappable.  

Let $L$ be an $f$-assignment for $G$.  Let $G':=G-H$.  By assumption,
each component of $G'$ is $L$-swappable.  So $G'$ is $L$-swappable.  Let
$\vph_0$ and $\vph$ be two $L$-colorings of $G$, and let $\vph_0'$ and $\vph'$
denote their restrictions to $G'$.  Since $G'$ is $L$-swappable, there exists a
sequence $\vph_0', \vph_1', \ldots, \vph_k'=\vph'$ of $L$-colorings of $G'$
such that every two successive $L$-colorings differ by a single $L$-valid Kempe
swap.  By induction on $k$, we extend each $\vph_i'$ to an $L$-coloring
$\vph_i$ of $G$ such that every two successive $L$-colorings in the sequence
$\vph_0,\vph_1,\ldots,\vph_k=\vph$ are $L$-equivalent. The case $k=0$ is easy
because $\vph_0=\vph$, so we are done.

So assume that $k\ge 1$.  
Suppose that $\vph_{i+1}'$ differs from $\vph_i'$ by an $\alpha,\beta$-swap at a
vertex $v_i$.  By Lemma~\ref{versatile-lem}, there exists an $L$-coloring
$\widetilde{\vph_i}$ of $G$, such that the restriction of $\widetilde{\vph_i}$
to $G'$ is $\vph'_i$ and an $\alpha,\beta$-swap at $v_i$ is $L$-valid in 
$\widetilde{\vph_i}$.  Furthermore, the restriction of this new coloring (after
performing the $\alpha,\beta$-swap at $v_i$) is $\vph_{i+1}'$.
It now suffices to show that $\vph_i$ and $\widetilde{\vph_i}$ are
$L$-equivalent. We do this by a sequence of Kempe swaps that recolors $H$ but
never changes the colors on $V(G-H)$.

For each $v\in V(G-H)$, remove $\vph'_{i}(v)$ from $L(w)$ for each $w\in
N(v)\cap V(H)$; denote the resulting list assignment on $H$ by $L_H$.  Note
that $|L_H(x)|\ge f'(x)\ge d_H(x)$ for all $x\in V(H)$.  If $|L_H(x)|>f'(x)$
for some $x$, then $H$ is $L_H$-swappable by Corollary~\ref{degen-cor}, since
$|L_H(y)|\ge f'(y)\ge d_H(y)$ for all $y\in H$.  So assume $|L_H(y)|=f'(y)$ for
all $y\in H$.  Since $H$ is $f'$-swappable, the restrictions of $\vph_i$ and
$\widetilde{\vph_i}$ to $H$ (which are both $L_H$-colorings) are
$L_H$-equivalent.  Consider a sequence of Kempe swaps that witnesses this. 
Note that performing the same Kempe swaps in $G$ transforms $\vph_i$ to
$\widetilde{\vph_i}$ (this is because for each edge $vw$ with $v\in V(H)$ and
$w\notin V(H)$, we have $\vph'_i(w)\notin L_H(v)$).  Now performing an
$\alpha,\beta$-swap at $v_i$ in $\widetilde{\vph_i}$ yields
an $L$-coloring of $G$ that restricts to $\vph'_{i+1}$ on $H$; denote
this $L$-coloring of $G$ by $\vph_{i+1}$.

The previous two paragraphs show that we can use $L$-valid Kempe swaps to
transform $\vph_0$ into an $L$-coloring $\widetilde{\vph}$ that agrees with
$\vph$ on $G-H$.  Finally, we transform $\widetilde{\vph}$ to $\vph$.
This is possible precisely because $H$ is $f'$-swappable.
\end{proof}

\section{Degree-swappable Graphs}
\label{degree-swap-sec} 

In this section we prove that various graphs are degree-swappable.  In view of
Lemma~\ref{H-lem}, if a connected $k$-regular graph $G$ contains an induced copy of any
degree-swappable graph, then $G$ is $k$-swappable.
Our first example of degree-swappable graphs requires a new definition.
A \Emph{theta graph}, $\Theta_{a,b,c}$, consists of two 3-vertices, $x$ and
$y$, that are linked by internally disjoint paths of lengths $a$, $b$, and $c$.

\begin{lem}
If $G$ is a bipartite theta graph, $\Theta_{a,b,c}$, then $G$ is degree-swappable.
\label{bipartite-lem}
\end{lem}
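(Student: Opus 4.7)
Let $G = \Theta_{a,b,c}$ be a bipartite theta graph with branch vertices $x, y$ joined by internally disjoint $(x,y)$-paths $P_1, P_2, P_3$ of lengths $a, b, c$. Bipartiteness forces $a, b, c$ to share parity, and simplicity forces at most one of them to equal $1$. Let $L$ be a degree-assignment for $G$, so $|L(x)|=|L(y)|=3$ and every interior vertex $v$ has $|L(v)|=2$. My plan is to dispose of most list-assignments via Lemma~\ref{overlap-lem} and handle the highly constrained residue by direct argument.

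First I would verify that, for every edge $e\in E(G)$, the graph $G - e$ is connected (because $G$ is $2$-edge-connected) and contains the even cycle formed by the two of $P_1, P_2, P_3$ not containing $e$, which has length at least $4$. Hence $G - e$ has a block that is neither an odd cycle nor a clique, so $G - e$ is not a Gallai tree and is therefore degree-choosable by \erdos, Rubin, and Taylor. Consequently, if some edge $vw$ satisfies $|L(v)\cap L(w)|\le 1$, then $G$ is $L$-swappable by Lemma~\ref{overlap-lem}.

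So assume $|L(v)\cap L(w)|\ge 2$ for every edge $vw$. Since $|L(v)|=2$ at each interior vertex, this forces: (i) all interior vertices on a path $P_i$ of length at least $2$ share a common $2$-element list $M_i$; (ii) $M_i\subseteq L(x)\cap L(y)=:I$ for each such $P_i$; and (iii) $|I|\ge 2$, covering also the case $xy\in E(G)$. I would complete the proof by fixing a canonical $L$-coloring $\vph_0$ and transforming any $L$-coloring $\vph$ into $\vph_0$ using three kinds of $L$-valid Kempe swaps: (a) a swap at $x$ or $y$ using a color in $L(x)\setminus I$ or $L(y)\setminus I$, whose Kempe component is just the branch vertex because no neighbor carries such a color; (b) an $(\alpha,\beta)$-swap with $\{\alpha,\beta\}=M_i$ on the interior of $P_i$, whose Kempe component equals that interior as soon as $\vph(x),\vph(y)\notin\{\alpha,\beta\}$, flipping it to its unique alternate configuration; and (c) a final pair of swaps at $x$ and $y$ installing $\vph_0(x)$ and $\vph_0(y)$.

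The main obstacle is the sub-case $|I|=3$, which forces $L(x)=L(y)=I$ and permits $M_1,M_2,M_3$ to be three distinct $2$-subsets of $I$; then no branch-vertex color lies outside every $M_i$, so step~(a) is unavailable. I would handle this by a finite case analysis on the multiset $\{M_1,M_2,M_3\}$, using the observation that an $(\alpha,\beta)$-swap at $x$ has its Kempe component confined to $\{x\}$, the interiors of those $P_i$ with $M_i=\{\alpha,\beta\}$, and at most the first interior vertex of each $P_i$ whose $M_i$ contains exactly one of $\alpha,\beta$ (further propagation is blocked because the next interior vertex is coloured outside $\{\alpha,\beta\}$). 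This bounded reach lets any unintended flips during branch-vertex swaps be repaired by subsequent step-(b) swaps on the affected paths.
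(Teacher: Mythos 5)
Your reduction via Lemma~\ref{overlap-lem} and your handling of the case $|I|=2$ (equivalently $L(x)\ne L(y)$, which forces every interior list to equal $I$) match the paper's argument: recolor $x$ and $y$ to the colors outside $I$ by swaps with singleton Kempe components, then flip path interiors one at a time. (Your step (c) is only safe if you take $\vph_0$ to be the canonical coloring with those ``outside'' colors on $x$ and $y$, which costs nothing since $L$-equivalence is an equivalence relation.)

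The gap is in the case $L(x)=L(y)=I$ with $|I|=3$, which is where the real content of the lemma lies, and your supporting observation there is wrong. If the Kempe $(\alpha,\beta)$-component of $x$ reaches the first interior vertex of a path $P_j$ whose list $M_j$ contains exactly one of $\alpha,\beta$, then performing the swap would assign that vertex the color of $\{\alpha,\beta\}$ \emph{not} in $M_j$; by definition the swap is then not $L$-valid at all, so there is no ``unintended flip'' available to be ``repaired'' by later interior swaps --- the move simply cannot be made. Bounded reach controls how far the component extends, but it does not make the swap legal, and whether a legal swap at a branch vertex exists from a given coloring is exactly the difficulty. The unexecuted ``finite case analysis on $\{M_1,M_2,M_3\}$'' must also cover the subcase where two paths share the same interior list but $\bigcup_i M_i=I$ (so no safe color at $x$ or $y$ exists), and the subcase of three distinct lists. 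The paper needs two separate nontrivial arguments here: for two equal interior lists it deletes the interior of one such path, applies Lemma~\ref{extra-lem}(a) (since then $d(x)$ drops below $|L(x)|$), and uses the parity of the two paths to lift every swap sequence back to $G$; for three distinct lists it shows each $\LL_{x,\gamma}$ mixes via Lemma~\ref{missing-lem} and then links $\LL_{x,2}$ to $\LL_{x,3}$ by first moving to a coloring in $\LL_{x_a,1}\cap\LL_{x_b,1}$, where the $2,3$-component of $x$ is just a path through $P_c$, so the swap is valid. Your proposal contains neither idea, so as written it does not prove the hard case.
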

\begin{proof}
Fix a degree-assignment $L$ for $G$. Note that $G-vw$ is degree-choosable for
all $vw\in E(G)$. So, by Lemma \ref{overlap-lem}, $|L(v)\cap L(w)|\geq2$ for
all $vw\in E(G)$. Let $x$ and $y$ denote the 3-vertices of $G$, and let $P_a,P_b$,
and $P_c$ denote the internally disjoint $x,y$-paths of lengths $a$, $b$, and $c$, respectively. 
Since $G$ has an $x,y$-path with all internal vertices of degree 2, 
by Lemma~\ref{overlap-lem} we can assume,
by transitivity, that $|L(x)\cap L(y)|\ge 2$.

Suppose
$L(x)\neq L(y)$. Specifically, suppose $L(x)=\{1,2,3\}$ and $L(y)=\{1,2,4\}$. 
This implies that every 2-vertex has the list $\{1,2\}$. 
Fix an arbitrary $L$-coloring $\vph$ of $G$ with $\vph(x)=3$ and $\vph(y)=4$.
Starting from an arbitrary $L$-coloring $\vph'$, we can recolor $x$ with 3 and
recolor $y$ with 4, then reach $\vph$ by using at most one $1,2$-swap on each
path of $G-\{x,y\}$.
Thus, we assume that $L(x)=L(y)=\{1,2,3\}$. 

Suppose two disjoint $x,y$-paths, say $P_a$ and $P_b$, have the same list for their
2-vertices.  Form $G'$ from $G$ by deleting all 2-vertices of $P_a$.
Note that $d_{G'}(x)=2<3=|L(x)|$, so $G'$ is $L$-swappable by
Lemma~\ref{extra-lem}(a) with $w:=x$.  
This implies that $G$ is also $L$-swappable, as follows.
Since $P_a$ and $P_b$ have lengths of the same
parity, given any $L$-coloring of $G$, we can recolor the internal vertices
of $P_a$ so the neighbors of $x$ and $y$ on $P_a$ have the same colors as
their neighbors on $P_b$.  
Thus, any sequence of Kempe swaps in $G'$ extends to a sequence in $G$.
So we assume that no two $x,y$-paths have the same list for their 2-vertices. 

Denote by $x_a$, $x_b$, and $x_c$ the 2-neighbors of $x$, if they exist (when one
$x,y$-path has no 2-vertices, assume it is $P_c$, and let $x_c$ denote $y$).
Assume $L(x_a)=\{1,2\}$, $L(x_b)=\{1,3\}$, and $L(x_c)=\{2,3\}$.  Note that the
sets $\LL_{x,1}, \LL_{x,2}$, and $\LL_{x,3}$ each mix by
Lemma~\ref{missing-lem}.  

Consider $\vph\in\LL_{x_a,1}\cap\LL_{x_b,1}$.  Note that the 2,3-component
containing $x$ is a path (also containing all 2-vertices of $P_c$ and possibly $y$,
depending on the parities of the $x,y$-paths).  So a 2,3-swap at $x$ shows that
$\LL_{x,2}$ mixes with $\LL_{x,3}$.  By symmetry, $\LL_{x,1}$ mixes with
$\LL_{x,2}$.  Thus, $\LL_{x,1}\cup\LL_{x,2}\cup\LL_{x,3}$ mixes; that is, $\LL$
mixes.
\end{proof}

Recall that an even length cycle with at most one chord is a \emph{good cycle}.

\begin{lem}
Let $G$ be a graph that contains as induced subgraphs two good cycles, $H_1$ and
$H_2$.  Assume that $H_1$ and $H_2$ intersect in at most one vertex and that
all but at most one edge induced by $V(H_1)\cup V(H_2)$ either lies in $H_1$ or
lies in $H_2$; further, if there exists such an edge, then $V(H_1)\cap
V(H_2)=\emptyset$.  If $P$ is a shortest path from $H_1$ to $H_2$, then
$G[V(H_1)\cup V(H_2)\cup V(P)]$ is degree-swappable.  Thus, if $G$ is a
connected graph with at least two degree-choosable blocks, then $G$ is
degree-swappable.
\label{good-cycle-lem}
\end{lem}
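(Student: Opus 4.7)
The plan is to prove the first statement, that $G^*:=G[V(H_1)\cup V(H_2)\cup V(P)]$ is degree-swappable, and then deduce the second via the Key Lemma.  I would fix a degree-assignment $L$ for $G^*$ and first trim the lists via Lemma~\ref{overlap-lem}: for every edge $vw$ lying in $H_1$ or in $H_2$ (including their chords, if present), deleting $vw$ leaves a connected graph still containing the other intact good cycle, which therefore is not a Gallai tree and so, by \erdos--Rubin--Taylor, is degree-choosable.  Thus I may assume $|L(v)\cap L(w)|\ge 2$ for every such edge $vw$.

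Next I would let $v_1$ be the shared vertex of $H_1$ and $H_2$ (if any), or the endpoint of $P$ in $H_1$ otherwise; in either case $d_{G^*}(v_1)\ge 3$ and so $|L(v_1)|\ge 3$.  Applying Lemma~\ref{missing-lem} (and Lemma~\ref{common-lem} where appropriate) with $v_1$ and a suitable neighbor in the opposite side of $G^*$, each fiber $\LL_{v_1,\alpha}$ is nonempty and mixes.  To pool these fibers, I would use the even-cycle structure of $H_1$: for distinct $\alpha,\alpha'\in L(v_1)$, there is an $L$-coloring of $G^*$ in which the $\alpha,\alpha'$-component of $v_1$ is a path inside $H_1$, so an $\alpha,\alpha'$-swap at $v_1$ carries $\LL_{v_1,\alpha}$ into $\LL_{v_1,\alpha'}$.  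This parallels the concluding step in the proof of Lemma~\ref{bipartite-lem}.  Iterating, $\LL=\bigcup_\alpha \LL_{v_1,\alpha}$ mixes, so $G^*$ is $L$-swappable.

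The hard part will be the case analysis: each $H_i$ could be a plain even cycle or a theta graph with a chord, and $H_1,H_2$ could share a vertex, be linked by a single extra edge, or be joined by a longer path.  The chorded-cycle subcases require checking that the $\alpha,\alpha'$-swap along $H_1$ does not conflict with the chord (and symmetrically for $H_2$), while the shared-vertex case simplifies because there $|L(v_1)|\ge 4$, giving extra slack in Stage 2.  A subtle point in the longer-path case is that internal edges of $P$ do not fit the hypotheses of Lemma~\ref{overlap-lem} (removing them disconnects $G^*$), so those edges must be handled directly by propagating Kempe swaps along $P$ between the two cycles.

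For the second statement, suppose $G$ has two degree-choosable blocks $B_1,B_2$.  Neither is a Gallai tree, so by \erdos--Rubin--Taylor each $B_i$ contains an induced good cycle $H_i$.  Distinct blocks meet in at most one cut-vertex, so $|V(H_1)\cap V(H_2)|\le 1$, and no edge between $V(H_1)$ and $V(H_2)$ can exist outside that vertex without merging $B_1$ and $B_2$.  Thus the hypotheses of the first part hold for any shortest $H_1$-$H_2$ path $P$, giving that $G[V(H_1)\cup V(H_2)\cup V(P)]$ is degree-swappable; the Key Lemma then lifts degree-swappability to $G$.
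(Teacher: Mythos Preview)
Your overall shape---use Lemma~\ref{overlap-lem} to force $|L(v)\cap L(w)|\ge 2$ along the cycle edges, then partition $\LL$ into fibers and link them by Kempe swaps---matches the paper's.  The derivation of the second statement from the first is also essentially the same.  But the central step of your first-statement argument has a real gap.

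You propose to take $v_1$ (the endpoint of $P$ in $H_1$, or the shared vertex) and show that each $\LL_{v_1,\alpha}$ mixes, citing Lemma~\ref{missing-lem} and Lemma~\ref{common-lem}.  Neither lemma gives this.  Lemma~\ref{missing-lem} shows $\LL_{x,\alpha}$ mixes only when $\alpha\in L(x)\setminus L(w)$ for some neighbor $w$; for a generic $\alpha\in L(v_1)$ there is no reason any neighbor's list omits $\alpha$ (indeed after the overlap argument the lists along $H_1$ are heavily shared).  Lemma~\ref{common-lem} yields mixing of sets like $\LL_{w_1,\alpha}\cap\LL_{w_2,\alpha}$ for non-adjacent $w_1,w_2\in N(v)$, not of $\LL_{v_1,\alpha}$.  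Fixing a color on the high-degree anchor $v_1$ gives no slack anywhere, so the fiber need not mix.  The paper avoids this by partitioning on \emph{other} vertices: in the chordless case it fixes colors on the $2$-valent neighbors $u_1,u_2$ of $u$ (exploiting that $\vph(u_1)=\vph(u_2)$ always, which creates genuine slack at $u$); in the chorded cases it partitions on the chord endpoints $x,y$, where Lemma~\ref{missing-lem} does apply because the chord creates a color in $L(x)\setminus L(y_1)$ for a $2$-neighbor $y_1$.  Which partition works depends delicately on the subcase, and this is where the actual content of the proof lives.

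Two further points you are missing.  First, the paper chooses $H_1$ to be a good cycle \emph{closest} to $H_2$; this is essential for handling the situation where the second vertex $v$ of $P$ has more than one neighbor in $H_1$, since one can then replace $H_1$ by a shorter even cycle through $v$ and reach a contradiction (or reduce to a bipartite theta graph handled by Lemma~\ref{bipartite-lem}).  Without this minimality device your case analysis has no way to bound $|N_{H_1}(v)|$.  Second, several subcases collapse directly to Lemma~\ref{bipartite-lem} (bipartite theta graphs), which does much of the heavy lifting in the chorded cases; your sketch does not invoke it.
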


\begin{figure}[!hb]
\centering
\begin{tikzpicture}[scale=-.4, yscale=1.2]
\tikzstyle{uStyle}=[shape = circle, minimum size = 5.5pt, inner sep = 0pt,
outer sep = 0pt, draw, fill=white]
\tikzset{every node/.style=uStyle}

\begin{scope}[xshift=1.4in]
\draw[thick] (0,0) node {} -- (1,1) node {} -- (2,0) node {} -- (1,-1) node
{} -- (0,0) (2,0) -- (3,1) node {} -- (4,0) node {} -- (3,-1) node {} -- (2,0)
node {} -- (4,0);
\end{scope}

\begin{scope}[xshift=-3in]
\draw[thick] (0,0) node {} -- (1,1) node {} -- (2,0) node {} -- (1,-1) node
{} -- (0,0) (2,0) -- (4,0) node {} -- (5,1) node {} -- (7,1) node {} -- (8,0)
node {} -- (7,-1) node {} -- (5,-1) node {} -- (4,0) node {};
\end{scope}

\begin{scope}[xshift=-7.35in]
\draw[thick] (-1,0) node {} -- (0,1) node {} -- (1,0) node {} -- (0,-1) node
{} -- (-1,0) -- (1,0) -- (2.5,0) node{} -- (4,0) node {} -- (5,1) node {} -- (7,1) node {} -- (8,0)
node {} -- (7,-1) node {} -- (5,-1) node {} -- (4,0) node {} (5,1) -- (5,-1);
\end{scope}

\end{tikzpicture}
\caption{Three examples of $G[V(H_1)\cup V(H_2)\cup V(P)]$ in
Lemma~\ref{good-cycle-lem}.}
\end{figure}
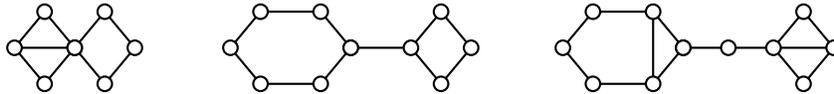

\begin{proof}
To begin, we prove the second statement from the first.
Let $B_1$ and $B_2$ be two degree-choosable blocks of $G$. 
So neither $B_1$ nor $B_2$ is a complete graph or odd cycle.
By Rubin's Block
Lemma~\cite{ERT},
there exist induced even cycles $H_1$ and $H_2$, each with at most one chord, in
$B_1$ and $B_2$, respectively. Let $P$ be a shortest path from $H_1$ to $H_2$. By
the first statement and the Key Lemma, $G$ is degree-swappable. 

Now we prove the first statement. If possible, choose $H_1$ so that it is an
even cycle with at most one chord that is closest to $H_2$. By
Lemma~\ref{H-lem}, it suffices to consider the case that $V(G)=V(H_1)\cup
V(H_2)\cup V(P)$. Let $L$ be a degree-assignment for $G$. Let $u$ be the
endpoint of $P$ in $H_1$.  For each path $Q$, let \Emph{$\ell(Q)$} denote the
length (number of edges) of $Q$.  We often write the \Emph{parity of $Q$} to
mean the parity of $\ell(Q)$.  If
$\ell(P)\neq0$, let $v$ be the neighbor of $u$ on $P$.  Suppose
$N_{H_1}(v)=\{w_1,\dots,w_t\}$ with $t\ge 3$. Let $P_i$ be the $w_i,w_{i+1}$-path
in $H_1$ for every $i\in[t]$. If $P_i$ is even for some $i\in[t]$, then
$G[v\cup V(P_i)]$ is an even cycle closer to $H_2$, contradicting our choice of
$H_1$. So assume $P_i$ is odd for every $i\in[t]$. Since $H_1$ is even,
$t$ must also be even; thus, $t\geq4$. But now $G[v\cup V(P_1)\cup
V(P_2)]$ is an even cycle closer to $H_2$. Thus, when $\ell(P)\neq0$, we assume
$|N_{H_1}(v)|\leq2$. 

Also, since $G-xy$ is degree-choosable for every $xy\in E(H_1)$, we assume
$|L(x)\cap L(y)|\geq2$ by Lemma~\ref{overlap-lem}.
Thus, if $w,z\in V(H_1)$ and some $w,z$-walk consists only of 2-vertices
(including $w$ and $z$), then transitivity implies $L(w)=L(z)$.

\textbf{Case 1: $\bm{\ell(P)=0}$ or $\bm{|N_{H_1}(v)|=1}$.} 

\textbf{Case 1.1: $\bm{H_1}$ has no chord.} Pick $u_1,u_2\in N_{H_1}(u)$ and
let $L(u_1)=\{a,b\}$; see Figure~\ref{no-chord}. As observed above,
$L(u_2)=L(u_1)=L(u_i)$ for all $u_i\in V(H_1)\setminus \{u\}$, by transitivity.
Since $H_1$ is even,
$\vph(u_1)=\vph(u_2)$ for every $\vph\in\LL$. By Corollary~\ref{degen-cor},
$\LL_{u_1,a}$ and $\LL_{u_1,b}$ each mix. 
To see this, order the vertices of $G-H_1$ by non-increasing distance from $u$
(with $u$ last), and recall that always $\vph(u_1)=\vph(u_2)$.
Now we show that $\LL_{u_1,a}\cup\LL_{u_1,b}$ mixes.
Since $H_2$ is degree-choosable and $|L(u)|=d(u)=3$, there exists an
$L$-coloring $\vph'$ with $\vph'(u)\notin\{a,b\}$.
Performing an $a,b$-swap at $u_1$ in $\vph'$ shows that $\LL_{u_1,a}$ mixes with
$\LL_{u_1,b}$.  Thus, $\LL$ mixes.

\textbf{Case 1.2: $\bm{H_1}$ has a chord $\bm{xy}$ with $\bm{u\notin\{x,y\}}$.}
Let $P_1$, $P_2$, and $P_3$ be the $x,y$-path, $x,u$-path, and $y,u$-path on $H_1$
avoiding $u$, $y$, and $x$, respectively; see Figure \ref{chord-xy}. If $\ell(P_1)$ is
odd, then $\ell(P_2\cup P_3)$ is also odd since $H_1$ is even. Thus, $H_1$
induces a bipartite theta graph, and we are done by Lemma~\ref{bipartite-lem}.
So we instead assume $\ell(P_1)$ is even. 

\begin{figure}[b!] 
\begin{center}
\begin{subfigure}{0.3\textwidth}
\centering
\begin{tikzpicture}[every node/.style={scale=0.8}]
\draw[thick] (1,0) -- (0,1) (0,-1) -- (1,0);
\draw[thick] (0,1) edge[bend right=90, looseness=2, decorate, decoration={snake, amplitude=0.4mm}] (0,-1);
\draw[thick] (0,1) node[uStyle] {$u_1$};
\draw[thick] (0,-1) node[uStyle] {$u_2$};
\draw[thick] (1,0) node[uStyle] {$u$};
\end{tikzpicture}
\caption{\label{no-chord}}
\end{subfigure}%
\begin{subfigure}{0.3\textwidth}
\centering
\begin{tikzpicture}[every node/.style={scale=0.8}]
\draw[thick] (-0.5,1) edge[bend left=50, decorate, decoration={snake, amplitude=0.4mm}] (1,0) (-0.5,-1) edge[bend right=50, decorate, decoration={snake, amplitude=0.4mm}] (1,0) (-0.5,1) edge[bend right=90, looseness=1.3, decorate, decoration={snake, amplitude=0.4mm}] (-0.5,-1) (-0.5,1) -- (-0.5,-1);
\draw[thick] (-1,0) node[draw=white, fill=white] {$P_1$};
\draw[thick] (0.2,0.6) node[draw=white, fill=white] {$P_2$};
\draw[thick] (0.2,-0.6) node[draw=white, fill=white] {$P_3$};
\draw[thick] (-0.5,1) node[uStyle] {$x$};
\draw[thick] (-0.5,-1) node[uStyle] {$y$};
\draw[thick] (1,0) node[uStyle] {$u$};
\end{tikzpicture}
\caption{\label{chord-xy}}
\end{subfigure}%
\begin{subfigure}{0.3\textwidth}
\centering
\begin{tikzpicture}[every node/.style={scale=0.8}]
\draw[thick] (-1,0) edge[bend right=90, looseness=1.7, decorate, decoration={snake, amplitude=0.4mm}] (1,0) (-1,0) edge[bend left=90, looseness=1.7, decorate, decoration={snake, amplitude=0.4mm}] (1,0) (-1,0) -- (1,0);
\draw[thick] (0,0.75) node[draw=none, fill=none] {$P_1$};
\draw[thick] (0,-0.75) node[draw=none, fill=none] {$P_2$};
\draw[thick] (-1,0) node[uStyle] {$x$};
\draw[thick] (1,0) node[uStyle] {$y$};
\end{tikzpicture}
\caption{\label{chord-uy}}
\end{subfigure}%
\caption{The 3 subcases in Case 1. (a) Case 1.1: $H_1$ has no chord. (b) Case 1.2: $H_1$ has a chord $xy$. (c) Case 1.3: $H_1$ has a chord $uy$ (i.e. $x=u$).\label{case1}}
\end{center}
\end{figure}
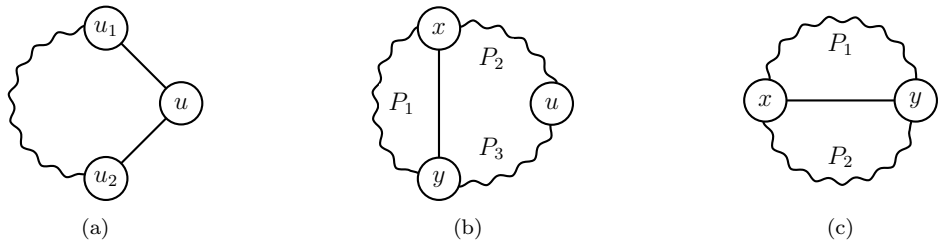

Assume $L(w)=\{a,b\}$ for every 2-vertex $w$
on $P_1$. Recall that $\{a,b\}\subseteq L(x)\cap L(y)$. Let $L(x)=\{a,b,c\}$.
Suppose $L(y)\neq L(x)$; so assume $L(y)=\{a,b,d\}$. By Lemma~\ref{missing-lem},
both $\LL_{x,c}$ and $\LL_{y,d}$ mix (and are nonempty). Further,
$\LL_{x,c}\cap\LL_{y,d}\neq\emptyset$, since $H_2$ is degree-choosable; thus,
$\LL_{x,c}\cup\LL_{y,d}$ mixes.
Since $G[V(P_1)]$ is an odd cycle, $\LL=\LL_{x,c}\cup\LL_{y,d}$, and we are
done. So we instead assume that $L(x)=L(y)=\{a,b,c\}$. 

Since $H_1$ and $\ell(P_1)$ are both even, $\ell(P_2)$ and $\ell(P_3)$ have the same parity. By
Lemma~\ref{missing-lem}, $\LL_{x,c}$ and $\LL_{y,c}$ each mix (and are nonempty). Further,
$\LL=\LL_{x,c}\cup\LL_{y,c}$. So it suffices to show that $\LL_{x,c}$ mixes
with $\LL_{y,c}$. Pick $x_1\in N_{P_2}(x)$ and $y_1\in N_{P_3}(y)$, if they exist;
again, see Figure~\ref{chord-xy}. 

Suppose first that neither $P_2$ nor $P_3$ has any internal vertex.  
Since $|L(u)\cap L(x)|\ge 2$, assume that $a\in L(u)$.  
Since $\ell(P_1)$ is even, we can
color $V(H_1)$ so that all
neighbors of $x$ and $y$ use color $a$.  Since $H_2$ is degree-choosable, we can
extend this coloring to an $L$-coloring $\vph$ of $G$.  Now performing a
$b,c$-swap at $x$ shows that $\LL_{x,c}$ mixes with $\LL_{y,c}$, and we are done.

Assume instead that $P_2$ or $P_3$ contains an internal 2-vertex.  By symmetry,
assume $x_1$ exists.  Let $w_1$ be a 2-vertex
adjacent to $x$ on $P_1$.  Recall that $L(w_1)=\{a,b\}$.  As above, we assume
$a\in L(x_1)$.  Now $\LL_{w_1,a}\cap \LL_{x_1,a}$ mixes by
Lemma~\ref{common-lem}.  To construct an $L$-coloring in this set, we can color
$x$ arbitrarily from $L(x)\setminus\{a\}$ and color greedily toward $H_2$.
Thus, $\LL_{w_1,a}\cap\LL_{x_1,a}$ contains $\vph_1$ and $\vph_2$ such that
$\vph_1(x)=c$ and such that $\vph_2(x)=b$, so $\vph_2(y)=2$.
This again proves that $\LL_{x,c}$ mixes with $\LL_{y,c}$, so we are done.

\textbf{Case 1.3: $\bm{H_1}$ has a chord $\bm{xy}$ with $\bm{x=u}$ (by
symmetry).} Let $P_1$ and $P_2$ be the $u,y$-paths in $H_1$; see Figure \ref{chord-uy}.
Since $H_1$ is even, $\ell(P_1)$ and $\ell(P_2)$ have the same parity. If
$\ell(P_1)$ is odd, then $H_1$ induces a bipartite theta graph, and we are done by Lemma
\ref{bipartite-lem}. So $\ell(P_1)$ is even. Pick $y_1\in N_{P_1}(y)$ and
$y_2\in N_{P_2}(y)$. By symmetry, assume $L(y)=\{a,b,c\}$ and $L(y_1)=\{a,b\}$.
By Lemma~\ref{missing-lem}, $\LL_{y,c}$ is nonempty and mixes. 
By Lemma~\ref{missing-lem}, $\LL_{y,c}$ mixes.

Suppose $L(y_2)=L(y_1)=\{a,b\}$. Pick $\vph\in\LL_{y,a}$. We show that $\vph$ is
$L$-equivalent to some $\vph'\in\LL_{y,c}$. If $\vph(x)\neq c$, then recoloring $y$ with $c$
gives a coloring in $\LL_{y,c}$. So assume $\vph(x)=c$. If $\vph(w)=b$ for some $w\in
N_{G-E(H_1)}(x)$, then there exists $\alpha\in L(x)$ which does not appear on
$N[x]$. We recolor $x$ with $\alpha$ then recolor $y$ with $c$ to get a
coloring in $\LL_{y,c}$. So assume $\vph(w)\neq b$ for every $w\in N_{G-E(H_1)}(x)$.
Now we perform an $a,b$-swap at $y$ followed by a $b,c$-swap at $y$ to get a
coloring in $\LL_{y,c}$.  The same argument shows that every
$\vph\in\LL_{y,b}$ is $L$-equivalent to some coloring in $\LL_{y,c}$. 
Since $\LL_{y,c}$ mixes, also
$\LL_{y,a}\cup\LL_{y,b}\cup\LL_{y,c}$ mixes; that is, $\LL$ mixes.

So we assume $L(x_1)\neq L(y_1)$. By symmetry, assume $L(y_1)=\{a,c\}$. By
Lemma~\ref{missing-lem}, both $\LL_{y,b}$ and $\LL_{y,c}$ mix (and are
nonempty). Further, $\LL_{y,a}$ is nonempty. And for every $\vph\in\LL_{y,a}$,
we can perform an $a,b$-swap (resp.~$a,c$-swap) at $y$ to get a coloring in
$\LL_{y,b}$ (resp.~$\LL_{y,c}$). Thus, $\LL_{y,a}\cup\LL_{y,b}\cup\LL_{y,c}$
mixes; that is, $\LL$ mixes.

\textbf{Case 2: $\bm{|N_{H_1}(v)|=2}$.}
Let $N_{H_1}(v)=\{v_1,v_2\}$; denote the $v_1,v_2$-paths in $H_1$ by $P_1$ and $P_2$. 
Recall, from the start of the proof, that $P_1$ and $P_2$ are both odd.

\textbf{Case 2.1: $\bm{H_1}$ has no chord.} By symmetry, assume $\ell(P_2)>1$.
Pick $w_1\in N_{P_2}(v_1)$ and $w_2\in N_{P_2}(v_2)$; see Figure \ref{2neighbors}. 
Assume
$L(v_1)=\{a,b,c\}$ and $L(w_1)=\{a,b\}$, by symmetry. 
Suppose $L(v_1)\neq
L(v_2)$; specifically, suppose $L(v_2)=\{a,b,d\}$. By Lemma~\ref{missing-lem},
both $\LL_{v_1,c}$ and $\LL_{v_2,d}$ mix. 
By Lemma~\ref{overlap-lem}, if $\ell(P_1)>1$, then $L(w)=\{a,b\}$ for every $w\in V(P_1)$. 
Thus, $\LL_{v_1,c}\cap\LL_{v_2,d}\neq\emptyset$. 
Further, for every $\vph\notin\LL_{v_1,c}\cup\LL_{v_2,d}$, there exists $\gamma\in(\{c,d\}-\vph(v))$.
So we can perform a Kempe swap either at $v_1$
or at $v_2$ to get a coloring in $\LL_{v_1,c}\cup\LL_{v_2,d}$.
Thus, $\LL$ mixes, and we are done. So we assume instead that $L(v_1)=L(v_2)=\{a,b,c\}$.

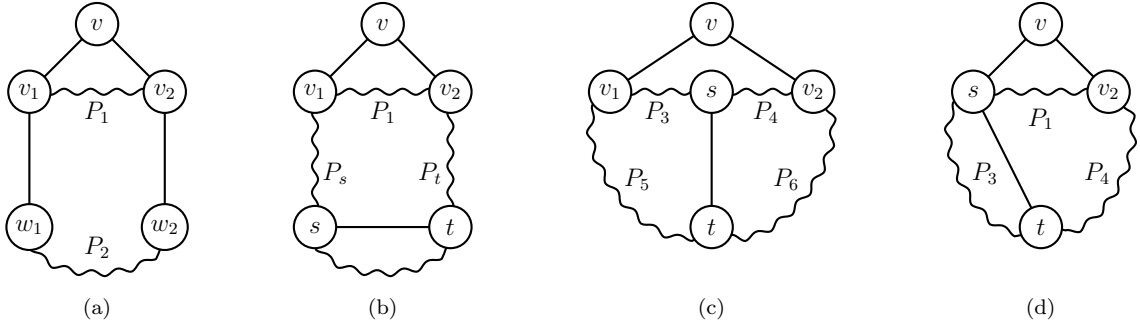
\begin{figure}[t!]
\begin{center}
\begin{subfigure}{0.23\textwidth}
\centering
\begin{tikzpicture}[every node/.style={scale=0.8}, scale=0.9]
\draw[thick] (-1,-1) -- (-1,1) -- (0,2) -- (1,1) -- (1,-1);
\draw[thick] (-1,-1) edge[bend right=90, decorate, decoration={snake, amplitude=0.4mm}] (1,-1) (-1,1) edge[decorate, decoration={snake, amplitude=0.4mm}] (1,1);
\draw[thick] (0,2) node[uStyle] {$v$};
\draw[thick] (-1,1) node[uStyle] {$v_1$};
\draw[thick] (1,1) node[uStyle] {$v_2$};
\draw[thick] (-1,-1) node[uStyle] {$w_1$};
\draw[thick] (1,-1) node[uStyle] {$w_2$};
\draw[thick] (0,0.7) node[draw=none, fill=none] {$P_1$};
\draw[thick] (0,-1.3) node[draw=none, fill=none] {$P_2$};
\end{tikzpicture}
\caption{\label{2neighbors}}
\end{subfigure}%
\begin{subfigure}{0.23\textwidth}
\centering
\begin{tikzpicture}[every node/.style={scale=0.8}, scale=0.9]
\draw[thick] (-1,1) -- (0,2) -- (1,1) (-1,-1) -- (1,-1);
\draw[thick] (-1,1) edge[decorate, decoration={snake, amplitude=0.4mm}] (1,1) (-1,1) edge[decorate, decoration={snake, amplitude=0.4mm}] (-1,-1) (1,1) edge[decorate, decoration={snake, amplitude=0.4mm}] (1,-1) (-1,-1) edge[bend right=90, decorate, decoration={snake, amplitude=0.4mm}] (1,-1);
\draw[thick] (0,2) node[uStyle] {$v$};
\draw[thick] (-1,1) node[uStyle] {$v_1$};
\draw[thick] (1,1) node[uStyle] {$v_2$};
\draw[thick] (-1,-1) node[uStyle] {$s$};
\draw[thick] (1,-1) node[uStyle] {$t$};
\draw[thick] (0,0.7) node[draw=none, fill=none] {$P_1$};
\draw[thick] (-0.7,-0.2) node[draw=none, fill=none] {$P_s$};
\draw[thick] (0.7,-0.2) node[draw=none, fill=none] {$P_t$};
\end{tikzpicture}
\caption{\label{st-p2}}
\end{subfigure}%
\begin{subfigure}{0.3\textwidth}
\centering
\begin{tikzpicture}[every node/.style={scale=0.8}, scale=0.9]
\draw[thick] (-1.5,1) -- (0,2) -- (1.5,1) (0,1) -- (0,-1);
\draw[thick] (-1.5,1) edge[decorate, decoration={snake, amplitude=0.4mm}] (0,1) (0,1) edge[decorate, decoration={snake, amplitude=0.4mm}] (1.5,1) (-1.5,1) edge[bend right=90, decorate, decoration={snake, amplitude=0.4mm}] (0,-1) (1.5,1) edge[bend left=90, decorate, decoration={snake, amplitude=0.4mm}] (0,-1);
\draw[thick] (-1,-1) edge[draw=white, bend right=90, decorate, decoration={snake, amplitude=0.4mm}] (1,-1); 
\draw[thick] (0,2) node[uStyle] {$v$};
\draw[thick] (-1.5,1) node[uStyle] {$v_1$};
\draw[thick] (1.5,1) node[uStyle] {$v_2$};
\draw[thick] (0,1) node[uStyle] {$s$};
\draw[thick] (0,-1) node[uStyle] {$t$};
\draw[thick] (-0.8,0.7) node[draw=none, fill=none] {$P_3$};
\draw[thick] (0.8,0.7) node[draw=none, fill=none] {$P_4$};
\draw[thick] (-1.1,-0.3) node[draw=none, fill=none] {$P_5$};
\draw[thick] (1.1,-0.3) node[draw=none, fill=none] {$P_6$};
\end{tikzpicture}
\caption{\label{st-p1-p2}}
\end{subfigure}%
\begin{subfigure}{0.23\textwidth}
\centering
\begin{tikzpicture}[every node/.style={scale=0.8}, scale=0.9]
\draw[thick] (1,1) -- (0,2) -- (-1,1) -- (0,-1);
\draw[thick] (-1,1) edge[decorate, decoration={snake, amplitude=0.4mm}] (1,1) (-1,1) edge[bend right=90, decorate, decoration={snake, amplitude=0.4mm}] (0,-1) (1,1) edge[bend left=90, decorate, decoration={snake, amplitude=0.4mm}] (0,-1);
\draw[thick] (-1,-1) edge[draw=white, bend right=90, decorate, decoration={snake, amplitude=0.4mm}] (1,-1); 
\draw[thick] (0,2) node[uStyle] {$v$};
\draw[thick] (-1,1) node[uStyle] {$s$};
\draw[thick] (0,-1) node[uStyle] {$t$};
\draw[thick] (1,1) node[uStyle] {$v_2$};
\draw[thick] (0,0.6) node[draw=none, fill=none] {$P_1$};
\draw[thick] (-0.83,-0.2) node[draw=none, fill=none] {$P_3$};
\draw[thick] (0.83,-0.2) node[draw=none, fill=none] {$P_4$};
\end{tikzpicture}
\caption{\label{st-x1-p2}}
\end{subfigure}
\end{center}
\caption{The 4 instances of Case 2, when $H_1$ contains 2 neighbors of $v$: the
first comprises Case~2.1 and the remaining three comprise Case~2.2. (a) $H_1$
has no chord. (b) $H_1$ has a chord $st$ on $P_2$. (c) $H_1$ has a chord $st$
on $P_1\cup P_2$. (d) $H_1$ has a chord $v_1t$ (i.e. $s=v_1$).}
\end{figure}

By Lemma~\ref{missing-lem}, $\LL_{v_1,c}$ and $\LL_{v_2,c}$ each mix. 
If $\ell(P_1)=1$ or $L(w)=\{a,b\}$ for every $w\in V(P_1)$, then $\LL_{v_1,a}$
and $\LL_{v_1,b}$ each mix by Lemma \ref{extra-lem}(a), with $w:=v_2$. 
This is because $\vph(w_2)=\vph(v_1)$ for every $\vph\in\LL_{v_1,a}\cup\LL_{v_1,b}$.
Similarly, $\LL_{v_2,a}$ and $\LL_{v_2,b}$ each mix. 
Further, $\LL_{v_1,a}\cap\LL_{v_2,b}\neq\emptyset$.  Also, for every
$\gamma\in\{a,b\}$, the set $\LL_{v_1,c} \cap \LL_{v_2,\gamma}\neq\emptyset$
and $\LL_{v_2,c} \cap \LL_{v_1,\gamma} \neq\emptyset$. 
So $\LL_{v_1,a}\cup\LL_{v_1,b}\cup\LL_{v_1,c}$ mixes; that is, $\LL$ mixes. 
Thus, we assume $\ell(P_1)>1$. Pick $y_1\in N_{P_1}(v_1)$ and $y_2\in N_{P_1}(v_2)$. 
By the above, we may assume $L(y_1)=L(y_2)=\{a,c\}$. 

By Lemma~\ref{missing-lem}, $\LL_{v_1,b}$ and $\LL_{v_2,b}$ each mix. Also,
$\LL_{v_1,c}\cap\LL_{v_2,b}\neq\emptyset$ and $\LL_{v_1,b}\cap\LL_{v_2,c}\neq\emptyset$.
We note that $\LL=\LL_{v_1,c}\cup\LL_{v_1,b}\cup\LL_{v_2,c}\cup\LL_{v_2,b}$.
So it suffices to show that $\LL_{v_2,c}\cup\LL_{v_2,b}$ mixes. As before,
$\LL_{v_1,a}$ mixes by Lemma \ref{extra-lem}(a), with $w:=v_2$. Moreover,
$\LL_{v_1,a}\cap\LL_{v_2,b}\neq\emptyset$ and
$\LL_{v_1,a}\cap\LL_{v_2,c}\neq\emptyset$. Thus,
$\LL_{v_2,c}\cup\LL_{v_2,b}$ mixes, and we are done.

\textbf{Case 2.2: $\bm{H_1}$ has a chord $\bm{st}$.} Recall, from the start of
the proof, that $H_1$ is an
even cycle (with at most one chord) closest to $H_2$; further, $\ell(P_1)$ and
$\ell(P_2)$ are odd. Assume $s,t\in V(P_2)-\{v_1,v_2\}$ (with $s$ closer to
$v_1$). Let $P_s$ (resp. $P_t$) be the $s,v_1$-path (resp. $t,v_2$-path)
avoiding $t$ (resp. avoiding $s$); see Figure \ref{st-p2}. If $P_s$ and $P_t$
have the same parity, then $G[H_1]$ is a bipartite theta graph (since $H_1$ is
even), and we are done by Lemma~\ref{bipartite-lem}. So assume $P_s$ and $P_t$
have opposite parities. Now $H_1[V(P_s)\cup V(P_t)\cup v]$ is an even cycle
closer to $H_2$, contradicting our assumption. The same argument works
(interchanging $P_1$ and $P_2$) if $s,t\in V(P_1)-\{v_1,v_2\}$. 

Assume instead that $s\in V(P_1)-\{v_1,v_2\}$ and $t\in V(P_2)-\{v_1,v_2\}$.
Let $P_3$, $P_4$, $P_5$, and $P_6$ be the $v_1,s$-path, $v_2,s$-path,
$v_1,t$-path, and $v_2,t$-paths forming $P_1$ and $P_2$; see Figure
\ref{st-p1-p2}. By symmetry, assume $P_3$ is even and $P_4$ is odd. Now $P_5$
is even and $P_6$ is odd; otherwise, $G[H_1]$ is a bipartite theta graph, and
we are done by Lemma~\ref{bipartite-lem}. But $H_1[V(P_4)\cup V(P_5)\cup v]$ is
an even cycle closer to $H_2$, contradicting our choice of $H_1$. 

So the chord $st$ must have an endpoint in $\{v_1,v_2\}$; say $s=v_1$. Note that
$t\neq v_2$; otherwise, $G[H_1]$ is a bipartite theta graph, and we are done by
Lemma~\ref{bipartite-lem}.  By symmetry, assume $t$ is on $P_2$. Let $P_3$ and
$P_4$ be the $s,t$ and $v_2,t$-paths forming $P_2$; see Figure \ref{st-x1-p2}.
Now $P_3$ is even and $P_4$ is odd; otherwise, $G[H_1]$ is a bipartite theta
graph, and we are done by Lemma~\ref{bipartite-lem}. But again $H_1[V(P_4)\cup v\cup
s]$ is an even cycle closer to $H_2$, contradicting our choice of $H_1$. 
\end{proof}

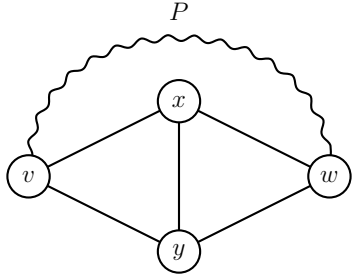
\begin{figure}[h!]
\begin{center}
\begin{tikzpicture}[every node/.style={scale=0.8}, rotate=90] 
\draw[thick] (1,0) -- (0,2) -- (-1,0) -- (0,-2) -- cycle (-1,0) -- (1,0);
\draw[thick] (0,2) edge[bend left=90, looseness=1.5, decorate, decoration={snake, amplitude=0.4mm}] (0,-2);
\draw[thick] (1,0) node[uStyle] {$x$};
\draw[thick] (0,2) node[uStyle] {$v$};
\draw[thick] (-1,0) node[uStyle] {$y$};
\draw[thick] (0,-2) node[uStyle] {$w$};
\draw[thick] (2.2,0) node[uStyle, draw=none, fill=none] {$P$};
\end{tikzpicture}
\end{center}
\caption{A $K_4^+$ formed by subdividing an edge one or more times, and the resulting path $P$.\label{K4plus}}
\end{figure}

\begin{lem}
The graph $K_4^{+}$ formed from $K_4$ by subdividing a single edge one or more times 
is degree-swappable.
\label{special graph}
\label{K4p-lem}
\label{K4p}
\end{lem}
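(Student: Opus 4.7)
I would prove this by direct analysis in the style of Lemma~\ref{good-cycle-lem}. Label the graph as in the figure: the $K_4$ has vertices $v, x, y, w$, and the subdivided edge $vw$ becomes a path $P = v, p_1, \ldots, p_{\ell-1}, w$ with $\ell \ge 2$; fix a degree-assignment $L$ for $G = K_4^+$. For every edge $e \in E(G)$, the graph $G - e$ is connected and degree-choosable: when $e \in E(P)$ the block $K_4 - vw$ persists (an even 4-cycle with a chord, hence not a Gallai tree), and when $e$ is an edge of $K_4$ the graph $G - e$ remains 2-connected with a non-Gallai block. Hence Lemma~\ref{overlap-lem} gives $|L(u) \cap L(z)| \ge 2$ for every edge $uz$. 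Applying this transitively along $P$ together with $|L(p_i)| = 2$ yields $L(p_1) = \cdots = L(p_{\ell-1}) =: \{a, b\}$ and $\{a, b\} \subseteq L(v) \cap L(w)$. Write $L(v) = \{a, b, c\}$ and $L(w) = \{a, b, d\}$, allowing $c = d$.

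Since $c \in L(v) \setminus L(p_1)$ and $G - v$ is connected, Lemma~\ref{missing-lem} shows that $\LL_{v, c}$ is nonempty and mixes; symmetrically $\LL_{w, d}$ also mixes. It therefore suffices to show that every $\vph \in \LL$ is $L$-equivalent to some coloring in $\LL_{v, c}$. For $\vph \in \LL_{v, a}$ (the case $\vph(v) = b$ is symmetric), I consider an $a, c$-swap at $v$. Since $\vph(p_1) \in \{a, b\}$ and $c \notin \{a, b\}$, the $a, c$-component of $v$ never enters $P$; it extends beyond $\{v\}$ only if $x$ or $y$ is colored $c$, which requires $c \in L(x) \cup L(y)$. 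If $c \notin L(x) \cup L(y)$, the component is exactly $\{v\}$, and the swap immediately sends $\vph$ into $\LL_{v, c}$, as desired.

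Otherwise, when $c \in L(x) \cup L(y)$, I perform a preparatory Kempe swap within $\{v, x, y, w\}$ to either move $c$ off $\{x, y\}$ or reach $\LL_{w, d}$ as an intermediate target, using $|L(x) \cap L(y)| \ge 2$ for the freedom to recolor $x, y$ and (when $c \ne d$) the mixing of $\LL_{w, d}$ for the alternate destination. The main obstacle I expect is the subcase $c \in L(x) \cap L(y)$ with $\vph$ actually placing $c$ on one of $x, y$; here a multi-step sequence is required, with exact form depending on the intersection pattern of $L(x), L(y), L(w)$ and on whether $c = d$, combining swaps in the $\{a, b, c\}$-palette at $v$ with swaps in the $\{a, b, d\}$-palette at $w$, in the spirit of the chord/cycle cases of Lemma~\ref{good-cycle-lem}.
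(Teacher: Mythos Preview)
Your setup is essentially the paper's: the reduction via Lemma~\ref{overlap-lem} to $L(p_i)=\{a,b\}$ and $\{a,b\}\subseteq L(v)\cap L(w)$, and the identification of $\LL_{v,c}$ and $\LL_{w,d}$ as the target sets, are exactly how the paper begins. But your proposal stops precisely where the work begins.

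Two concrete gaps. First, you claim it suffices to show every $\vph$ reaches $\LL_{v,c}$, yet you later propose reaching $\LL_{w,d}$ ``as an intermediate target''. For that to help you must first prove that $\LL_{v,c}\cup\LL_{w,d}$ mixes when $c\ne d$, and this is \emph{not} automatic: if $\{c,d\}\subseteq L(x)\cap L(y)$ and $L(x)=L(y)$, then $\LL_{v,c}\cap\LL_{w,d}=\emptyset$ (the two colors $c,d$ cannot both go on $\{x,y\}$ while $v,w$ use $c,d$), so a nontrivial bridging argument is required. The paper spends a full paragraph on exactly this, splitting on the parity of $\ell(P)$ and exhibiting explicit colorings and swaps.

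Second, the sentence ``a multi-step sequence is required, with exact form depending on the intersection pattern of $L(x),L(y),L(w)$ and on whether $c=d$'' is a description of the remaining proof, not the proof itself. In the paper this case analysis is the bulk of the argument: one must handle, e.g., $\vph(v)=1$, $\vph(x)=3$, and then branch on whether $1\in L(x)$, whether $\vph(w)=1$ or $2$, etc. None of these steps are individually deep, but each needs to be checked for $L$-validity, and you have not done so. Your proposal as written is an outline that correctly identifies the strategy but leaves the substantive verification undone.
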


\begin{proof}
Let $P$ be the
path formed by subdividing the edge in $K_4$ one or more times.
Denote the 3-vertices of $G$ by $v,w,x,y$, where $v$ and $w$ each have
a 2-neighbor; see Figure~\ref{K4plus}. Let $L$ be a degree-assignment for $G$.
By symmetry and by Lemma~\ref{overlap-lem}, we assume that $L(z)=\{1,2\}$ for
every $z\in V(P)$ and $\{1,2\}\subseteq L(v)\cap L(w)$. By symmetry, assume
that $L(v)=\{1,2,3\}$.

Assume first that $L(w)=L(v)=\{1,2,3\}$. By Lemma~\ref{missing-lem}, each of
$\LL_{v,3}$ and $\LL_{w,3}$ mix. Moreover,
$\LL_{v,3}\cap\LL_{w,3}\neq\emptyset$. Thus, $\LL_{v,3}\cup\LL_{w,3}$ mixes.
Let $\LL_1:=\LL_{v,3}\cup\LL_{w,3}$ and $\LL_2:=\LL\setminus\LL_1$. We now show
that every $\vph\in\LL_2$ mixes with $\LL_1$; thus $\LL$ mixes. Pick
$\vph\in\LL_2$ and assume by symmetry that $\vph(v)=1$. If
$3\notin\{\vph(x),\vph(y)\}$, then we recolor $v$ with 3 to get a coloring in
$\LL_1$, and we are done. So assume by symmetry that $\vph(x)=3$. Now suppose
$\vph(w)=1$. If $1\notin L(x)$, there exists $\gamma\in L(x)$ with
$\gamma\notin\cup_{z\in N[x]}\vph(z)$. So we recolor $x$ with $\gamma$ then
recolor $v$ with 3, and we are done. If, instead, $1\in L(x)$, then a 1,3-swap
at $v$ gives a coloring in $\LL_1$, and we are done. So assume instead that
$\vph(w)=2$. If $1\notin L(x)$, then $2\in L(x)$ by Lemma~\ref{overlap-lem}.
Now a 2,3-swap at $w$ gives a coloring in $\LL_1$, and we are done. If,
instead, $1\in L(x)$, then a 1,3-swap at $v$ gives a coloring in $\LL_1$, and
we are done.

Instead assume $L(w)\neq L(v)$; specifically, assume $L(w)=\{1,2,4\}$. By
Lemma~\ref{missing-lem}, the sets $\LL_{v,3}$ and $\LL_{w,4}$ each mix. Let
$\LL_1:=\LL_{v,3}\cup\LL_{w,4}$ and $\LL_2=:\LL\setminus\LL_1$. We show that
$\LL_1$ mixes. If $\{3,4\}\not\subseteq L(x)\cap L(y)$ or $L(x)\neq L(y)$, then
$\LL_{v,3}\cap\LL_{w,4}\neq\emptyset$; thus, $\LL_1$ mixes. Otherwise,
$L(x)=L(y)=\{3,4,\alpha\}$ with $\alpha\in\{1,2\}$ by Lemma~\ref{overlap-lem}.
By symmetry, assume $L(x)=L(y)=\{3,4,1\}$. We show again that $L_1$ mixes.

Suppose $\ell(P)$ is even. 
Pick $\vph\in\LL_{v,2}\cap\LL_{w,2}\cap\LL_{x,3}\cap\LL_{y,4}$. 
Now we can recolor either $x$ with 1 then $v$ with 3, or $y$ with 1 then $w$ with 4 to get colorings in $\LL_{v,3}$ and $\LL_{w,4}$, respectively. Thus, $\LL_1$ mixes. 
Instead assume that $\ell(P)$ is odd. Pick $\vph_1\in\LL_{v,1}\cap\LL_{w,2}\cap\LL_{x,3}\cap\LL_{y,4}$ and $\vph_2\in\LL_{v,2}\cap\LL_{w,1}\cap\LL_{x,3}\cap\LL_{y,4}$.
Now a 1,3-swap at $v$ in $\vph_1$ or a 1,4-swap at $w$ in $\vph_2$ give
colorings in $\LL_{v,3}$ and $\LL_{w,4}$, respectively. Also, $\vph_1$ and
$\vph_2$ are $L$-equivalent, as witnessed by a 1,2-swap at $v$. Thus, $\LL_1$ mixes.

Finally, we show that every $\vph^*\in\LL_2$ mixes with $\LL_1$; thus, $\LL$
mixes. Pick $\vph^*\in\LL_2$ with $\vph^*(v)=1$, by symmetry. Note that
$\{\vph^*(x),\vph^*(y)\}=\{3,4\}$; otherwise, we can either recolor $v$ with 3
or $w$ with 4 to get a coloring in $\LL_1$, and we are done. 
So assume $\vph^*(x)=3$ and $\vph^*(y)=4$. 
If $\ell(P)$ is odd, then we assume $\vph^*(v)=1$ and $\vph^*(w)=2$; if not,
then we achieve this with a 1,2-swap at $v$.  Now a 1,3-swap at $v$ shows that
$\vph^*$ mixes with $\LL_1$.  Instead assume that $\ell(P)$ is even.  Now assume
$\vph^*(v)=\vph^*(w)=2$; if not, then we achieve this by a 1,2-swap at $v$.
Recolor $x$ with 1, then recolor $v$ with 3.
\end{proof}

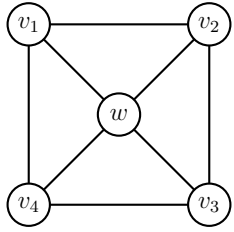
\begin{figure}[h!]
\begin{center}
\begin{tikzpicture}[every node/.style={scale=0.8}, xscale=-1, scale=1.2]
\draw[thick] (1,1) -- (-1,1) -- (-1,-1) -- (1,-1) -- cycle (1,1) -- (0,0) -- (-1,1) (-1,-1) -- (0,0) -- (1,-1);
\draw[thick] (1,1) node[uStyle] {$v_1$};
\draw[thick] (-1,1) node[uStyle] {$v_2$};
\draw[thick] (-1,-1) node[uStyle] {$v_3$};
\draw[thick] (1,-1) node[uStyle] {$v_4$};
\draw[thick] (0,0) node[uStyle] {$w$};
\end{tikzpicture}
\end{center}
\caption{The 4-wheel.\label{W4}}
\end{figure}

Let $W_4:=C_4\vee K_1$; this is the ``4-wheel'', or wheel with 4 spokes (see
Figure~\ref{W4}).

\begin{lem}
\label{W4-lem}
The graph $W_4$ is degree-swappable.
\end{lem}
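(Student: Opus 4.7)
The plan is to reduce via Lemma~\ref{overlap-lem} and then pivot on the hub color. Since $W_4$ is $3$-connected and is neither an odd cycle nor a complete graph, $W_4-e$ is $2$-connected and not a Gallai tree for every edge $e$, and is therefore degree-choosable by the \erdos--Rubin--Taylor theorem. Applying Lemma~\ref{overlap-lem} to each of the eight edges of $W_4$, we may assume $|L(u)\cap L(v)|\ge 2$ for every $uv\in E(W_4)$. In particular $|L(w)\cap L(v_i)|\ge 2$ for each $i$, and since $|L(w)|=4>3=|L(v_i)|$, there exists a color $\alpha^*\in L(w)$ with $\alpha^*\notin L(v_j)$ for some index $j$. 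By Lemma~\ref{missing-lem}, applied with the vertex $w$ (for which $W_4-w=C_4$ is connected) and its neighbor $v_j$ (using $\alpha^*\in L(w)\setminus L(v_j)$), the set $\LL_{w,\alpha^*}$ is nonempty and mixes.

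It therefore suffices to show that every $\vph\in\LL$ is $L$-equivalent to some coloring in $\LL_{w,\alpha^*}$. Let $\vph$ satisfy $\vph(w)=\beta\ne\alpha^*$. The key move is the $\alpha^*,\beta$-swap at $w$: since every $v_i$ is adjacent to $w$, no $v_i$ has color $\beta$ in $\vph$, so the $\alpha^*,\beta$-component of $w$ equals $\{w\}\cup S$, where $S:=\{v_i:\vph(v_i)=\alpha^*\}$. The set $S$ is an independent set in $C_4$ (hence $|S|\le 2$) and does not contain $v_j$ (since $\alpha^*\notin L(v_j)$). Performing this swap lands $\vph$ in $\LL_{w,\alpha^*}$ provided $\beta\in L(v_i)$ for every $v_i\in S$; this is automatic when $S=\emptyset$.

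The main obstacle is handling the case when some $v_i\in S$ has $\beta\notin L(v_i)$. In that case we first perform auxiliary $L$-valid Kempe swaps to evacuate the color $\alpha^*$ from each such $v_i$, before executing the hub swap. Since $|L(v_i)|=3$ and $v_i$ has only two rim neighbors, there is room to choose $\gamma\in L(v_i)\setminus\{\alpha^*,\beta\}$ so that the $\alpha^*,\gamma$-swap at $v_i$ is $L$-valid, with its component confined to the rim and avoiding $v_j$. After at most $|S|\le 2$ such preliminary rim swaps, either $S$ becomes empty or every $v_i\in S$ now satisfies $\beta\in L(v_i)$, after which the hub swap succeeds. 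A short case analysis on $|S|\in\{1,2\}$ and on the overlap pattern among the lists $L(v_i)$, $L(v_{i\pm 1})$, and $L(w)$ handles the remaining configurations, exploiting the slack provided by $|L(w)|$ being one larger than $|L(v_i)|$.
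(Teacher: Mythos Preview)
Your opening reductions are correct: Lemma~\ref{overlap-lem} applies to every edge of $W_4$, and Lemma~\ref{missing-lem} does give you a nonempty mixing set $\LL_{w,\alpha^*}$. Pivoting on the hub is a natural idea and genuinely different from the paper's approach, which instead pivots on the non-adjacent rim pair $v_1,v_3$ via Lemma~\ref{common-lem}(3) and a case split on $|L(v_1)\cap L(v_3)|$.

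However, your evacuation step contains a real gap. The sentence ``there is room to choose $\gamma\in L(v_i)\setminus\{\alpha^*,\beta\}$ so that the $\alpha^*,\gamma$-swap at $v_i$ is $L$-valid, with its component confined to the rim and avoiding $v_j$'' is false in general. Take $L(w)=\{1,2,3,4\}$, $L(v_1)=L(v_3)=\{2,4,5\}$, $L(v_2)=L(v_4)=\{1,2,5\}$; all eight pairwise overlap conditions hold. With $\alpha^*=4$ and the $L$-coloring $\vph(w)=1$, $\vph(v_1)=\vph(v_3)=4$, $\vph(v_2)=2$, $\vph(v_4)=5$, you have $S=\{v_1,v_3\}$ and $\beta=1\notin L(v_1)$. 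The only candidates are $\gamma\in\{2,5\}$, but the $4,2$-component of $v_1$ is $\{v_1,v_2,v_3\}$ and the swap would send $v_2\mapsto 4\notin L(v_2)$; the $4,5$-component is $\{v_1,v_4,v_3\}$ and would send $v_4\mapsto 4\notin L(v_4)$. Neither $\alpha^*,\gamma$-swap at $v_i$ is $L$-valid, so no ``short case analysis'' can establish the claim as stated.

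The example can be salvaged (e.g.\ first recolor $v_2$ from $2$ to $5$, then $v_1$ from $4$ to $2$, then $v_3$ from $4$ to $2$, then $w$ to $4$), but this requires moves on rim vertices \emph{outside} $S$, which your argument never invokes. One could also note that in this particular $L$ a smarter choice $\alpha^*=3$ makes $S=\emptyset$ always---but you did not build any optimization of $\alpha^*$ into the argument, and it is not clear such a choice is always available. To make your hub-pivot approach into a proof you would need either a more flexible evacuation procedure that manipulates all rim vertices, or a preliminary argument that $\alpha^*$ can be chosen so that the bad case never arises; the ``short case analysis'' placeholder is currently hiding the missing idea.
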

\begin{proof}
Let $G:=W_4$.  Denote the 3-vertices by $v_1,v_2,v_3,v_4$, in order along a
4-cycle, and denote the dominating vertex by $w$; see Figure~\ref{W4}.  Fix a degree assignment $L$
for $G$.  Note that $|L(x)\cap L(y)|\ge 2$ for all $xy\in E(G)$, by
Lemma~\ref{overlap-lem}.  Since $|L(v_1)\cap L(v_2)|\ge 2$ and $|L(v_2)\cap
L(v_3)|\ge 2$ and $|L(v_2)|=3$, we conclude from Pigeonhole that $|L(v_1)\cap
L(v_3)|\ge 1$.  We consider the three cases $|L(v_1)\cap L(v_3)|\in\{1,2,3\}$.

\textbf{Case 1: $\bm{|L(v_1)\cap L(v_3)|=3}$.}
Assume $L(v_1)=L(v_3)$. 
If $L(v_2)\neq L(v_1)$, then there exists $\beta\in L(v_1)\setminus L(v_2)$
since $|L(v_2)|=|L(v_1)|$. Further, since $L(v_1)=L(v_3)$, we have $L(v_1)\cap
L(v_3)=L(v_1)$ so $\cup_{\alpha\in L(v_1)\cap L(v_3)}\LL_{v_1,\alpha}=\LL$. By
Lemma~\ref{common-lem}(3), $\LL$ mixes.
So assume $L(v_2)=L(v_1)$ and, by symmetry, $L(v_4)=L(v_1)$.  So there exists
$\alpha\in L(w)\setminus\bigcup_{i=1}^4L(v_i)$, and clearly $\LL_{w,\alpha}$
mixes.  Given an $L$-coloring $\vph$ with $\vph(w)\ne \alpha$, we can simply
recolor $w$ with $\alpha$.  Thus, $\LL$ mixes.

\textbf{Case 2: $\bm{|L(v_1)\cap L(v_3)|=2}$.}
Assume that $L(v_1)=\{a,b,c\}$ and $L(v_3)=\{a,b,d\}$.  
If $\{c,d\}\not\subseteq L(v_2)\cap L(v_4)\cap L(w)$, then $\LL$ mixes by
Lemma~\ref{common-lem}(3), as above.  
So assume $\{c,d\}\subseteq L(v_2)\cap L(v_4)\cap L(w)$.
By Case 1 and symmetry, we assume $L(v_2)\ne L(v_4)$.
Interchanging $v_2$ and $v_4$ with $v_1$ and $v_3$ (and
interchanging $c$ and $d$ with $a$ and $b$) shows that also $a,b\in L(w)$.
Further, $L(v_2)\cup L(v_4)\subseteq L(w)$.  Thus, 
$L(v_2)=\{a,c,d\}$ and $L(v_4)=\{b,c,d\}$,
up to possibly swapping $v_2$ and $v_4$. 
Now by Lemma~\ref{common-lem}(3), both
$\bigcup_{\alpha\in\{a,b\}}\LL_{v_1,\alpha}\cup\LL_{v_3,\alpha}$
and $\bigcup_{\alpha\in\{c,d\}}\LL_{v_2,\alpha}\cup\LL_{v_4,\alpha}$ mix.
The union of these two sets is all of $\LL$.  Further, the two sets mix, since
some coloring $\vph$ lies in both; namely, $\vph(v_1)=\vph(v_3)=a$,
$\vph(v_2)=\vph(v_4)=c$, and $\vph(w)=d$.  Thus, $\LL$ mixes.

\textbf{Case 3: $\bm{|L(v_1)\cap L(v_3)|=1}$.}
Assume $L(v_1)=\{a,b,c\}$ and $L(v_3)=\{a,d,e\}$.  By symmetry between $v_1,v_3$
and $v_2,v_4$, we assume that $|L(v_2)\cap L(v_4)|=1$.  Since $|L(v_i)\cap
L(v_j)|\ge 2$ for all $i\in\{1,3\}$ and $j\in\{2,4\}$, we assume that
$L(v_2)=\{a,b,d\}$ and $L(v_4)=\{a,c,e\}$.  
Since $c\in L(v_1)\setminus L(v_2)$, Lemma~\ref{common-lem}(3) shows that
$\LL_{v_1,a}\cup \LL_{v_1,c}$ mixes.
Since $b\in L(v_1)\setminus L(v_4)$, Lemma~\ref{common-lem}(3) shows that
$\LL_{v_1,a}\cup \LL_{v_1,b}$ mixes.  Thus, $\LL$ mixes.
\end{proof}

\section{Proof of Main Theorem}
\label{main-proof-sec} 

In this section, we prove our main result: Every connected $k$-regular graph
(with $k\ge 3$) is $k$-swappable unless $G=K_{k+1}$ or $G=K_2\,\square\, K_3$.
Further, these exceptional graphs are $L$-swappable whenever $L$ is a
$\Delta$-assignment that is not identical everywhere.
We split the main result into 4 cases depending on the connectivity of $G$. In
Theorem~\ref{connectivity1}, we prove the result for connectivity 1. In
Theorem~\ref{connectivity2}, we prove the result for connectivity 2 by using a
cut set of size 2 (a \textit{2-cut}), and showing that $G$ contains an induced
degree-swappable subgraph from the family of graphs compiled in 
Section~\ref{degree-swap-sec}. 

For connectivity 3, we split into three cases: (i) $k\geq5$, which we prove in
Lemma~\ref{connectivity3-main-lem}, (ii) $k=4$, which we prove in
Lemma~\ref{connectivity3-4reg-lem}, and (iii) $k=3$, which we prove in
Lemma~\ref{connectivity3-3reg-lem}. For cases (i) and (ii) we use a 3-cut and
show that $G$ contains an induced degree-swappable subgraph from 
Section~\ref{degree-swap-sec}.  For
case (iii), we first show that for every 3-assignment $L$ of $G$, the lists
must be identical for all vertices. Then, in Theorem~\ref{connectivity3}, we
invoke the result of Feghali, Johnson, and Paulusma for 3-colorings of
3-regular graphs \cite[Theorem 1]{FJP} (for completeness, we state it below).
Finally, we prove the result for 4-connected graphs in
Theorem~\ref{connectivity4} as follows. In Lemmas~\ref{4connected-lem}
and~\ref{clique-lem}, we handle the case of
$k$-assignments $L$ that are not identical for all
vertices (Lemma~\ref{clique-lem} handles the case $G=K_{k+1}$). In
Lemma~\ref{noW4-coloring-lem}, we show that either $G$ contains a
4-wheel (so we are done by Lemma~\ref{W4-lem}), or the absence of a 4-wheel
restricts the possible $L$-colorings enough that $G$ must be $L$-swappable.

\begin{thmA}{\cite[Theorem 1]{FJP}}
If $G$ is a connected 3-regular graph that is neither $K_4$ nor the graph
$K_2\,\square\, K_3$, then all 3-colorings of $G$ are 3-equivalent.
\label{FJP Theorem}
\end{thmA}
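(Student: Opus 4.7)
The plan is to reduce the theorem to the structural results already developed in this paper, using the Key Lemma (Lemma \ref{H-lem}) as the principal tool. Concretely, the goal is to show that every connected 3-regular graph $G$ not isomorphic to $K_4$ or $K_2\dbox K_3$ contains, as an induced subgraph, one of the degree-swappable graphs compiled in Section \ref{degree-swap-sec}: a bipartite theta graph (Lemma \ref{bipartite-lem}), the graph $K_4^+$ (Lemma \ref{K4p-lem}), the 4-wheel $W_4$ (Lemma \ref{W4-lem}), or two good cycles configured as in Lemma \ref{good-cycle-lem}. Applying Lemma \ref{H-lem} with $f(v)=d_G(v)=3$ for every $v$ (so the constant list $L(v):=\{1,2,3\}$ is a valid degree-assignment) then gives that $G$ is 3-swappable, which is the desired conclusion.

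I would proceed by induction on $|V(G)|$, splitting according to the vertex connectivity $\kappa(G)$. Small base cases can be verified directly; for instance, $K_{3,3}$ contains the induced bipartite theta graph $\Theta_{2,2,2}=K_{2,3}$, obtained by deleting any single vertex. For the inductive step with $\kappa(G)\le 3$, use a smallest vertex cut $S$ to split $G$ into pieces, reduce each piece by ``short-circuiting'' across $S$ into a smaller 3-regular graph, apply the inductive hypothesis to the pieces, and lift Kempe-swap sequences from the pieces back to $G$ through the cut. For $\kappa(G)\ge 4$, the high connectivity forces $G$ to contain either an induced $W_4$ or an induced bipartite theta graph via a direct analysis of the neighborhood of any fixed vertex.

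The main obstacle I anticipate is the structural dichotomy itself: showing that every non-exceptional 3-regular graph really does contain one of the listed induced degree-swappable subgraphs. The difficulty is that Example \ref{ex1} shows a single good cycle is not enough, so the argument must locate either two compatibly placed good cycles, an induced $K_4^+$, an induced $W_4$, or a bipartite theta graph. The hardest sub-case is when $G$ has girth 3 and its triangles interact rigidly (the regime in which $G$ locally resembles $K_4$ or $K_2\dbox K_3$), since there the target subgraphs may be absent or well-hidden. Resolving this sub-case, and propagating the avoidance of the two exceptional graphs through the inductive reduction, is where I expect the bulk of the technical work to lie.
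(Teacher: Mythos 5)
First, a point of orientation: the paper does not prove this statement at all. It is Theorem~1 of Feghali, Johnson, and Paulusma~\cite{FJP}, quoted verbatim and used as a black box precisely in the one case (3-regular, connectivity 3, all lists identical) where the paper's structural machinery does not deliver; this is also why the abstract claims self-containment only for $k\ge 4$. So your proposal is not ``a different route to the paper's proof'' but an attempt to prove an external theorem from the paper's toolkit, and that attempt has a genuine gap.

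The proposed structural dichotomy is false. The Petersen graph is connected, cubic, and is neither $K_4$ nor $K_2\dbox K_3$, yet it contains none of the induced degree-swappable subgraphs on your list: $W_4$ has a vertex of degree $4$ and therefore never occurs in any $3$-regular graph (so your $\kappa\ge 4$ case is both vacuous for cubic graphs and internally inconsistent); $K_4^+$ contains a triangle, and Petersen is triangle-free; every bipartite theta graph on at most $10$ vertices is ruled out (girth $5$ kills $\Theta_{2,2,2}$ and $\Theta_{2,2,4}$, an induced $\Theta_{3,3,3}$ or $\Theta_{2,4,4}$ would need three pairwise internally disjoint paths whose union is induced, which a short check of the two length-$3$ and two length-$4$ paths between nonadjacent vertices excludes, and the $10$-vertex thetas would have to be induced spanning subgraphs of Petersen, which has $15$ edges); and any good cycle in Petersen has at least $6$ vertices (girth $5$), so two good cycles meeting in at most one vertex as in Lemma~\ref{good-cycle-lem} would require at least $11$ vertices. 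Hence Lemma~\ref{H-lem} cannot be invoked, and the plan collapses already on this example (and, more generally, on small cubic graphs of girth at least $5$). The remaining ingredient you sketch --- an induction over small cuts that ``short-circuits'' the pieces and lifts Kempe-swap sequences back through the cut while tracking the exceptional graphs $K_4$ and $K_2\dbox K_3$ --- is exactly the hard technical content of~\cite{FJP} (note that lifting swaps through a cut is delicate precisely because, as Example~\ref{ex1} shows, colorings of a piece need not mix); asserting it as a one-line reduction does not constitute a proof.
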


\begin{thm}
For $k\ge 3$, if $G$ is $k$-regular with connectivity 1, then $G$ is $k$-swappable.
\label{connectivity1}
\end{thm}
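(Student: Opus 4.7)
The plan is to find two degree-choosable blocks in $G$ and finish via the second statement of Lemma~\ref{good-cycle-lem}. Since $G$ has connectivity $1$, its block-cut tree $T$ has at least two block nodes, and since every cut vertex of $G$ lies in at least two blocks, every leaf of $T$ corresponds to a block rather than a cut vertex. Thus $G$ has at least two leaf blocks, and the whole argument reduces to showing that every leaf block of $G$ is degree-choosable.

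Fix a leaf block $B$ of $G$ with unique cut vertex $v$. Every $w\in V(B)\setminus\{v\}$ is not a cut vertex of $G$, so all of its neighbors lie inside $B$ and $d_B(w)=d_G(w)=k$; in particular $|V(B)|\ge k+1\ge 4$ and $B$ is $2$-connected. By the theorem of \erdos, Rubin, and Taylor, such a $B$ is degree-choosable unless it is an odd cycle or a complete graph, so it suffices to rule out these two cases. Both are immediate. If $B$ were an odd cycle, some $w\in V(B)\setminus\{v\}$ would satisfy $d_B(w)=2$, contradicting $k\ge 3$. If instead $B=K_{k+1}$, then $d_B(v)=k=d_G(v)$, so $v$ would have no neighbor in $V(G)\setminus V(B)$; but $v$ is a cut vertex, so $v$ lies in some other block $B'$ which meets $B$ only at $v$, forcing $v$ to have a neighbor in $V(B')\setminus V(B)$, a contradiction. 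Hence every leaf block is degree-choosable, Lemma~\ref{good-cycle-lem} gives that $G$ is degree-swappable, and since $G$ is $k$-regular every $k$-assignment is a degree-assignment, so $G$ is $k$-swappable. No step is a real obstacle: once the reduction to leaf blocks is made, each of the two excluded structures is eliminated by a one-line degree count.
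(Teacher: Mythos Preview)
Your proof is correct and follows essentially the same approach as the paper: both identify at least two endblocks (leaf blocks), show each must be degree-choosable by ruling out the odd-cycle and clique cases via a degree count at a non-cut-vertex, and then invoke Lemma~\ref{good-cycle-lem}. Your version is simply more explicit about the block--cut tree structure and the contradiction in the $K_{k+1}$ case.
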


\begin{proof}
Since $G$ has connectivity 1, it contains a cut-vertex. Thus, $G$ contains at
least two endblocks. If $G$ contains at most one degree-choosable block, then
some endblock $B$ is not degree-choosable. Let $v$ be the cut-vertex in $B$.
Since $k\geq3$ and $G$ is $k$-regular, $B=K_{k+1}$ and $d_B(v)=k$. But now
$G=B$ which contradicts that $G$ has connectivity 1. Thus, $G$ contains at least
two degree-choosable endblocks. By Lemma~\ref{good-cycle-lem}, $G$ is $k$-swappable. 
\end{proof}

Now we consider the case that $G$ has a vertex cut of size at most 3.
In a graph $G$, a block is \EmphE{Gallai}{-4mm} if it an odd cycle or a clique;
otherwise it is \Emph{non-Gallai}.

\begin{thm}
For $k\geq3$, if $G$ is $k$-regular with connectivity 2, then $G$ is $k$-swappable.  
\label{connectivity2}
\end{thm}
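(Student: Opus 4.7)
The plan is to exhibit an induced subgraph of $G$ drawn from the family of degree-swappable graphs established in Section~\ref{degree-swap-sec} (bipartite theta graphs via Lemma~\ref{bipartite-lem}, the two-good-cycles configuration via Lemma~\ref{good-cycle-lem}, $K_4^+$ via Lemma~\ref{K4p-lem}, or $W_4$ via Lemma~\ref{W4-lem}), and then invoke the Key Lemma to conclude that $G$ is degree-swappable, hence $k$-swappable.

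Fix a 2-cut $\{u,v\}$ of $G$ and let $C_1,\ldots,C_m$ with $m\ge 2$ be the components of $G-\{u,v\}$; set $G_i:=G[V(C_i)\cup\{u,v\}]$. Because $G$ is 2-connected with $\{u,v\}$ a minimum cut, each of $u,v$ has a neighbor in every $C_i$ and each $G_i+uv$ is 2-connected, so every $G_i$ contains a $u,v$-path through $C_i$. Taking each such path $P_i$ to be shortest in $G_i$ makes it induced in $G$, and separation by $\{u,v\}$ ensures no edges between $V(P_i)\setminus\{u,v\}$ and $V(P_j)\setminus\{u,v\}$ for $i\ne j$.

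The argument splits into two main cases. \textbf{Case A:} $G$ contains three internally disjoint $u,v$-paths $P_1,P_2,P_3$ (as when $m\ge 3$, or when some $G_i$ itself admits two such paths, or when $uv\in E(G)$). Choosing them as short as possible and rerouting along any residual chord, I may assume their union $\Theta$ is induced. If the three lengths share a common parity, then every cycle $P_i\cup P_j$ is even, $\Theta$ is bipartite, and Lemma~\ref{bipartite-lem} together with the Key Lemma finishes the case; otherwise exactly one $P_i\cup P_j$ is an even cycle $C$, and either $\Theta$ itself already contains an induced $K_4^+$ or $W_4$ (when the paths are short), or $C$ pairs with a second good cycle supplied by the remaining $k$-regular structure to fulfill the hypothesis of Lemma~\ref{good-cycle-lem}. \textbf{Case B:} only two internally disjoint $u,v$-paths exist, forcing $m=2$, $uv\notin E(G)$, and each $G_i$ to contribute a single essential $u,v$-path; here the $k\ge 3$ regularity of $G$ forces additional edges within some $G_i$ beyond those of a single path, and locating these at neighbors of $u$ or $v$ produces a triangle or short induced cycle that, combined with the $u,v$-path through the other component, yields an induced $K_4^+$, $W_4$, or bipartite theta subgraph of $G$.

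The main obstacle is ensuring that the candidate subgraph is genuinely induced: chords across the 2-cut are automatically forbidden, but chords inside a single $G_i$ may spoil either a theta structure or the shape of a good cycle, so a careful choice of shortest paths and local rerouting around chords while preserving the needed parities is required. Case~B is particularly delicate, as the scarcity of internally disjoint $u,v$-paths must be reconciled with the abundance of edges forced by $k$-regularity, and the eventual subgraph may depend sensitively on the parities of the two $u,v$-paths and on whether $u$ and $v$ share common neighbors.
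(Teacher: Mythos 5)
Your high-level plan (find an induced subgraph from the Section~\ref{degree-swap-sec} family and apply the Key Lemma) is the same as the paper's, but the execution has genuine gaps at exactly the points where the real work lies. First, in Case~A your fallback when the three $u,v$-paths do not all have the same parity is wrong or unproved. A theta graph has maximum degree $3$ and only two vertices of degree $3$, so it can never contain $K_4^+$ (four vertices of degree $3$) or $W_4$ (a vertex of degree $4$) as a subgraph, induced or otherwise; the claim that ``$\Theta$ itself already contains an induced $K_4^+$ or $W_4$'' fails even for the smallest non-bipartite theta $\Theta_{1,2,2}=K_4-e$. Moreover, a non-bipartite theta gives you only a single good cycle, and Example~\ref{ex1} shows a lone good cycle need not be degree-swappable, so you genuinely need a \emph{second} good cycle (meeting the intersection hypotheses of Lemma~\ref{good-cycle-lem}: at most one common vertex and at most one extra edge between them) or one of the other structures. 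Your phrase ``a second good cycle supplied by the remaining $k$-regular structure'' asserts precisely the conclusion that has to be proved; producing that second cycle, or showing that its absence forces a $K_4^+$ or bipartite theta, is the entire content of the theorem. The paper does this by analyzing the endblocks of $G-S$: it shows each Gallai endblock is $(k-1)$- or $(k-2)$-regular and sends at least $k-1$ edges to the cut, uses these counts to bound the number and type of endblocks, and in each surviving configuration either exhibits two good cycles, a $K_4^+$, or derives a contradiction (one subcase with $k=3$ is eliminated only by a degree count). None of this bookkeeping appears in your Case~A, and Case~B (``regularity forces additional edges \ldots\ produces a triangle or short induced cycle that \ldots\ yields an induced $K_4^+$, $W_4$, or bipartite theta'') is likewise a statement of intent rather than an argument: a triangle plus a $u,v$-path is none of these graphs, and you never verify the two-triangles-sharing-an-edge structure that $K_4^+$ requires, nor the inducedness and parity conditions.

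There is also a secondary soft spot: ``rerouting along any residual chord'' to make the union of three paths induced is not harmless, since rerouting changes path lengths and can destroy the parity configuration your case split depends on (and when $uv\in E(G)$ the edge $uv$ is itself a chord you cannot reroute away). So even the reduction to an induced theta in Case~A needs a careful argument. To repair the proof you would need to replace both cases with a quantitative analysis of how the components of $G-\{u,v\}$ attach to the cut -- in effect re-deriving the endblock/edge-counting argument of the paper -- or find some other mechanism that guarantees a second good cycle or a $K_4^+$ when the theta you build is not bipartite.
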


\begin{proof}
Let $G$ satisfy the hypotheses of the theorem, and let $L$ be a $k$-assignment
for $G$. Let $S$ be a 2-cut in $G$.  Let $G_S:=G-S$\aside{$G_S$}.
Now every endblock $B$ of $G_S$ has order at least $k-1$, since each
vertex $v$ of $B$ has $d_G(v)=k$ and $v$ has at most two edges to $S$.
Further, if an endblock $B$ is non-Gallai, then $B$ contains an even cycle with at most one
chord.\footnote{Recall that Rubin's Block Lemma~\cite{ERT} says 
a block contains such a cycle if and only if it is non-Gallai.}  
We call such a cycle a \Emph{good cycle}. 
If $G_S$ has at least two endblocks each of which contains a {good
cycle}, then Lemma~\ref{good-cycle-lem} implies that $G$ is $L$-swappable. 
So we assume that at most one endblock of $G_S$ contains a good cycle.
 Thus, at most one endblock of $G_S$ is non-Gallai.

\begin{clm}$G_S$ has at most two Gallai endblocks.
\end{clm}
\begin{clmproof}
Observe that every Gallai endblock $B$ is regular of degree either $k-1$ or $k-2$,
since $|S|=2$.  Further, every such $B$ sends at least $k-1$ edges to $S$.
This holds because either (i) $B$ is regular of
degree $k-1$, so $B$ has at least $k$ vertices, and at least $k-1$ of these
each send an edge to $S$, or (ii) $B$ is regular of degree $k-2$, so $B$ has
at least $k-1$ vertices, and at least $k-2$ of these each sends two edges to $S$. 

Suppose, contrary to the claim, that $S$ contains at least 3 Gallai endblocks,
$B_1$, $B_2$, $B_3$.
As noted above, each $B_i$ sends $S$ at least
$k-1$ edges.  So
$3(k-1)\le 2k$; thus, $k=3$.  Further, $G_S$ has no other
endblocks.  But now some $B_i$ is its own component, so it contains no
cut-vertex, and thus sends more edges to $S$ than counted above, which gives a
contradiction.  
\end{clmproof}

\begin{clm}$G_S$ has exactly one non-Gallai endblock; call it $B_0$.
\end{clm}
\begin{clmproof}
Suppose $G_S$ has no non-Gallai endblocks. Since $G_S$ has at most two (Gallai)
endblocks, each endblock is its own component.  So each endblock sends $S$ at least
$\min\{2\cdot(k-1),1\cdot k\}=k$ edges, for a total of at least $2k$ edges to $S$. 
If either component of $G_S$ is $(k-2)$-regular, then $S$ has too many incident
edges, so we get a contradiction.  Thus, each endblock is $K_k$.  If either
vertex in $S$ has at least 2 neighbors in each component of $G_S$, then
we are done by Lemma~\ref{good-cycle-lem}.  So assume that each vertex of
$S$ sends one edge to one component and sends $k-1$ edges to the other.
Now $G$ contains a copy of $K_4^+$, and we are done by Lemma~\ref{K4p-lem}.
Thus, instead $G_S$ has exactly one non-Gallai endblock; call it $B_0$.
\end{clmproof}
\smallskip

\textbf{Case 1: $\bm{G_S}$ has exactly one Gallai endblock; call it $\bm{B_1}$.} If
$B_1$ is $(k-1)$-regular, then some $v\in S$ has at least two neighbors in
$B_1$, but does not dominate $B_1$.  Thus, $B_1+v$ contains a good cycle, and
so does $B_0$.  Now we are done by Lemma~\ref{good-cycle-lem}.  So assume
instead that $B_1$ is $(k-2)$-regular and that $B_1$ has order
at least $k-1$.  In fact, each vertex of $S$ is adjacent to all of $B_1$.  
Now counting edges shows that $B_1=K_{k-1}$, each vertex of $S$ is adjacent to
all of $B_1$, and $S$ is an independent set.  Hence, $G$ contains a copy of
$K_4^+$, and we are done by Lemma~\ref{K4p-lem}.

\textbf{Case 2: $\bm{G_S}$ has exactly 2 Gallai endblocks, $\bm{B_1}$ and $\bm{B_2}$.} 
Suppose that $B_1$ or $B_2$ is its own component; by symmetry, say that it is
$B_1$.  If $B_1$ is $(k-1)$-regular, then some vertex $x\in S$ has at least two
neighbors in $B_1$, but does not dominate it.  So $B_1+x$ and $B_0$ each contain
good cycles, and we are done by Lemma~\ref{good-cycle-lem}.  
If $B_1$ is $(k-2)$-regular, then it sends at least $2(k-1)$ edges to $S$.
So the total number of edges incident to $S$ is at least
$2(k-1)+(k-1)+1=3k-2>2k$, a contradiction.  

Thus, $B_1$ and $B_2$ are in the same component $G_2$ of $G_S$ and 
$B_0$ is its own component $G_1$.  Now $S$ sends $2(k-1)$ edges to
non-cut-vertices in endblocks of $G_2$.  This implies, by counting edges to
$S$, that every other vertex of $G_2$ is non-adjacent to every vertex in $S$.  

If some vertex $x\in S$ sends at least two edges to $B_1$, then (since
$x$ sends no edges to the cut-vertex in $B_1$) $G_S$ has a good cycle in $B_0$
and another good cycle in $B_1+x$, so we are done by
Lemma~\ref{good-cycle-lem}.  The same is true if some $x\in S$ sends at least
two edges to $B_2$.  So assume that each $x\in S$ sends at most one edge to
$B_1$ and one edge to $B_2$.  Since $B_1$ and $B_2$ each receive at least $k-1$
edges from $S$, this implies that $k=3$ and each $x\in S$ sends exactly one
edge to each of $B_1$ and $B_2$.  Now $G_2+x$ must contain a good cycle, unless
$B_1=B_2=K_2$ and $G_2+x$ is an odd cycle. But in this case, each vertex of
$G_2$ that is not a non-cut-vertex of $B_1$ or $B_2$
has too few incident edges, a contradiction.
\end{proof}

\begin{lem}
For $k\geq 5$, if $G$ is $k$-regular with connectivity 3, then $G$ is $k$-swappable.  
\label{connectivity3-main-lem}
\end{lem}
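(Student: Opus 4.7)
The plan is to exhibit an induced subgraph of $G$ belonging to the catalog of degree-swappable graphs assembled in Section~\ref{degree-swap-sec}, and then apply the Key Lemma. Fix a 3-cut $S = \{s_1, s_2, s_3\}$ of $G$ and set $G_S := G - S$. Since $G$ is $k$-regular with $k \ge 5$, every vertex of $G_S$ has degree at least $k-3 \ge 2$ in $G_S$, so every endblock $B$ of $G_S$ satisfies $|V(B)| \ge k-2 \ge 3$. Because $G$ has connectivity $3$, $G_S$ is disconnected and so has at least two endblocks.

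First I would dispose of the easy subcase: if at least two endblocks of $G_S$ are non-Gallai, each contains a good cycle by Rubin's Block Lemma. Distinct blocks share at most one vertex and no edges, so these two good cycles are vertex-disjoint and have no edges between them in $G_S$; together with a shortest connecting path in $G$ they form a configuration to which Lemma~\ref{good-cycle-lem} directly applies, showing $G$ is degree-swappable.

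The body of the proof then addresses the case where at most one endblock of $G_S$ is non-Gallai, so every other endblock is a clique $K_m$ or an odd cycle $C_\ell$. For this I would use a counting argument based on the fact that the number of edges between $S$ and $G_S$ is at most $3k$. A Gallai endblock contributes a substantial number of edges to this count: a non-component $B = K_m$ contributes $(m-1)(k-m+1)$ edges to $S$, a whole-component $K_m$ contributes $m(k-m+1)$ edges, and an odd-cycle endblock $C_\ell$ contributes at least $(\ell-1)(k-2)$ edges. For $k \ge 5$, these contributions force the Gallai endblocks to be few in number and of only a handful of types; in essence, $G_S$ consists of at most one non-Gallai endblock together with a small number of large cliques (typically $K_k$ or $K_{k-1}$) attached to $S$ in restricted patterns.

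Within each remaining configuration I would then exhibit an induced degree-swappable subgraph of $G$: depending on the case, either a second good cycle built from a small Gallai endblock together with one or two vertices of $S$ (so Lemma~\ref{good-cycle-lem} applies), an induced bipartite theta graph (Lemma~\ref{bipartite-lem}), an induced $K_4^+$ (Lemma~\ref{K4p-lem}), or an induced $W_4$ (Lemma~\ref{W4-lem}) arising when an $S$-vertex has four neighbors on a short cycle in $G_S$. The main obstacle is the tightest configurations, in which every Gallai endblock is a maximal clique attached very sparsely to $S$; there I must carefully combine the clique structure with the edges from $S$ to $G_S$ (and possibly the edges among $S$) to produce an induced subgraph from the catalog, taking care that any cycle constructed through $S$ remains induced with at most one chord. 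This detailed case analysis closely parallels the connectivity-$2$ argument in Theorem~\ref{connectivity2}, with the extra flexibility afforded by the third vertex of $S$ for $k \ge 5$ being precisely what makes the desired induced subgraph always findable.
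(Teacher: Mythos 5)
Your setup matches the paper's: fix a 3-cut $S$, analyze the endblocks of $G_S:=G-S$, count edges from $G_S$ to $S$ (at most $3k$), and reduce to exhibiting an induced member of the Section~\ref{degree-swap-sec} catalog via the Key Lemma; the two-non-Gallai-endblock subcase is dispatched exactly as in the paper. But there is a genuine gap: the entire case analysis that constitutes the body of the paper's proof is only promised, not performed. You say the counting forces ``a small number of large cliques \ldots attached to $S$ in restricted patterns'' and that in each remaining configuration you would ``carefully combine the clique structure with the edges from $S$'' to produce a catalog member, explicitly flagging the tightest configurations as the main obstacle --- but resolving those configurations \emph{is} the lemma. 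Nothing in your counting by itself guarantees that an induced good cycle, $K_4^+$, or theta graph exists; the paper has to construct one by hand in each case. Concretely: (i) ruling out an odd-cycle endblock of length at least $5$ does not follow from your bound $(\ell-1)(k-2)\le 3k$ alone when $5\le k\le 8$ (e.g.\ $k=5$, $\ell=5$ gives $12\le 15$); the paper must add the at least $k$ edges sent to $S$ by a different component to reach $5k-8>3k$. (ii) In the all-Gallai case, the paper first argues some endblock is a whole component, pins it down to $K_{k-2}$, $K_{k-1}$, or $K_k$, and then for $B_1=K_k$ builds an induced $K_4^+$ using the facts that each vertex of $B_1$ has a \emph{unique} neighbor in $S$ and that an induced $x,z$-path exists through $S\cup V(G_2)$ --- a specific construction, not a routine consequence of edge counts. (iii) In the one-non-Gallai-endblock case with the Gallai endblock $B_2$ of order $k-1$ or $k-2$, the paper must first prove $G_2=B_2$ (via a separate counting contradiction) before locating either a second good cycle $B_2+x$ or a $K_4^+$. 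None of these steps appears in your proposal.

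Two smaller points. First, for $k\ge 5$ the paper needs only good cycles (Lemma~\ref{good-cycle-lem}) and $K_4^+$ (Lemma~\ref{K4p-lem}); the bipartite theta graph and $W_4$ enter only in the $k=4$ and 4-connected arguments, and a $W_4$ obtained as ``an $S$-vertex with four neighbors on a short cycle'' need not be induced, so invoking Lemma~\ref{W4-lem} that way would require extra care. Second, two good cycles lying in distinct endblocks need not be vertex-disjoint (the endblocks may share a cut-vertex); this is harmless for Lemma~\ref{good-cycle-lem}, which allows one common vertex, but the claim as stated is slightly off.
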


\begin{proof}
Let $G$ satisfy the hypotheses of the lemma, and 
let $L$ be a $k$-assignment for $G$. Let $S$ be a minimal 3-cut of
$G$, and let $G_S:=G-S$\aside{$G_S$}. 

\setcounter{clm}{0}
\begin{clm}
\label{clm3}
Every Gallai endblock $B$ of $G_S$ is regular of degree $k-1$, $k-2$, or $k-3$
and sends at least $k-1$ edges to $S$.  If $B$ is a component of $G_S$,
then $B$ sends at least $k$ edges to $S$.
\end{clm}

\begin{clmproof}
Since $G$ is $k$-regular and $|S|=3$, every non-cut-vertex in an endblock $B$
sends at most 3 edges to $S$, so sends at least $k-3$ edges to vertices of $B$.
Since $B$ is a Gallai endblock, $B$ is
regular.  And since $G$ is 3-connected, some non-cut-vertex of $B$ 
sends an edge to $S$.  
Thus, $B$ is regular of degree $k-1$, $k-2$, or $k-3$.  

Note that every Gallai endblock $B$ of
$G_S$ sends at least $k-1$ edges to $S$. This
is because either (i) $B$ is $(k-1)$-regular, so at least $k-1$ vertices of $B$
each send 1 edge to $S$, or (ii) $B$ is $(k-2)$-regular, so at least
$k-2$ vertices of $B$ each send 2 edges to $S$, or (iii) $B$ is
$(k-3)$-regular, so at least $(k-3)$ vertices of $B$ each send 
3 edges to $S$. Thus, the number of edges that $B$ sends to $S$ is at
least $\min\{1\cdot(k-1),2\cdot(k-2),3\cdot(k-3)\}=k-1$.  Further, if a component of
$G_S$ consists of a single block, say $B$, then a similar computation
shows that the number of edges $B$ sends to $S$ is at least $k$.
\end{clmproof}

\begin{clm}
\label{clm4}
$G_S$ has no endblock $B$ that is an odd cycle with length at least 5.
\end{clm}
\begin{clmproof}
If such a $B$ exists, then it sends at least $4(k-2)$ edges to $S$.  
By Claim~\ref{clm3},  
each component not containing $B$ sends at least $k$ edges to $S$. 
But $4\cdot(k-2)+k=5k-8>3k$, a contradiction.
\end{clmproof}

\begin{clm}
\label{clm5}
$G_S$ has at most 1 non-Gallai endblock and at most 3 Gallai endblocks.
\end{clm}

\begin{clmproof}
If $G_S$ contains at least 2 non-Gallai endblocks, then it contains 2 {good
cycles}; so $G$ is degree-swappable, by Lemma~\ref{good-cycle-lem}.  So instead
$G_S$ contains at most one non-Gallai endblock.  
If $G_S$ has at least 4 Gallai endblocks, then by Claim~\ref{clm3} the number
of edges from $S$ to endblocks of $G_S$ is at least $4(k-1)>3k$, a
contradiction.  So $G_S$ has at most 3 Gallai endblocks.  
\end{clmproof}
\smallskip

We will show $G$ contains an induced degree-swappable subgraph, so we are
done by Lemma~\ref{H-lem}.  Specifically, we will often show $G$
contains two good cycles, and invoke Lemma~\ref{good-cycle-lem}.
\smallskip

\textbf{Case 1: $\bm{G_S}$ has 0 non-Gallai endblocks.}
Let $B_1$ be an endblock that is a whole component $G_1$.
 If $B_1=K_{k-2}$, then each vertex in $S$ is adjacent to all of $B_1$ (and
some pair of vertices in $S$ is non-adjacent), so $G$ contains an induced
$K_4^+$; thus, we are done by Lemma~\ref{K4p}.  So instead
$B_1\in\{K_{k-1},K_k\}$.  Suppose $B_1=K_k$.  Now some vertex $x\in S$ is
adjacent to at least two vertices of $B_1$ and also non-adjacent to some vertex
$y\in B_1$; see Figure~\ref{k5-3conn-noGB}.  (This follows from Pigeonhole and
the fact that each vertex of $S$ has a neighbor in $B_1$.) Choose $z\in
N(y)\cap S$.  Since $x$ and $z$ each have a neighbor in $G_2$, the subgraph
$G[S\cup V(G_2)]$ contains an $x,z$-path.  Note that $N(x)\cap N(z)\cap
V(B_1)=\emptyset$, since each vertex of $B_1$ has a unique neighbor in $S$.
Thus, $G$ contains a copy of $K_4^+$ (possibly using the edge $xz$, rather than
an $x,z$-path through $G_2$), and we are done by Lemma~\ref{K4p}.

Assume instead that $B_1=K_{k-1}$. Now each vertex of $B_1$ has exactly two
neighbors in $S$.  Denote $S$ by $\{x_1,x_2,x_3\}$.  By symmetry, there exist
vertices $y_1,y_2,y_3\in B_1$ such that $y_1$ and $y_2$ are both adjacent to
each of $x_1$ and $x_2$; and $y_3$ is adjacent to $x_2$ and $x_3$.  Now
$\{x_1,y_1,y_2,y_3\}$ induces $K_4-e$.  Since $x_3$ is adjacent to $y_3$ (but not
$y_1$ or $y_2$), again $G$ contains $K_4^+$ as above, so we are done by
Lemma~\ref{K4p}.
\smallskip

\textbf{Case 2: $\bm{G_S}$ has exactly 1 non-Gallai endblock, $\bm{B_1}$.}
Let $G_1$ be the component of $G_S$ containing $B_1$, and let $B_2$ be
a regular endblock of another component $G_2$. 
Since $S$ sends at least $k-1$ edges to $B_2$, by Pigeonhole some vertex 
$x\in S$ sends at least 2 edges to $B_2$. 
If $B_2$ has at least $k$ vertices, then $x$ is not adjacent to all of $B_2$,
since $x$ sends an edge to $G_1$.  Thus, $B_2+x$ is non-Gallai, so it contains
a {good cycle}.   Since $B_2+x$ and $B_1$ each contain a
good cycle, we are done by Lemma~\ref{good-cycle-lem}.

Assume instead that $B_2$ has order $k-1$ or $k-2$.   We will show that $G_2=B_2$. 
Assume the contrary; so $B_2$ has a cut-vertex $y$; see Figure~\ref{k5-3conn-GB}.  
If $B_2$ has order $k-2$, then $d_{G_2}(y)\ge d_{B_2}(y)+1=(|B_2|-1)+1=k-2$; so
$d_{G[S\cup\{y\}]}(y)= k-d_{G_2}(y)\le 2$ 
However, each other vertex of $B_2$ is adjacent to
all of $S$.  So some $x\in S$ is not adjacent to $y$ but sends at least two
edges to $B_2$. Thus, $B_2+x$ 
and $B_1$ each contain a good cycle, so we are
done by Lemma~\ref{good-cycle-lem}.

Assume instead that $B_2$ has order $k-1$; again, see Figure~\ref{k5-3conn-GB}.
The number of edges from $S$ to $B_2$ is at least 
$2(k-2)$,
and $d_{G_2}(y)\ge 1+d_{B_2}(y)=1+((k-1)-1)=k-1$, so
$d_{G[S\cup\{y\}]}(y)\le 1$.
Thus, $S$ sends $y$ at most 1 edge and in total sends all other vertices of
$B_2$ at least $2(k-2)$ edges. If two vertices of $S$ each send only one edge to $B_2$, 
then some non-cut-vertex of $B_2$ has too few incident edges. This is because
each non-cut-vertex of $B_2$ needs an edge from at least 2 vertices of $S$, and
$B_2$ has at least $(k-2)\geq3$ non-cut-vertices.  So at least 2 vertices of
$S$ send at least 2 edges to $B_2$, and at least one of them is non-adjacent to $y$.
We let $x$ denote such a vertex. Now $B_1$ and $B_2+x$ each contain a good cycle,
and we are done by Lemma~\ref{good-cycle-lem}. Thus, we assume that $G_2=B_2$.

\begin{figure}[t!]
\begin{center}
\begin{subfigure}{0.4\textwidth}
\centering
\begin{tikzpicture}[xscale=-1,rotate=90]
\draw[draw=none] (0,-4.95) circle (.5mm); 
\draw[thick] (0,-2) ellipse (1.5cm and 0.7cm) (0,-4) ellipse (2cm and 0.8cm); 
\draw[thick] (0,0) ellipse (0.9cm and 0.9cm); 
\draw[thick] (0.5,-4) ellipse (0.5cm and 0.5cm); 
\draw[thick] (0.5,-4) node[uStyle, draw=white] {$B_2$};
\draw[thick, dotted] (-0.5,-2) -- (0,-2) -- (0.5,-2); 
\draw[thick] 
(-0.5,-2) node[fill=white, shape=circle, scale=0.5,draw] (A) {} 
(0,-2) node[fill=white, shape=circle, scale=0.5,draw] (B) {} 
(0.5,-2) node[fill=white, shape=circle, scale=0.5,draw] (C) {}; 
\draw[thick] (0.5,-2.3) node {$x$};
\draw[thick] (0,-2.3) node {$z$};
\draw[thick] (-0.3,0.2) node {$y$};
\draw[thick] 
(0.3,-0.2) node[fill=white, shape=circle, scale=0.5,draw] (D) {} 
(-0.3,-0.2) node[fill=white, shape=circle, scale=0.5,draw] (E) {} 
(0.5,0.3) node[fill=white, shape=circle, scale=0.5,draw] (F) {}; 
\draw[thick] (F) -- (C) -- (D) (B) -- (E) (D) -- (E) -- (F) -- (D);
\draw[thick] (1.5,0) node {$B_1$} (2,-2) node {$S$} (2.5,-4) node {$G_2$};
\end{tikzpicture}
\caption{\label{k5-3conn-noGB}}
\end{subfigure}
\begin{subfigure}{0.5\textwidth}
\centering
\begin{tikzpicture}[xscale=-1,rotate=-90]
\draw[thick] (0,0) ellipse (1.2cm and 0.9cm) (0,-2) ellipse (1.5cm and 0.7cm) (0,-4) ellipse (2cm and 0.8cm); 
\draw[thick] (-0.3,0) ellipse (0.5cm and 0.5cm); 
\draw[thick] (-0.5,-4) ellipse (0.5cm and 0.5cm); 
\draw[thick] (-0.5,-4) node[uStyle, draw=white] {$B_2$};
\draw[thick] (-0.3,0) node[uStyle, draw=white] {$B_1$};
\draw[thick] (0,-4) node[fill=white, shape=circle, scale=0.5, draw] (A) {}; 
\draw[thick] (0.1,-3.6) node {$y$}; 
\draw[thick] (A) edge[decorate, decoration={snake, amplitude=0.4mm}] (0.8,-4); 
\draw[thick] (-0.5,-2) node[fill=white, shape=circle, scale=0.5, draw] (B) {} (0,-2)
node[fill=white, shape=circle, scale=0.5, draw] (C) {} (0.5,-2) node[fill=white,
shape=circle, scale=0.5, draw] (D) {}; 
\draw[thick, dotted] (B) -- (C) -- (D); 
\draw[thick] (-1.8,0) node {$G_1$} (-2,-2) node {$S$} (-2.5,-4) node {$G_2$};
\end{tikzpicture}
\caption{\label{k5-3conn-GB}}
\end{subfigure}%
\end{center}
\caption{$S$ and the components of $G_S$ when 
(a) $G_1$ is a single Gallai endblock $B_1$, and 
(b) $G_1$ contains a non-Gallai endblock $B_1$ and $G_2$ has more than one endblock.\label{k5-3conn}}
\end{figure}
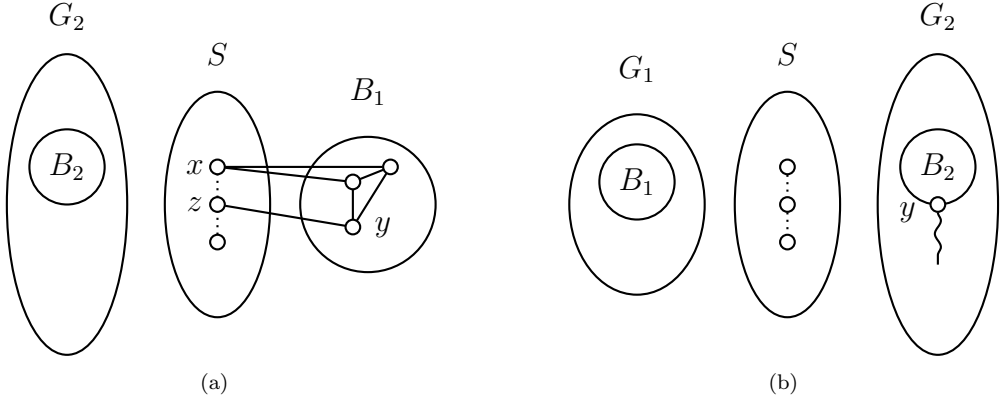

Suppose $B_2$ has $k-1$ vertices.  Now $S$ sends $B_2$ exactly $2(k-1)$
edges.  Thus, some vertex $x\in S$ sends at least two edges to $B_2$
and is not adjacent to all of $B_2$; this is because $(k-1)+2(1) < 2(k-1) <
2(k-1)+1$. (The first term represents the possibility of at
most one vertex of $S$ sending at least two edges to $B_1$ and the third term
represents the possibility of at least two vertices of $S$ dominating $B_1$.)
Now $B_2+x$ contains a {good cycle}, as does $B_1$, so we are done by
Lemma~\ref{good-cycle-lem}. Hence, $B_2$ has $k-2$ vertices, and every vertex
in $S$ is adjacent to all of $B_2$. At least one pair in $S$ is not adjacent;
call it $x_1,x_2$.  Each of $x_1$ and $x_2$ has a neighbor in $G_1$, so $G$
contains a copy of $K_4^+$, and we are done by Lemma~\ref{K4p-lem}.
%
%
\end{proof}

\begin{lem}
If $G$ is a $4$-regular graph with connectivity 3, then $G$ is $k$-swappable.  
\label{connectivity3-4reg-lem}
\end{lem}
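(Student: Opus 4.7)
The plan is to mirror Lemma~\ref{connectivity3-main-lem}: fix a $4$-assignment $L$, take a minimal $3$-cut $S$, set $G_S := G - S$, and exhibit inside $G$ an induced degree-swappable subgraph drawn from Section~\ref{degree-swap-sec} so that Lemma~\ref{H-lem} finishes the argument. Claims~\ref{clm3} and~\ref{clm5} carry over essentially verbatim, with ``degree $1$, $2$, or $3$'' replacing ``degree $k-1$, $k-2$, $k-3$'': each Gallai endblock of $G_S$ is $K_4$, an odd cycle, or $K_2$, sends at least $k-1=3$ edges to $S$, and $G_S$ has at most one non-Gallai endblock.

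The genuine new difficulty is in Claim~\ref{clm4}: its bound $4(k-2) + k > 3k$ degenerates to $12 \not> 12$ when $k = 4$, so odd cycles of length exactly $5$ are no longer ruled out (while $C_{2r+1}$ for $r \geq 3$ still is). I would dispose of $C_5$ endblocks ad hoc. If $B_1 \cong C_5$ is an endblock, the $12$-edge cap on edges from $S$ to $G_S$ forces $B_1$ to not be a whole component, the cut-vertex of $B_1$ to send $0$ edges to $S$, and $S$ to be independent. Its four non-cut-vertices then send at least $8$ edges to $S$, so Pigeonhole yields some $x \in S$ with at least $3$ neighbors on $B_1$; since any three vertices of $C_5$ include a pair at cycle-distance $2$, the set $\{x\} \cup V(B_1)$ induces a $C_4$. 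A second good cycle then comes from the non-Gallai endblock (if one exists) or from the symmetric Pigeonhole argument applied in another component, and Lemma~\ref{good-cycle-lem} finishes the subcase.

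Once $C_5$ endblocks are dispatched, every Gallai endblock is one of $K_4, K_3, K_2$, matching exactly $K_k, K_{k-1}, K_{k-2}$ for $k = 4$. The remaining case analysis runs parallel to Lemma~\ref{connectivity3-main-lem}. In Case~$1$ ($0$ non-Gallai endblocks) I would pick a whole component $G_1 = B_1 \in \{K_4, K_3, K_2\}$ and repeat the three subcase arguments to produce an induced $K_4^+$ using $V(B_1) \cup S$ plus a short path through another component. In Case~$2$ ($1$ non-Gallai endblock $B_1$) I would combine the good cycle in $B_1$ with a good cycle in $B_2 + x$, where $B_2$ is a Gallai endblock of another component and some $x \in S$ has at least two neighbors on $B_2$, or else exhibit a $K_4^+$ when $S$ dominates a small $B_2$.

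The main obstacle is the $C_5$ subcase, since its edge counts are everywhere tight and the standard machinery does not immediately supply a second good cycle in the all-Gallai scenario; I expect most of the new work to go here. A secondary nuisance is the subcase $B_1 = K_{k-2} = K_2$ of Case~$1$: because $|V(B_1)| = 2$ is smaller than the $|V(B_1)| \geq 3$ available for $k \geq 5$, I must verify directly that $S$ contains the non-adjacent pair needed for the $K_4^+$ construction, which reduces to observing that otherwise $G[S \cup V(B_1)] \cong K_5$ would already be a component of $G$, contradicting $3$-connectedness together with the existence of another component in $G_S$.
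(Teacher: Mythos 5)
Your high-level plan is the paper's plan, but the two places where you say the argument ``runs parallel'' or ``carries over'' are exactly where the $k=4$ numerology breaks, and your sketch does not survive them. First, in your Case~1 (no non-Gallai endblock) you propose to ``pick a whole component $G_1=B_1\in\{K_4,K_3,K_2\}$.'' For $k\ge 5$ that choice exists because at most $3k/(k-1)<4$ Gallai endblocks fit against $S$, so at most three endblocks force some component to be a single block; for $k=4$ the bound is $3k/(k-1)=4$, so $G_S$ may have two components each with exactly two endblocks (each sending exactly $3$ edges to $S$), and then there is no single-block component to pick. The paper's proof has a dedicated case for four endblocks, and further cases whose targets are not $K_4^+$ at all: good cycles in $G_i+x$ for a well-chosen $x\in S$, an induced $K_{2,3}$ (a bipartite theta graph, Lemma~\ref{bipartite-lem}), and two induced $4$-cycles meeting in one vertex of $S$. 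Even when a single-block component exists, several $k\ge5$ subcase arguments fail at $k=4$: a vertex of $S$ can dominate a $K_3$ endblock (so $B_2+x=K_4$ is Gallai, not a good cycle), a whole-component $K_3$ can have its three vertices attached to the three distinct pairs of $S$ (so the two vertices of $B_1$ sharing the same pair of $S$-neighbors, which the $k\ge5$ argument uses, need not exist), and a $K_2$ endblock never yields a good cycle of the form $B_2+x$, forcing the $K_{2,3}$/interior-block analysis of the paper's Case 1.3, which your Case~2 sketch omits. So ``repeat the three subcase arguments to produce an induced $K_4^+$'' is a genuine gap, not a routine adaptation.

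Second, the $C_5$ issue. The paper does not treat $C_5$ ad hoc; it strengthens the count: a $C_5$ endblock that is a whole component sends $10$ edges to $S$, and otherwise the four non-cut-vertices send $8$ while another endblock of its component sends at least $k-1=3$, and the other component receives at least $3$ more, exceeding $12$. In particular the sub-case you flag as the crux (``the all-Gallai scenario'') is vacuous: if every endblock is Gallai, the second endblock in the $C_5$'s component is Gallai and $8+3+3>12$ already kills the configuration, so leaving it as ``where most of the new work goes'' leaves nothing proved where an easy count suffices. Your concrete steps there are also not right as stated: the $12$-edge cap does not force the cut-vertex of the $C_5$ to send $0$ edges to $S$ when the other endblock of its component is the (unique) non-Gallai one, and, more importantly, if the three $B_1$-neighbors of $x$ are consecutive on the $C_5$ then $\{x\}\cup V(B_1)$ contains no induced $C_4$ --- the $4$-cycle through the distance-two pair has the chord from $x$ to the middle vertex. (What you do get is an induced $K_4-e$, which is still an even cycle with one chord, so this is repairable, but the claim as written is false.) Finally, invoking Lemma~\ref{good-cycle-lem} at the end requires checking its hypotheses --- the two good cycles must meet in at most one vertex and have essentially no edges between them --- which needs the cycle in $\{x\}\cup V(B_1)$ to avoid the cut-vertex and $x$ to have no neighbors in the other good cycle; none of this is verified in the proposal.
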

\begin{proof}
Let $S$ be a vertex cut of size 3, and let $G_S:=G-S$\aside{$G_S$}.  As in the
proof of Lemma~\ref{connectivity3-main-lem}, if $G_S$ has at least two
non-Gallai endblocks, then we are done by Lemma~\ref{good-cycle-lem}.
So we assume that $G_S$ has at most one non-Gallai endblock.

\setcounter{clm}{0}
\begin{clm}
\label{clm6}
Each Gallai endblock of $G_S$ is $K_2$, $K_3$, or $K_4$.
\end{clm}
\begin{clmproof}
The proofs of Claims~\ref{clm3} and~\ref{clm4} in
Lemma~\ref{connectivity3-main-lem} also work here
essentially unchanged.  So each Gallai endblock is regular of degree $k-1$,
$k-2$, or $k-3$.  (Here $k=4$.)  As before, if $G_S$ has an endblock $B$ that
is an odd cycle with length at least 5, then the number of edges going to $S$
is at least $4(k-2)+k$.  However, we can strengthen this bound as follows.  If
$B$ is its own component, then the number of edges it sends to $S$ is at least $5(k-2)$.
 And if $B$ is not its own component, then some other endblock in its component
sends $S$ at least $k-1$ edges.  Thus, we get
$\min\{4\cdot(k-2)+(k-1),5(k-2)\}+k>3k$, again a contradiction.
\end{clmproof}

\textbf{Case 1: $\bm{G_S}$ has exactly 1 non-Gallai endblock
$\bm{B_1}$.}\aside{$B_1$, $G_1$}
Let $G_1$ be the component of $G_S$ containing $B_1$.  Let $B_2$ be a Gallai
endblock in some other component $G_2$.\aside{$B_2$, $G_2$}

\textbf{Case 1.1: $\bm{B_2}$ is $\bm{K_4}$.} 
Now $G_2+x$ contains a good cycle, for
some $x\in S$, as follows (so we are done by Lemma~\ref{good-cycle-lem}).
If some vertex $x$ of $S$ sends at least 2 edges to $B_2$, then $B_2+x$ is
non-Gallai, since $x$ is also non-adjacent to some vertex in
$B_2$ (this includes the case that $G_2=B_2$). Thus, $x+B_2$ contains a good cycle. Otherwise, each vertex of $S$
sends one edge to $B_2$ and some vertex $x$ sends an edge to another endblock of
$G_2$.  Now $x+G_2$ is 2-connected and irregular (so it is non-Gallai and contains a good cycle). 

\textbf{Case 1.2: $\bm{B_2}$ is $\bm{K_3}$.}  
Suppose that no
vertex of $S$ sends edges to exactly 2 vertices of $B_2$.  Now $G_2\ne B_2$, two
vertices of $S$ each send a single edge to $B_2$, and at least one of them, call
it $x$, sends an edge to another endblock of $G_2$.  Now $G_2+x$ is 2-connected
and irregular; thus, it contains a good cycle. So we assume some vertex $x\in
S$ sends exactly two edges to $B_2$. Now $B_2+x$ is non-Gallai and contains a
good cycle.  Since $B_1$ is non-Gallai, we are done by
Lemma~\ref{good-cycle-lem}.

\textbf{Case 1.3: $\bm{B_2}$ is $\bm{K_2}$.}
By symmetry, we assume that every Gallai endblock in $G_2$ is $K_2$.
If $G_2=K_2$, then $G$ contains a copy of $K_4^+$ (with its non-adjacent
3-vertices in $S$), so we are done by Lemma~\ref{K4p-lem}.  If $G_2$ contains
at least 3 endblocks, then each non-cut-vertex in each of these endblocks is
adjacent to all of $S$, so $G$ contains an induced $K_{2,3}$ (with two vertices
in $S$), which is a bipartite theta graph, so we are done by
Lemma~\ref{bipartite-lem}.  Thus, $G_2$ has exactly two endblocks.  If
$G_2=P_3$, then some $x$ in $S$ is adjacent to all vertices in $G_2$, so
$G_2+x$ contains a good cycle, and we are done by Lemma~\ref{good-cycle-lem}.
If some interior block $B_3$
of $G_2$ is non-Gallai (so contains a good
cycle), then each $x$ in $S$ has a path to $B_3$ in $G_2$ (and sends at
most one edge to $B_3$), so we are done by considering $B_1$, $B_3$, and a
shortest path between them.  Instead, assume that every interior block of $G_2$
is Gallai.  Since $G_2$ only has two endblocks, some $x\in S$ sends an
edge to the interior block $B_3$.  Again, $G_2+x$ contains a good cycle.

\textbf{Case 2: $\bm{G_S}$ has 0 non-Gallai endblocks.}

\textbf{Case 2.1: $\bm{G_S}$ has at least 4 endblocks.}
As in the proof of Lemma~\ref{connectivity3-main-lem}, each endblock sends at
least $k-1=3$ edges to $S$.  So the number of endblocks in $G_S$ is at most
$|S|k/(k-1)=3\cdot4/3=4$.  Suppose that $G_S$ has 4 Gallai endblocks. 
If some endblock is a $K_3$, then it sends $S$ at least 4 edges, so $S$ has too
many incident edges (as above), a contradiction.  
Now $G_S$ has only 2 components, and no component is a single endblock; otherwise,
$S$ sends that component at least 4 edges, and $4+3\cdot3>4|S|$, a contradiction.
So let $B_1, B_2$ be endblocks of $G_1$ and let $B_3,B_4$ be 
endblocks of $G_2$, with $B_1,B_2,B_3,B_4\in\{K_2,K_4\}$. 
Similarly, $S$ sends edges only to non-cut-vertices
of $B_1,B_2,B_3,B_4$. 
This implies that if some vertex $x\in S$ sends two edges each to $G_1$ and
$G_2$, then $x+G_1$ and $x+G_2$ each contain a good cycle, and we are done by
Lemma~\ref{good-cycle-lem}. That is because, for each $i\in[2]$, either some
endblock of $G_i$ is $K_4$ or both endblocks of $G_i$ are $K_2$ and there is an
interior endblock $B_5$ that is irregular. Now by Pigeonhole, some $x$ in
$S$ must send two edges each to $G_1$ and $G_2$, so we are done.

\textbf{Case 2.2: $\bm{G_S}$ has a component that is neither $\bm{K_3}$ nor $\bm{K_4}$.}
By Case 2.1, $G_S$ has at most 3 endblocks.
So let $B_1$ be a block of $G_S$ that is a whole component of $G_1$. 
If $B_1$ is an odd cycle of length 5 or more, then $B_1$ sends $S$ at least 10
edges, and $G_S-B_1$ sends $S$ at least $k-1=3$ edges, but $10+3>4|S|=12$, a
contradiction.
If $B_1=K_2$, then every vertex of $S$ is adjacent to all of $B_1$, and some pair
of vertices in $S$ is non-adjacent.  Thus, $G$ contains a $K_4^+$, and we are
done by Lemma~\ref{K4p-lem}.  

Instead assume that $B_1\in\{K_3,K_4\}$.  This implies that some vertex
$x\in S$, sends at least 2 edges to $B_1$ and is not adjacent to all of $B_1$.  
If $x$ sends an edge to two endblocks in $G_2$, then $G_2+x$ contains a good
cycle (either some endblock of $G_2$ is not $K_2$, or else $G_2=P_3$, or $G_2$
contains an interior block that is not $K_2$).
Thus, $x$ sends an edge to at most one endblock of $G_2$.
So if $G_2$ contains two endblocks, call them $B_2$ and $B_3$, then we may
assume that $x$ sends no edges to $B_3$ and that $B_3\neq K_2$. This implies
that some vertex $y\in S$ sends at least 2 edges to $B_3$ and is not adjacent
to all of $B_3$, so $B_3+y$ contains a good cycle.  Now we are done, by
considering the good cycles in $B_1+x$ and in $B_3+y$.
Hence $G_2$ has a single endblock; that is, $G_2=B_2$.
By the previous paragraph, $G_2\ne K_2$, a contradiction.

\textbf{Case 2.3: $\bm{G_S}$ has a component that is $\bm{K_3}$ or has 3 components.}
Suppose that $B_1=B_2=K_3$. If some vertex $x\in S$ sends exactly 2 edges to each
$G_i$, then each $G_i+x$ contains a {good cycle}, so we are done by
Lemma~\ref{good-cycle-lem}.   But such $x$
must exist because (a) each $B_i$ gets 3 edges from $S$, (b) $S$ sends out at
most 12 edges, and (c) we cannot write 6 as a sum of 3 terms, each 1 or 3.
Assume instead that $B_1=K_4$.  If $G_S$ has a third component $G_3$,
then $G_3$ is a single block $B_3$, and counting edges from $S$ shows that 
$B_1=B_2=B_3=K_4$ and $S$ is an independent set.  Now $G$ contains a bipartite
theta graph (take 2 vertices in $S$ and a neighbor of each in each component). 
Assume instead that $G_S$ has only 2 components. 

By symmetry, we assume that $B_1=K_4$ and $B_2=K_3$.
If some vertex $x_i\in S$ sends two edges to each of $B_1$ and $B_2$, then
we are done; so assume this does not happen.  By symmetry, we assume that
$d_{B_2}(x_1)=3$, $d_{B_2}(x_2)=2$, and $d_{B_2}(x_3)=1$, so
$d_{B_1}(x_1)=d_{B_1}(x_2)=1$ and $d_{B_1}(x_3)=2$;  
see Figure~\ref{K3&K4}.
This implies that
$x_2x_3\in E(G)$.  But now $x_2$ and $x_3$ lie in a 4-cycle with two vertices of
$B_1$, and they also lie in a 4-cycle with two vertices of $B_2$.  The union of
these 4-cycles is a bipartite theta graph; so we are done, by
Lemma~\ref{bipartite-lem}.

\begin{figure}[t!]
\begin{center}
\begin{subfigure}{0.5\textwidth}
\centering
\begin{tikzpicture}[rotate=90]
\tikzstyle{uStyle}=[shape = circle, minimum size = 5.5pt, inner sep = 0pt,
outer sep = 0pt, draw, fill=white]
\tikzstyle{lStyle}=[shape = circle, minimum size = 5.5pt, inner sep = 0pt,
outer sep = 0pt, draw=none, fill=none]
\tikzset{every node/.style=uStyle}
\draw[draw=none] (0,0.95) circle (.5mm); 
\draw[thick, white] (0,-4) ellipse (2.2cm and 1cm); 
\draw[thick] (0,-2) ellipse (1.5cm and 0.7cm); 
\draw[thick] (0.5,0.5) node (1) {} (1) -- ++(0,-1) node (2) {} (2) -- ++(-1,0) node (3) {} (3) -- ++(0,1) node (4) {} (3) -- (1) -- (4) -- (2); 
\draw[thick] (0.5,-3.5) node (a) {} (a) -- ++(-0.5,-1) node (b) {} (b) -- ++(-0.5,1) node (c) {} (a) -- (c); 
\draw[thick] (0,-2) -- (-0.5,-2); 
\draw[thick] (-0.5,-2) node (x2) {} (0,-2) node (x3) {} (0.5,-2) node (x1) {}; 
\draw[thick] (0.9,-2) node[lStyle] {$x_1$};
\draw[thick] (0.3,-1.7) node[lStyle] {$x_3$};
\draw[thick] (-0.9,-2) node[lStyle] {$x_2$};
\draw[thick] (x1) edge[bend right=30] (1) (2) -- (x3) -- (3) (x2) edge[bend left=30] (4) (a) -- (x1) -- (c) (x1) edge[bend left=60] (b) (x2) -- (c) (x3) -- (a) (x2) edge[bend right=60] (b);
\draw[thick] (1.5,0) node[lStyle] {$B_1$} (2,-2) node[lStyle] {$S$} (1.5,-4)
node[lStyle] {$B_2$};
\end{tikzpicture}
\caption{\label{K3&K4}}
\end{subfigure}
\begin{subfigure}{0.4\textwidth}
\centering
\begin{tikzpicture}[rotate=90]
\tikzstyle{uStyle}=[shape = circle, minimum size = 5.5pt, inner sep = 0pt,
outer sep = 0pt, draw, fill=white]
\tikzstyle{lStyle}=[shape = circle, minimum size = 5.5pt, inner sep = 0pt,
outer sep = 0pt, draw=none, fill=none]
\tikzset{every node/.style=uStyle}

\draw[draw=none] (0,0.95) circle (.5mm); 
\draw[thick, white] (0,-4) ellipse (2.2cm and 1cm); 
\draw[thick] (0,-2) ellipse (1.5cm and 0.7cm); 
\draw[thick] (0.5,0.5) node (1) {} (1) -- ++(0,-1) node (2) {} (2) -- ++(-1,0) node (3) {} (3) -- ++(0,1) node (4) {} (3) -- (1) -- (4) -- (2); 
\draw[thick] (0.5,-3.5) node (a) {} (a) -- ++(0,-1) node (b) {} (b) -- ++(-1,0) node (c) {} (c) -- ++(0,1) node (d) {} (c) -- (a) -- (d) -- (b); 
\draw[thick] (-0.5,-2) -- (0,-2) -- (0.5,-2); 
\draw[thick] (-0.5,-2) node (x2) {} (0,-2) node (x3) {} (0.5,-2) node (x1) {}; 
\draw[thick] (0.9,-2) node[lStyle] {$x_1$};
\draw[thick] (0.2,-1.7) node[lStyle] {$x_3$};
\draw[thick] (-0.9,-2) node[lStyle] {$x_2$};
\draw[thick] (x1) -- (2) (x1) edge[bend right=30] (1) (x3) -- (3) (x2) edge[bend left=30] (4) (x3) -- (a) (x1) edge[bend left=30] (b) (x2) -- (d) (x2) edge[bend right=30] (c);
\draw[thick] (1.5,0) node[lStyle] {$B_1$} (2,-2) node[lStyle] {$S$} (1.5,-4)
node[lStyle] {$B_2$};
\end{tikzpicture}
\caption{\label{two-K4}}
\end{subfigure}%
\end{center}
\caption{$S$ and the components of $G_S$ when 
(a) $B_1=K_4$ and $B_2=K_3$, and 
(b) $B_1=B_2=K_4$.\label{k5-3conn-B}}
\end{figure}
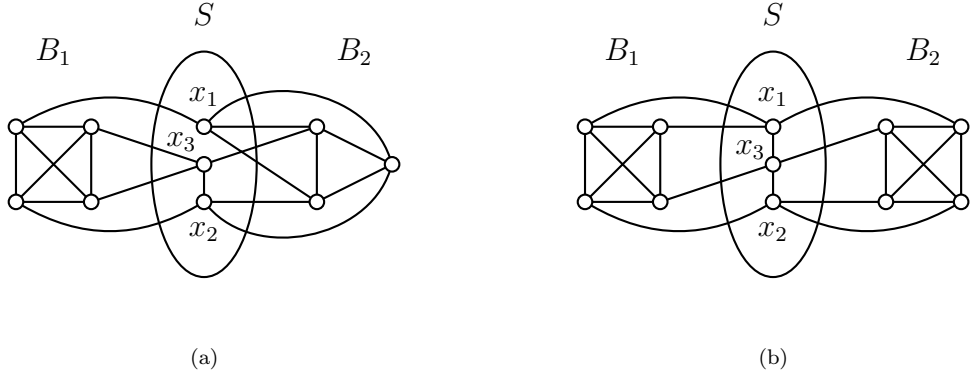

\textbf{Case 2.4: $\bm{G_S}$ has 2 components and both are $\bm{K_4}$.}
Suppose that $B_1=B_2=K_4$ (and $G_S$ has only two components).
Denote $S$ by $x_1,x_2,x_3$.  By symmetry (and counting edges), we assume that
$d_{B_1}(x_1) = d_{B_2}(x_2) = 2$ and
$d_{B_2}(x_1) = d_{B_1}(x_2) = d_{B_1}(x_3) = d_{B_2}(x_3)=1$.
This implies that $x_1x_3,x_2x_3\in E(G)$ and $x_1x_2\notin E(G)$; see Figure~\ref{two-K4}.
If $y_1$ is a neighbor of $x_1$ in $B_1$ and $y_2$ is a neighbor of $x_3$ in
$B_1$, then $\{x_1,x_3,y_1,y_2\}$ induces a 4-cycle in $V(B_1)\cup S$.
Similarly, there exists an induced 4-cycle in $V(B_2)\cup S$ that uses edge
$x_2x_3$.  Each of these 4-cycles is a good cycle, and they intersect in a
single vertex $x_3$.  So we are done.
\end{proof}

Now we handle the case when $G$ is 3-regular and has connectivity 3,
and two vertices of $G$ have distinct lists.

\begin{lem}
Let $G$ be 3-connected and 3-regular, but not $K_4$. Let
$L$ be a 3-assignment for $G$. If there exist $v,w_1\in V(G)$ with 
$L(w_1)\ne L(v)$, then $G$ is $L$-swappable.
\label{connectivity3-3reg-lem}
\end{lem}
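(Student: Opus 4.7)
The plan is to leverage the list asymmetry at an edge of $G$ to produce a mixing set via Lemma~\ref{missing-lem}, and then argue that every $L$-coloring is $L$-equivalent to a coloring in that set.

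First, by connectedness of $G$, I may assume $vw_1\in E(G)$ (otherwise walk along a $v,w_1$-path to find adjacent vertices with differing lists). Since $G$ is $3$-connected, $3$-regular, and $G\ne K_4$, we have $|V(G)|\ge 6$, and for each edge $e$ the graph $G-e$ is $2$-connected (using that $3$-connectivity implies $2$-edge-connectivity, via the standard no-bridge argument) and has both degree-$2$ and degree-$3$ vertices, so it is a single non-Gallai block and hence degree-choosable. Lemma~\ref{overlap-lem} then lets me assume $|L(u)\cap L(u')|\ge 2$ on every edge, and in particular $|L(v)\cap L(w_1)|=2$. Write $L(v)=\{a,b,c\}$ and $L(w_1)=\{a,b,d\}$. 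Applying Lemma~\ref{missing-lem} to $(x,w)=(w_1,v)$ and to $(v,w_1)$ shows that $\LL_{w_1,d}$ and $\LL_{v,c}$ are each nonempty and mix.

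Next, since $G\ne K_4$ is $3$-regular, $N(v)=\{w_1,w_2,w_3\}$ cannot induce a triangle (else $\{v\}\cup N(v)$ would be $K_4=G$), so some pair in $N(v)$ is non-adjacent. In the main case, $w_1$ has a non-neighbor in $N(v)$, say $w_2$. Then $G-\{w_1,w_2\}$ is connected by $3$-connectivity, and inclusion–exclusion from $|L(v)\cap L(w_j)|\ge 2$ for $j=1,2$ gives $L(w_1)\cap L(w_2)\ne\emptyset$. Lemma~\ref{common-lem}(2) applied to $(v;w_1,w_2)$ with $\beta:=d\in L(w_1)\setminus L(v)$ and each $\alpha\in L(w_1)\cap L(w_2)$ shows that $(\LL_{w_1,\alpha}\cap\LL_{w_2,\alpha})\cup\LL_{w_1,d}$ mixes, and varying $\alpha$ enlarges the mixing set. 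In the complementary wheel-like case, $w_1$ is adjacent to both $w_2$ and $w_3$ with $w_2w_3\notin E(G)$, so $\{v,w_1,w_2,w_3\}$ induces $K_4-w_2w_3$ and $N(w_1)=\{v,w_2,w_3\}$; here I apply Lemma~\ref{common-lem} to the non-adjacent pair $\{w_2,w_3\}$ with common neighbors $v$ and $w_1$, propagating the asymmetry through the edges $w_1w_2$ and $w_1w_3$ (at least one of which is now a discordant edge, allowing a restart of the argument).

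Finally, for each $\vph\in\LL$ outside the established mixing set I produce an $L$-valid Kempe-swap sequence placing $\vph$ inside. The most natural attempt is a $\vph(w_1),d$-swap at $w_1$: its bichromatic component avoids $v$ because $d\notin L(v)$, and when that component is $\{w_1\}$ the swap is immediately $L$-valid and moves $\vph$ into $\LL_{w_1,d}$. When the component is larger, I instead try the symmetric $\vph(v),c$-swap at $v$, or pass through a common coloring of $\LL_{v,c}\cap\LL_{w_1,d}$ obtained by greedy extension of $\vph(v)=c,\vph(w_1)=d$ to $G-\{v,w_1\}$, whose reduced lists meet the degree conditions needed for a proper extension.

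I expect the hardest part to be this last step: the case analysis showing that every $L$-coloring reaches the mixing set via $L$-valid Kempe swaps. The bichromatic component of the natural swap at $w_1$ or at $v$ can extend into vertices whose lists do not support the required interchange, and each such obstruction must be resolved by an auxiliary swap. The wheel-like case is particularly delicate, since Lemma~\ref{common-lem}(2) is no longer directly applicable with $w_1$ as one of the common-neighbor pair, so the list asymmetry must be routed through the non-adjacent pair $\{w_2,w_3\}$ or through a different discordant edge incident to $w_1$.
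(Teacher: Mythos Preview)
Your setup is correct and mirrors the paper: reducing to $vw_1\in E(G)$, showing $G-e$ is degree-choosable so Lemma~\ref{overlap-lem} applies, and using Lemmas~\ref{missing-lem} and~\ref{common-lem} to build mixing sets. Two minor points: your ``wheel-like'' case is actually vacuous, since if $w_1w_2,w_1w_3\in E(G)$ then $N(w_1)=\{v,w_2,w_3\}$ and $\{w_2,w_3\}$ is a 2-cut, contradicting 3-connectivity; so $G[w_1,w_2,w_3]$ has at most one edge. Also, you never explicitly argue that your mixing sets cover $\LL$; the paper does this by Pigeonhole: every $\vph$ either uses a color outside $L(v)$ on some $w_i$, or repeats a color on two (necessarily non-adjacent) $w_i,w_j$.

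The genuine gap is your final step, which you yourself flag as incomplete. Your plan is to take an arbitrary $\vph$ and swap it into $\LL_{w_1,d}$ or $\LL_{v,c}$, but the obstructions you anticipate are real and do not dissolve under ``auxiliary swaps'' without substantial new ideas. The paper instead partitions $\LL$ into six pieces $\AA_1,\AA_2,\AA_3,\BB_1,\BB_2,\BB_3$ (where $\AA_i$ records a repeated color on a non-adjacent pair in $N(v)$ and $\BB_j$ records $\vph(w_j)\notin L(v)$) and proves the pieces pairwise mix via Lemma~\ref{common-lem}(2). The hard case is when $N(v)$ is independent and $L(w_2)=L(w_3)=L(v)$, so only $\BB_1$ is nonempty among the $\BB_j$; here showing $\AA_3$ mixes with $\BB_1$ requires two ideas absent from your sketch. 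First, to show $\AA_3$ itself mixes, the paper forms $G'$ by deleting $v$ and identifying $w_2$ with $w_3$, checks $G'$ is 2-connected, and applies Lemma~\ref{extra-lem} to $G'$; the bijection between $L'$-colorings of $G'$ and colorings in $\AA_3$ needs care because swaps in $G'$ must lift to $L$-valid swaps in $G$. Second, to link $\AA_3$ with $\BB_1$, the paper exploits the initial choice of $v$ (made to maximize list discordance in $N(v)$) together with a $u,v$-path in $G-z_1-z_2$ to locate a \emph{second} discordant edge $xy$ away from $N(v)\cup N(w_1)$; the resulting $\LL_{x,\gamma}$ then bridges $\BB_1$ and $\LL_{v,c}$. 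Neither the identification trick nor the auxiliary-path argument is suggested by your outline, and without them the proof does not close.
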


\begin{proof}
Since $G$ is connected, we assume $vw_1\in E(G)$.  
Since $G$ is not a clique, $G$ has an $L$-coloring by the list version of
Brooks' Theorem.  Thus, $\LL\neq\emptyset$. Denote $N(v)$ by $\{w_1,w_2,w_3\}$.
If possible, also choose $v$ so that $L(w_2)\neq L(v)$ or $L(w_3)\neq L(v)$ (or
both); this choice is used only
once, near the end of the proof.  If $w_1w_2,w_2w_3\in E(G)$, then
$\{w_1,w_3\}$ is a vertex cut, which contradicts that $G$ is 3-connected. 
Thus, by symmetry, $\{w_1,w_2,w_3\}$ induces at most one edge.

We note that $G-e$ is degree-choosable for each $e\in E(G)$, as follows. Assume
instead that $G-e$ is a Gallai tree for some $e\in E(G)$. Since $G-e$ is
2-connected, it is is a Gallai block, i.e., a complete graph or an odd cycle.
But this is impossible since $G-e$ is irregular. Thus, $G-e$ is
degree-choosable. By Lemma~\ref{overlap-lem}, this implies that $|L(x)\cap
L(y)|\geq2$ for all $xy\in E(G)$. In particular, $|L(w_i)\cap L(v)|\geq2$ for
each $i\in [3]$.

Let $\AA_1:=\cup_{\alpha\in L(w_1)\cap
L(w_2)}\LL_{w_1,\alpha}\cap\LL_{w_2,\alpha}$\aside{$\AA_1$, $\BB_1$}
\aaside{$\AA_2$, $\BB_2$}{4mm}\aaside{$\AA_3$, $\BB_3$}{8mm}
and $\BB_1:=\cup_{\alpha\in L(w_1)\setminus L(v)}\LL_{w_1,\alpha}$. Define
$\AA_2$, $\BB_2$, $\AA_3$, and $\BB_3$
analogously, as in Table~\ref{partition}. 
Let $\AA:=\cup_{i=1}^3\AA_i$ and $\BB:=\cup_{i=1}^3\BB_i$.\aaside{$\AA$,
$\BB$}{7mm}
Observe that $\LL=\AA\cup \BB$. Moreover, by Lemma~\ref{missing-lem}, $\BB_1$
is nonempty and mixes. Recall from above that $G[w_1,w_2,w_3]$ has at most one edge.

\begin{table}[!h]

\begin{center}
\scalebox{1.3}{
\begin{tabular}{c|c}

$\AA$ & $\BB$ \\
\hline
$\AA_1:\cup_{\alpha\in L(w_1)\cap L(w_2)}\LL_{w_1,\alpha}\cap\LL_{w_2,\alpha}$ & $\BB_1:\cup_{\alpha\in L(w_1)\setminus L(v)}\LL_{w_1,\alpha}$ \\
$\AA_2:\cup_{\alpha\in L(w_1)\cap L(w_3)}\LL_{w_1,\alpha}\cap\LL_{w_3,\alpha}$ & $\BB_2:\cup_{\alpha\in L(w_2)\setminus L(v)}\LL_{w_2,\alpha}$ \\ 
$\AA_3:\cup_{\alpha\in L(w_2)\cap L(w_3)}\LL_{w_2,\alpha}\cap\LL_{w_3,\alpha}$ & $\BB_3:\cup_{\alpha\in L(w_3)\setminus L(v)}\LL_{w_3,\alpha}$ \\

\end{tabular}
}
\end{center}
\caption{Every $L$-coloring is in $\AA$ or $\BB$.\label{partition}}
\end{table}

To help clarify the arguments in this proof, we will often draw an auxiliary
graph that has a vertex for each $\AA_i$ and $\BB_j$ that is non-empty.  (Since
$|L(w_i)\cap L(v)|\ge 2$ for all $i$, we have $|L(w_i)\setminus L(v)|\le 1$.
Thus, each nonempty $\BB_j$ mixes.)  If $\AA_i\cup \BB_j$ mixes, then we draw an
edge between the vertices $\AA_i$ and $\BB_j$.  So, to show that $\LL$ mixes, it
suffices to show that this auxiliary graph is connected.

\textbf{Case 1: $\bm{G[w_1,w_2,w_3]}$ has one edge.} By symmetry between $w_2$
and $w_3$, assume that
$w_1w_3\notin E(G)$. So either $w_2w_3\in E(G)$ or $w_1w_2\in E(G)$. 

First suppose $w_2w_3\in E(G)$. This implies that $\AA_3=\emptyset$. Since
$w_1w_2\notin E(G)$, the sets $\AA_1$ and $\AA_2$ are both nonempty. By
Lemma~\ref{common-lem}(2), $\AA_1\cup \BB_1$ and $\AA_2\cup \BB_1$ both mix;
see Figure~\ref{lem15-fig1}a.  Moreover, for each $i\in\{2,3\}$, if $\BB_i$ is
nonempty, then $\BB_i\cup\AA_{i-1}$ mixes by Lemma~\ref{common-lem}(2). So
$\AA\cup\BB$ mixes; that is, $\LL$ mixes. 

\begin{figure}[!h]
\centering
\begin{tikzpicture}[yscale=-1.2, thick]
\tikzstyle{uStyle}=[shape = circle, minimum size = 5.5pt, inner sep = 0pt,
outer sep = 0pt, draw, fill=white]
\tikzstyle{lStyle}=[shape = circle, minimum size = 5.5pt, inner sep = 0pt,
outer sep = 0pt, draw=none, fill=white]
\tikzset{every node/.style=uStyle}

\draw (1,-1) node (A1) {} (1,-2) node (B1) {};
\draw (2,-1) node (A2) {} (2,-2) node (B2) {};
\draw (3,-2) node (B3) {};

\draw (1,-0.5) node[lStyle] {1};
\draw (2,-0.5) node[lStyle] {2};
\draw (3,-0.5) node[lStyle] {3};
\draw (0.5,-1) node[lStyle] {$\AA_i$};
\draw (0.5,-2) node[lStyle] {$\BB_j$};
\draw (B3) -- (A2) -- (B1) -- (A1) -- (B2);

\draw (2,0) node[lStyle] {(a)};

\begin{scope}[xshift=1.75in]
\draw (1,-2) node (B1) {};
\draw (2,-1) node (A2) {} (2,-2) node (B2) {};
\draw (3,-1) node (A3) {} (3,-2) node (B3) {};

\draw (1,-0.5) node[lStyle] {1};
\draw (2,-0.5) node[lStyle] {2};
\draw (3,-0.5) node[lStyle] {3};
\draw (0.5,-1) node[lStyle] {$\AA_i$};
\draw (0.5,-2) node[lStyle] {$\BB_j$};
\draw (B1) -- (A2) -- (B3) -- (A3) -- (B2);

\draw (2,0) node[lStyle] {(b)};

\end{scope}

\begin{scope}[xshift=3.5in]
\draw (1,-2) node (B1) {};
\draw (2,-1) node (A2) {} (2,-2) node (B2) {};
\draw (3,-1) node (A3) {}; 
\draw (1,-3) node (C1) {};

\draw (1,-0.5) node[lStyle] {1};
\draw (2,-0.5) node[lStyle] {2};
\draw (3,-0.5) node[lStyle] {3};
\draw (0.5,-1) node[lStyle] {$\AA_i$};
\draw (0.5,-2) node[lStyle] {$\BB_j$};
\draw (0.4,-3) node[lStyle,shape=rectangle] {$\LL_{w_1,b}$};
\draw (A2) -- (B1) -- (C1) -- (B2) -- (A3);

\draw (2,0) node[lStyle] {(c)};

\end{scope}

\end{tikzpicture}
{
\captionsetup{width=.8\textwidth}
\caption{(a) The case that $w_2w_3\in E(G)$.
(b) The case that $w_1w_2\in E(G)$ and $\BB_3\ne\emptyset$.
(c) The case that $w_1w_2\in E(G)$, $\BB_3=\emptyset$, and
$\BB_2\ne\emptyset$.
\label{lem15-fig1}}
}
\end{figure}

Instead assume $w_1w_2\in E(G)$. So $\AA_1=\emptyset$. Since $w_2w_3\notin
E(G)$, the sets $\AA_2$ and $\AA_3$ are both nonempty. By
Lemma~\ref{common-lem}(2), $\AA_2\cup\BB_1$ mixes. Now we show that
$\BB_2=\BB_3=\emptyset$. Suppose first that $\BB_3\neq\emptyset$. By
Lemma~\ref{common-lem}(2), $\AA_2\cup \BB_3$ and $\AA_3\cup \BB_3$ both mix; 
see Figure~\ref{lem15-fig1}b.  If
$\BB_2=\emptyset$, then we are done, since $\AA\cup\BB$ mixes. So assume
$\BB_2\neq\emptyset$. Now by Lemma~\ref{common-lem}(2) $\AA_3\cup \BB_2$ mixes.
Thus, $\AA\cup \BB$ mixes; that is, $\LL$ mixes. So we assume $\BB_3=\emptyset$. 

Now suppose that $\BB_2\neq\emptyset$. By Lemma~\ref{common-lem}, $\BB_2\cup
\AA_3$ mixes. Since $\BB_1$ and $\BB_2$ are nonempty, there exists $\alpha\in
L(w_1)\setminus L(v)$ and $\beta\in L(w_2)\setminus L(v)$. If
$\alpha\neq\beta$, then $\BB_1\cap \BB_2\neq\emptyset$; thus, $\BB_1\cup \BB_2$
mixes. So $\AA\cup\BB$ mixes, and we are done. So assume $\alpha=\beta$. If
$L(w_1)=L(w_2)$, then there exists $\gamma\in L(v)\setminus(L(w_1)\cap
L(w_2))$. By Lemma~\ref{missing-lem}, the set $\LL_{v,\gamma}$ mixes. 
Moreover,
$\BB_1\cap\LL_{v,\gamma}$ and $\BB_2\cap\LL_{v,\gamma}$ are both nonempty.
Thus, $\BB_1\cup\LL_{v,\gamma}\cup \BB_2$ mixes. So $\AA\cup\BB$ mixes, and we
are done. So assume $L(w_1)\neq L(w_2)$. Pick $a\in L(v)\cap L(w_1)\cap
L(w_2)$ and $b\in L(w_1)\setminus L(w_2)$ and $c\in L(w_2)\setminus L(w_1)$;
see Figure \ref{edge+2d}. Note that $\LL_{w_1,b}$ mixes by
Lemma~\ref{missing-lem}, and $\BB_2\cap\LL_{w_1,b}\neq\emptyset$. So
$\BB_2\cup\LL_{w_1,b}$ mixes; 
see Figure~\ref{lem15-fig1}c.
Moreover, there exists
$\vph\in\LL_{w_1,b}\cap\LL_{w_3,b}$, and $\vph$ mixes with $\BB_1$ by
Lemma~\ref{common-lem}(2). Since $\vph\in\LL_{w_1,b}$, the set $\LL_{w_1,b}\cup
\BB_1$ mixes; hence, $\BB_1\cup \BB_2$ mixes. So $\AA\cup\BB$ mixes, and we
are done. So we instead assume $\BB_2=\emptyset$. 

\begin{figure}[h!]
\centering
\begin{subfigure}{0.4\textwidth}
\centering
\begin{tikzpicture}[every node/.style={scale=0.8}]
\draw[thick] (0,1) -- (-1.5,-1) -- (0,-1) -- cycle (0,1) -- (1.5,-1);
\draw[thick] (0,1) node[uStyle] {$v$}; 
\draw[thick] (-1.5,-1) node[uStyle] {$w_1$};
\draw[thick] (0,-1) node[uStyle] {$w_2$};
\draw[thick] (1.5,-1) node[uStyle] {$w_3$};
\draw[thick] (0,1.5) node {$abc$};
\draw[thick] (-1.5,-1.5) node {$abd$};
\draw[thick] (0,-1.5) node {$acd$};
\draw[thick] (1.5,-1.5) node {$abc$};
\end{tikzpicture}
\caption{}
\label{edge+2d}
\end{subfigure}%
\begin{subfigure}{0.4\textwidth}
\centering
\begin{tikzpicture}[every node/.style={scale=0.8}]
\draw[thick] (0,1) -- (-1.5,-1) -- (0,-1) -- cycle (0,1) -- (1.5,-1);
\draw[thick] (0,1) node[uStyle] {$v$}; 
\draw[thick] (-1.5,-1) node[uStyle] {$w_1$};
\draw[thick] (0,-1) node[uStyle] {$w_2$};
\draw[thick] (1.5,-1) node[uStyle] {$w_3$};
\draw[thick] (0,1.5) node {$abc$};
\draw[thick] (-1.5,-1.5) node {$abd$};
\draw[thick] (0,-1.5) node {$abc$};
\draw[thick] (1.5,-1.5) node {$abc$};
\end{tikzpicture}
\caption{}
\label{edge+1d}
\end{subfigure}
\caption{Two cases when $w_1w_2\in E(G)$. (a) A 3-assignment for $N[v]$ when
$d\in L(w_1)\cap L(w_2)$ and $L(w_1)\neq L(w_2)$ and $L(v)=L(w_3)$. (b) A
3-assignment for $N[v]$ when $L(v)=L(w_2)=L(w_3)$.}
\end{figure}
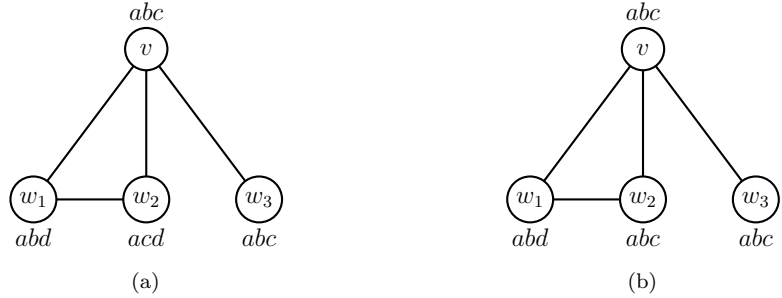

Now $\BB_2=\emptyset=\BB_3$, so
$L(w_2)=L(w_3)=L(v)$; see Figure
\ref{edge+1d}. From above $\AA_1=\emptyset$ and $\AA_2\cup \BB_1$
mixes.  So it suffices to show that $\AA_3$ mixes with $\BB_1$.  By
Lemma~\ref{missing-lem}, we know that $\LL_{w_2,c}$ and $\LL_{w_1,d}$ and
$\LL_{v,c}$ are nonempty sets that mix.  Note that $\BB_1=\LL_{w_1,d}$.  Since
$\LL_{w_1,d}\cap \LL_{w_2,c}\ne\emptyset$, we see that $\LL_{w_2,c}\cup
\LL_{w_1,d}$ mixes.  Similarly, $\LL_{w_1,d}\cap \LL_{v,c}\ne\emptyset$, so
$\LL_{w_1,d}\cup \LL_{v,c}$ mixes.  Finally $\LL_{v,c}\cap
\LL_{w_2,\alpha}\cap\LL_{w_3,\alpha}\ne \emptyset$ for each $\alpha\in\{a,b\}$.
Thus,
$\LL_{v,c}\cup\bigcup_{\alpha\in\{a,b\}}(\LL_{w_2,\alpha}\cap\LL_{w_3,\alpha})$
mixes.  Combining all these observations gives that $\LL_{w_2,c}\cup
\LL_{w_1,d}\cup\LL_{v,c}\cup\bigcup_{\alpha\in\{a,b\}}(\LL_{w_2,\alpha}\cap\LL_{w_3,\alpha})$
mixes.  But this set contains $\AA_3\cup \BB_1$, so we are done.

\textbf{Case 2: $\bm{G[w_1,w_2,w_3]}$ has no edges.} So $\AA_1,\AA_2$, and
$\AA_3$ are all nonempty. Moreover, $\BB_1\cup \AA_1$ and $\BB_1\cup \AA_2$
both mix by Lemma~\ref{common-lem}(2). If $\BB_2\neq\emptyset$, then $\BB_2\cup
\AA_1$ and $\BB_2\cup \AA_3$ both mix by Lemma~\ref{common-lem}(2); see
Figure~\ref{lem15-fig2}a. If, in
addition, $\BB_3=\emptyset$, then $\AA\cup\BB_1\cup\BB_2$ mixes, and we are
done. Otherwise, $\BB_3\neq\emptyset$. So $\BB_3\cup \AA_2$ and $\BB_3\cup
\AA_3$ both mix by Lemma~\ref{common-lem}(2). Thus, $\AA\cup\BB$ mixes, and we
are done. So assume $\BB_2=\emptyset$; by symmetry, also $\BB_3=\emptyset$. 

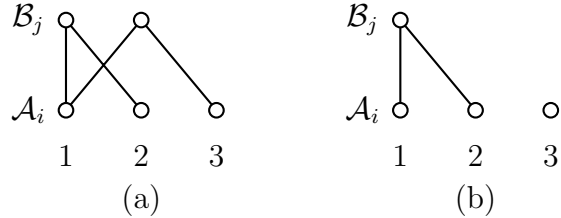
\begin{figure}[!h]
\centering
\begin{tikzpicture}[yscale=-1.2, thick]
\tikzstyle{uStyle}=[shape = circle, minimum size = 5.5pt, inner sep = 0pt,
outer sep = 0pt, draw, fill=white]
\tikzstyle{lStyle}=[shape = circle, minimum size = 5.5pt, inner sep = 0pt,
outer sep = 0pt, draw=none, fill=white]
\tikzset{every node/.style=uStyle}

\draw (1,-1) node (A1) {} (1,-2) node (B1) {};
\draw (2,-1) node (A2) {} (2,-2) node (B2) {};
\draw (3,-1) node (A3) {};

\draw (1,-0.5) node[lStyle] {1};
\draw (2,-0.5) node[lStyle] {2};
\draw (3,-0.5) node[lStyle] {3};
\draw (0.5,-1) node[lStyle] {$\AA_i$};
\draw (0.5,-2) node[lStyle] {$\BB_j$};
\draw (A3) -- (B2) -- (A1) -- (B1) -- (A2);

\draw (2,0) node[lStyle] {(a)};

\begin{scope}[xshift=1.75in]
\draw (1,-1) node (A1) {} (1,-2) node (B1) {};
\draw (2,-1) node (A2) {}; (2,-2) node (B2) {};
\draw (3,-1) node (A3) {}; (3,-2) node (B3) {};

\draw (1,-0.5) node[lStyle] {1};
\draw (2,-0.5) node[lStyle] {2};
\draw (3,-0.5) node[lStyle] {3};
\draw (0.5,-1) node[lStyle] {$\AA_i$};
\draw (0.5,-2) node[lStyle] {$\BB_j$};
\draw (A1) -- (B1) -- (A2);

\draw (2,0) node[lStyle] {(b)};

\end{scope}

\end{tikzpicture}

{
\captionsetup{width=.6\textwidth}
\caption{Here $G[w_1,w_2,w_3]$ has no edges.  (a) The case that
$\BB_2\ne\emptyset$.
(b) The case that $\BB_2=\emptyset=\BB_3$.
\label{lem15-fig2}}
}
\end{figure}

Now it suffices to show that $\AA_3\cup \BB_1$ mixes. Fix $z_1$ and $z_2$ in
$N(w_1)\setminus\{v\}$. By symmetry between $w_1$ and $v$, we assume $N(w_1)$
induces no edges and $L(w_1)=L(z_1)=L(z_2)$; see Figure~\ref{noedge+noalpha}.
By Lemma~\ref{missing-lem}, $\LL_{v,c}$ is nonempty and mixes. Now we show that
$\AA_3$ mixes. Form $G'$ from $G$ by deleting $v$ then identifying $w_2$ and
$w_3$. Call the new vertex $w_{23}$. Note that $G'$ is 2-connected;
equivalently, $w_{23}$ is not a cut-vertex in $G'$. To see this, note that
$\{w_2,w_3\}$ is not a vertex cut in $G$, since $G$ is 3-connected. So
$\{w_2,w_3,v\}$ is not a vertex cut in $G$, since $v$ is a leaf in
$G-\{w_2,w_3\}$. But the components of $G-\{v,w_2,w_3\}$ are the same as those
of $G'-w_{23}$. So $w_{23}$ is not a cut-vertex in $G'$, as desired. 

Let $L'$
be a 3-assignment for $G'$ with $L'(w_{23})=L(w_2)$ and $L'(x)=L(x)$ for all
other $x\in V(G')$. Now $G'$ is $L'$-swappable, by Lemma~\ref{extra-lem}(a) with
$w:=w_1$, since $d_{G'}(w_1)=2<3=|L'(w_1)|$. We note that every coloring
$\vph'$ of $G'$ can be extended to
a coloring $\vph$ in $G$ since $|\cup_{x\in N(v)}\vph'(x)|\le2$. Thus,
$L'$-colorings are in bijection with colorings in $\AA_3$. Moreover, every
$\alpha,\beta$-swap performed in $\vph'$ can also be performed in $\vph$ as
follows. If the swap does not involve $N(v)$, or
$\vph(v)\notin\{\alpha,\beta\}$, then we perform the same swap as in $\vph'$
(possibly a swap at $w_2$ and $w_3$ each). If $\vph(w_1)=\vph(w_2)=\vph(w_3)$
or $\beta=d$ (so the swap is at $w_1$), then we can recolor $v$ with
$\gamma\notin\{\alpha,\beta\}$ then perform the swap. Otherwise, suppose
$\vph(v)=\alpha$ and (i) $\vph(w_1)=\beta$, or (ii)
$\vph(w_2)=\vph(w_3)=\beta$. In case (i), the swap is valid since $\{a,b\}\in
L(v)$, and in case (ii), the swap is valid since $L(v)=L(w_2)=L(w_3)$. Thus,
$\AA_3$ mixes, as claimed.

For every $\vph\in\cup_{\gamma\in\{a,b\}}\LL_{w_2,\gamma}\cap\LL_{w_3,\gamma}$,
either (i) $\vph(w_1)=\vph(w_2)=\vph(w_3)$, or (ii) $\vph(w_1)=d$, or (iii)
$\vph(v)=c$.  In case (i), we have $\AA_1\cap\AA_2\cap\AA_3\ne \emptyset$, so
we are done.  In case (ii), $\vph\in \BB_1$; and in case (iii), $\vph \in \LL_{v,c}$.
Thus, $\LL_{w_2,\gamma}\cap\LL_{w_3,\gamma}$ mixes either with $\BB_1$ or
$\LL_{v,c}$, for each $\gamma\in\{a,b\}$. So it suffices to show that $\BB_1$
mixes with $\LL_{v,c}$. By our choice of $v$ at the beginning of the proof,
there exists $u\in N(z_1)\setminus\{w_1\}$ with $L(u)=L(z_1)$; otherwise, we
would have chosen $z_1$ instead of $v$. Since $G$ is 3-connected, $G-z_1-z_2$
contains a $u,v$-path $P$. Note that $L(x)\neq L(y)$ for some $x$ and $y$ that
are successive on $P$. By construction, either $x$ or $y$ is not in $N(v)\cup
N(w_1)$; see Figure \ref{noedge+nod}. By symmetry, assume $x\notin N(v)\cup
N(w_1)$. Since there exists $\gamma\in L(x)\setminus L(y)$, by
Lemma~\ref{missing-lem}, the set $\LL_{x,\gamma}$ mixes. Further,
$\LL_{x,\gamma}\cap \BB_1\neq\emptyset$ and
$\LL_{x,\gamma}\cap\LL_{v,c}\neq\emptyset$; thus, $\BB_1\cup\LL_{v,c}$ mixes,
and we are done.
\end{proof}

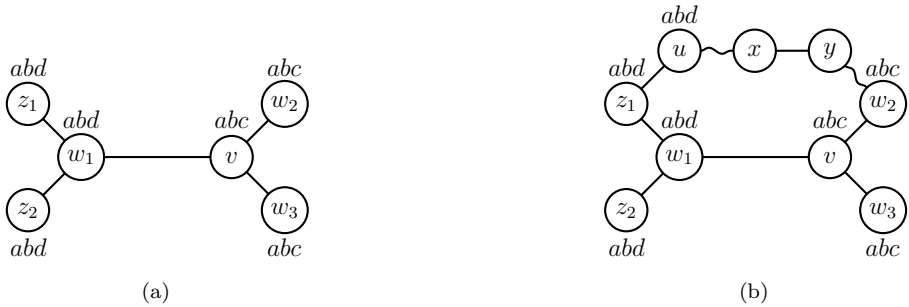
\begin{figure}[h!]
\begin{center}
\begin{subfigure}{0.3\textwidth}
\centering
\begin{tikzpicture}[every node/.style={scale=0.8}]
\draw[thick] (2,0) node[uStyle] (v) {$v$};
\draw[thick] (0,0) node[uStyle] (w1) {$w_1$};
\draw[thick] (v) -- ++(45:1) node[uStyle] (w2) {$w_2$};
\draw[thick] (v) -- ++(315:1) node[uStyle] (w3) {$w_3$};
\draw[thick] (w1) -- ++(135:1) node[uStyle] (z1) {$z_1$};
\draw[thick] (w1) -- ++(225:1) node[uStyle] (z2) {$z_2$};
\draw[thick] (w1) -- (v); 
\foreach \i/\j in {2.7/1.2, 2.7/-1.2, 2/0.5}
\draw[thick] (\i,\j) node {$abc$};
\foreach \i/\j in {-0.7/1.2, -0.7/-1.2, 0/0.5}
\draw[thick] (\i,\j) node {$abd$};
\draw[thick, white] (0,1.9) node {$abd$};
\end{tikzpicture}
\caption{\label{noedge+noalpha}}
\end{subfigure}\hspace{3cm}%
\begin{subfigure}{0.3\textwidth}
\centering
\begin{tikzpicture}[every node/.style={scale=0.8}]
\draw[thick] (2,0) node[uStyle] (v) {$v$};
\draw[thick] (0,0) node[uStyle] (w1) {$w_1$};
\draw[thick] (v) -- ++(45:1) node[uStyle] (w2) {$w_2$};
\draw[thick] (v) -- ++(315:1) node[uStyle] (w3) {$w_3$};
\draw[thick] (w1) -- ++(135:1) node[uStyle] (z1) {$z_1$};
\draw[thick] (w1) -- ++(225:1) node[uStyle] (z2) {$z_2$};
\draw[thick] (z1) -- ++(45:1) node[uStyle] (u) {$u$};
\draw[thick] (w2) ++(135:1) node[uStyle] (y) {$y$};
\draw[thick] (y) -- ++(-1,0) node[uStyle] (x) {$x$};
\draw[thick] (w1) -- (v); 
\draw[thick] (y) edge[decorate, decoration={snake, amplitude=0.4mm}] (w2); 
\draw[thick] (u) edge[decorate, decoration={snake, amplitude=0.4mm}] (x); 
\foreach \i/\j in {2.7/1.2, 2.7/-1.2, 2/0.5}
\draw[thick] (\i,\j) node {$abc$};
\foreach \i/\j in {-0.7/1.2, -0.7/-1.2, 0/0.5}
\draw[thick] (\i,\j) node {$abd$};
\draw[thick] (0,1.9) node {$abd$};
\end{tikzpicture}
\caption{\label{noedge+nod}}
\end{subfigure}
\caption{(a) A 3-assignment for $N(v)\cup N(w_1)$ when $L(w_1)=L(z_1)=L(z_2)$
and $L(v)=L(w_2)=L(w_3)$. (b) The first instance, along a $u,v$-path, of a
consecutive pair $x,y$ with distinct lists.}
\end{center}
\end{figure}

\begin{thm}
Let $G$ be $3$-regular with connectivity 3. 
If $L$ is a 3-assignment for $G$, then $G$ is $L$-swappable unless either (a)
$G=K_4$ or (b) $G= K_2\,\square\, K_3$ and $L(v)=L(w)$ for all $v,w\in V(G)$. 
\label{connectivity3}
\end{thm}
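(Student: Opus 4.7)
The plan is to split into two cases according to whether $L$ is identical on all vertices, and in each case to invoke a result that is already in hand. Since exception (a) removes $G=K_4$ from the conclusion, I assume throughout that $G\ne K_4$.

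First I would handle the non-identical case: suppose there exist $v,w\in V(G)$ with $L(v)\ne L(w)$. Since $G$ is 3-connected and 3-regular with $G\ne K_4$, Lemma~\ref{connectivity3-3reg-lem} applies directly and gives that $G$ is $L$-swappable. No extra work is needed for this case.

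Next I would handle the identical case: suppose $L(v)=L(w)$ for all $v,w\in V(G)$, and write $L(v)=\{\alpha,\beta,\gamma\}$ for the common list. Then every $L$-coloring of $G$ is simply a proper 3-coloring using the palette $\{\alpha,\beta,\gamma\}$, and any Kempe swap between two of these colors automatically yields another $L$-coloring (since all lists equal the full palette). Consequently, $L$-equivalence coincides with Kempe 3-equivalence, so $G$ is $L$-swappable if and only if all 3-colorings of $G$ are 3-equivalent. By Theorem~A (FJP), the latter holds for every connected 3-regular graph except $K_4$ and $K_2\,\square\,K_3$. Since we have already excluded $K_4$, the only remaining obstruction is $G=K_2\,\square\,K_3$, which is precisely exception (b).

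The main obstacle is not really in this theorem itself but has been absorbed into Lemma~\ref{connectivity3-3reg-lem} (whose proof is substantial) and into the FJP Theorem. So the proof of Theorem~\ref{connectivity3} is essentially a two-line bookkeeping argument that combines these two inputs; the only subtlety is checking that in the identical-lists case an $L$-valid Kempe swap is the same notion as an unrestricted Kempe swap on the palette, which is immediate because every color in the palette lies in every list.
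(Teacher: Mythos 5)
Your proposal is correct and follows essentially the same two-case argument as the paper: when the lists are not identical you invoke Lemma~\ref{connectivity3-3reg-lem}, and when they are identical you observe that $L$-swappability coincides with Kempe 3-equivalence and invoke Theorem~A of Feghali, Johnson, and Paulusma. The only (harmless) difference is that the paper additionally invokes Lemma~\ref{clique-lem} to cover $K_4$ with non-identical lists, whereas you discard $K_4$ outright, which the blanket exception~(a) in the statement permits.
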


\begin{proof}
Let $L$ be a 3-assignment for $G$. If there exists $v,w\in V(G)$ such that
$L(v)\neq L(w)$, then $G$ is $L$-swappable by
Lemma~\ref{connectivity3-3reg-lem} (or Lemma~\ref{clique-lem} when
$G=K_4$)\footnote{Although
we have not yet proved this lemma, its proof is independent of the present
theorem, so invoking it now is logically consistent. We make this choice to
preserve the narrative flow of Section~\ref{main-proof-sec}.}.
Otherwise, $L(v)=L(w)$ for every $v,w\in V(G)$. 
If $G\ne K_2 \, \square\, K_3$, then
by \hyperref[FJP Theorem]{Theorem~A}~\cite{FJP}, $G$ is $L$-swappable.
\end{proof}

\begin{lem}
Let $G$ be 4-connected, $k$-regular, but not a clique. Let $L$ be a $k$-assignment for $G$.
If there exist $v,w_1\in V(G)$ with $vw_1\in E(G)$ and $L(w_1)\ne L(v)$, then $G$ is
$L$-swappable.
\label{4connected-lem}
\end{lem}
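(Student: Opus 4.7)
The plan is to apply Lemma~\ref{overlap-lem} in the easy sub-case and then, in the remaining sub-case, emulate the partition-and-link strategy of Lemma~\ref{connectivity3-3reg-lem}, exploiting the enlarged neighborhood of $v$ afforded by $k\ge 4$.

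If $|L(v)\cap L(w_1)|\le 1$, apply Lemma~\ref{overlap-lem} to the edge $vw_1$. Since $G$ is 4-connected, $G-vw_1$ is 3-connected; since its two deficient vertices $v,w_1$ now have degree $k-1$, it is irregular, hence not a Gallai block, and therefore degree-choosable by Rubin's Block Lemma~\cite{ERT}. So $G$ is $L$-swappable in this case. Henceforth assume $|L(v)\cap L(w_1)|\ge 2$.

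Let $N(v)=\{w_1,\ldots,w_k\}$; recall $k\ge 4$. For each $i$, and for each non-adjacent pair $\{w_i,w_j\}\subseteq N(v)$, define
\[
\BB_i:=\bigcup_{\gamma\in L(w_i)\setminus L(v)}\LL_{w_i,\gamma},\qquad
\AA_{ij}:=\bigcup_{\alpha\in L(w_i)\cap L(w_j)}\LL_{w_i,\alpha}\cap\LL_{w_j,\alpha}.
\]
Because $G$ is 4-connected, both $G-w_i$ and $G-\{w_i,w_j\}$ are connected; so by Lemma~\ref{missing-lem} each nonempty $\BB_i$ mixes, by Lemma~\ref{common-lem}(1) each nonempty $\AA_{ij}$ mixes, and by Lemma~\ref{common-lem}(2), $\BB_i$ and $\AA_{ij}$ mix together whenever both are nonempty. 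Since $L(v)\ne L(w_1)$, the set $\BB_1$ is nonempty. A pigeonhole argument shows that every $L$-coloring $\vph$ lies in some $\BB_i$ or some $\AA_{ij}$: if some $\vph(w_i)\notin L(v)$, then $\vph\in \BB_i$; otherwise all $k$ colors $\vph(w_i)$ lie in $L(v)\setminus\{\vph(v)\}$, a set of only $k-1$ elements, so two of the $\vph(w_i)$ coincide on what must be a non-adjacent pair, placing $\vph$ in the corresponding $\AA_{ij}$.

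It therefore remains to verify that the auxiliary graph on the nonempty $\BB_i$'s and $\AA_{ij}$'s, with edges recording the mixings above, is connected and contains $\BB_1$; this is the main obstacle. Generically, $w_1$ has some non-neighbor $w_j\in N(v)$, in which case $(\BB_1,\AA_{1j})$ is a direct link, and similar links chain each other nonempty $\BB_i$ into $\BB_1$ through a shared non-adjacent pair (using $|L(v)\cap L(w_i)|\ge 2$ to force the relevant $\AA$-sets to be nonempty). The corner case where $w_1$ is adjacent to every other $w_i$ forces $N[v]$ to be nearly complete; since $G\ne K_{k+1}$, some pair $w_i,w_j\in N(v)\setminus\{w_1\}$ must be non-adjacent, and using 4-connectivity one locates a suitable path through $G-\{v,w_1\}$ between $w_i$ and $w_j$, yielding an induced $K_4^+$ or pair of good cycles to which one applies Lemma~\ref{K4p-lem} or Lemma~\ref{good-cycle-lem} combined with Lemma~\ref{H-lem} to conclude $G$ is $L$-swappable.
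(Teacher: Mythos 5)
Your setup matches the paper's: the same pigeonhole decomposition of $\LL$ into the classes $\BB_i$ and the classes $\LL_{w_i,\alpha}\cap\LL_{w_j,\alpha}$, mixed via Lemmas~\ref{missing-lem} and~\ref{common-lem}, with links built by exhibiting colorings in intersections; your preliminary reduction via Lemma~\ref{overlap-lem} (so that $|L(v)\cap L(w_i)|\ge 2$ for all $i$) is a legitimate addition the paper does not use. But the heart of the proof is exactly the part you wave away: showing the auxiliary graph is connected for \emph{every} list configuration. Two concrete gaps. First, Lemma~\ref{common-lem}(2) only links a class $\LL_{w_i,\alpha}\cap\LL_{w_j,\alpha}$ to $\BB_i$ or $\BB_j$; when $L(w_i)=L(w_j)=L(v)$ (so $\BB_i=\BB_j=\emptyset$), which certainly occurs, your ``similar links chain each other nonempty $\BB_i$'' gives no way to reach these classes from $\BB_1$. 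The paper does this by explicitly constructing a coloring with $\vph(w_i)=\vph(w_j)=\alpha$ and $\vph(w_1)=\beta\in L(w_1)\setminus(L(v)\cup\{\alpha\})$, precoloring at most three neighbors of $v$ and extending greedily toward $v$ (this is where 4-connectivity enters), and when no such $\beta$ exists — e.g.\ when $\bigcup_i L(w_i)\setminus L(v)$ is a single color $\alpha$ (the paper's Case~3), or when each $|L(w_i)\setminus L(v)|\le 1$ — it needs separate, delicate case analyses that your sketch never touches; the naive intersection would force $\alpha$ to repeat on three mutually constrained neighbors and can fail. (Also, Lemma~\ref{common-lem}(1) does not by itself show that your union $\AA_{ij}$ over all $\alpha$ mixes; only each fixed-$\alpha$ piece does, so the per-$\alpha$ linking must be carried out.)

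Second, your corner case is both misanalyzed and not the real obstruction. If $w_1$ is adjacent to all other $w_i$ this says nothing about adjacencies among $w_2,\dots,w_k$, so $N[v]$ need not be ``nearly complete''; more importantly, a $w_i,w_j$-path in $G-\{v,w_1\}$ may pass through other common neighbors of $v$ and $w_1$ (note $N[w_1]=N[v]$ here), producing chords, so you get no induced $K_4^+$ and no pair of good cycles satisfying the hypotheses of Lemma~\ref{good-cycle-lem}; 4-connectivity alone does not supply the induced subgraph you need, and you verify none of the side conditions of that lemma. The paper never resorts to the induced-subgraph route in the 4-connected case with non-identical lists: it handles adjacency obstructions (e.g.\ $w_1w_i\in E(G)$ for all $i$) inside the coloring-construction arguments (including an auxiliary application of Lemma~\ref{degen-lem} on $G-vw_1$ with truncated lists). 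As it stands, your proposal establishes the easy mixing statements but not the connectivity of the auxiliary graph, which is the substance of the lemma.
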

\begin{proof}
By the list version of Brooks' Theorem~\cite{ERT}, $G$ has an $L$-coloring; that is,
$\LL\ne\emptyset$.  
Denote the neighbors of $v$ by $w_1, \ldots, w_k$.
By Lemma~\ref{missing-lem}, for each $i\in[k]$ and each $\alpha\in
L(w_i)\setminus L(v)$, the set $\LL_{w_i,\alpha}$ mixes.
By Lemma~\ref{common-lem}(2), for all distinct $j,\ell\in[k]$ and each $\beta\in L(w_j)\cap
L(w_{\ell})$, the set $\LL_{w_j,\beta}\cap\LL_{w_{\ell},\beta}$ mixes.
Let $\LL_1:=\cup_{i\in[k]}\cup_{\alpha\in L(w_i)\setminus L(v)}\LL_{w_i,\alpha}$.\aside{$\LL_1$, $\LL_2$}
Let $\LL_2:=\cup_{j,\ell\in[k], j\ne\ell}\cup_{\beta\in L(w_j)\cap
L(w_{\ell})}\LL_{w_j,\beta}\cap \LL_{w_{\ell},\beta}$.
By Pigeonhole, for every $\vph\in\LL$, either $\vph\in \LL_1$ or $\vph\in
\LL_2$, or both.  So $\LL=\LL_1\cup \LL_2$.

We will often want to $L$-color a small set of vertices, $S$, and show that our
coloring $\vph$ of $S$ extends to an $L$-coloring of $G$.  This is possible whenever
$S\subset N(v)$ and $|S|\le 3$ and $|(\cup_{x\in S}\vph(x))\cap L(v)| < |S|$.
After coloring $S$, we greedily color $G-S$ in order of nonincreasing
distance from $v$.  This uses that
$G-S$ is connected, since $G$ is 4-connected and $|S|<4$. And the same
argument shows that all such $L$-colorings (with a fixed coloring of $S$) mix.

\textbf{Case 1: $\bm{|L(w_1)\setminus L(v)|\ge 2}$.}
Note, for each $\alpha\in L(w_1)\setminus L(v)$, that
$\LL_{w_1,\alpha}\ne\emptyset$.  Further, $\cup_{\alpha\in L(w_1)\setminus
L(v)}\LL_{w_1,\alpha}$ mixes, by Lemma~\ref{missing-lem}.
Suppose there exists $i\in [k]\setminus \{1\}$ with $L(w_i)\setminus L(v)\ne
\emptyset$.  Similar to above, $\cup_{\beta\in L(w_i)\setminus L(v)}
\LL_{w_i,\beta}$ mixes.  Further, given $\beta\in L(w_i)\setminus L(v)$, there
exists $\alpha\in L(w_1)\setminus L(v)$ such that $\alpha\ne \beta$.  Since
$\LL_{w_1,\alpha}\cap\LL_{w_i,\beta}\ne\emptyset$, we see that $\LL_1$ mixes.

Fix $i,j\in[k]$ with $w_iw_j\notin E(G)$ and $L(w_i)\cap L(w_j)\ne \emptyset$; fix $\alpha\in
L(w_i)\cap L(w_j)$.  Recall that $\LL_{w_i,\alpha}\cap\LL_{w_j,\alpha}$ mixes,
by Lemma~\ref{common-lem}(1).
Suppose that $i,j\in[k]\setminus\{1\}$.  Fix $\alpha\in
L(w_i)\cap L(w_j)$.  Now there exists $\beta\in
L(w_1)\setminus(L(v)\cup\{\alpha\})$.  Thus, there exists an $L$-coloring
$\vph$ with $\vph(w_i)=\vph(w_j)=\alpha$ and $\vph(w_1)=\beta$, by the
remark before Case 1.  Note that $\vph\in\LL_{w_1,\beta}\cap
(\LL_{w_i,\alpha}\cap \LL_{w_j,\alpha})$.  
Thus, $\LL_{w_i,\alpha}\cap\LL_{w_j,\alpha}$ mixes with $\LL_1$ for each such 
choice of $i,j$, and $\alpha\in L(w_i)\cap L(w_j)$.

Finally, suppose there exists $i\in[k]\setminus\{1\}$ with $\alpha\in
L(w_1)\cap L(w_i)$ and $w_1w_i\notin E(G)$.  Let $G':=G-vw_1$.  Let
$L'(w_i):=\{\alpha\}$,
$L'(w_1)=\{\alpha\}\cup(L(w_1)\setminus L(v))$, and let $L'(x):=L(x)$
for all other $x\in V(G')$.  Note that $G'$ has an $L'$-coloring and all
$L'$-colorings of $G'$ mix, by Lemma~\ref{degen-lem}.  Further, the
$L'$-colorings $\vph'$ of $G'$ are in bijection with $L$-colorings $\vph$ of
$G$ with $\vph(w_i)=\alpha$
and $\vph(w_1)\in\{\alpha\}\cup(L(w_1)\setminus L(v))$.  This latter set
includes a coloring in $\LL_{w_1,\alpha}\cap\LL_{w_i,\alpha}$ and also a
coloring in $\LL_1$.
Thus, $\LL_1\cup \LL_2$ mixes; that is, $\LL$ mixes.

By symmetry among the $w_i$'s, we henceforth assume that $|L(w_i)\setminus L(v)|\le
1$ for all $i\in[k]$.

\textbf{Case 2: $\bm{|L(w_i)\setminus L(v)|\le 1}$ for all $\bm{i\in[k]}$ and
$\bm{|(L(w_1)\cup L(w_2))\setminus L(v)|\ge 2}$.}
As above, $\cup_{\alpha\in L(w_i)\setminus L(v)}\LL_{w_i,\alpha}$ mixes, for
each $i\in[k]$,  by Lemma~\ref{missing-lem}. By hypothesis, there exist
$\alpha\in L(w_1)\setminus L(v)$ and $\beta\in L(w_2)\setminus L(v)$ with
$\alpha\ne \beta$.  So there exists $\vph\in \LL_{w_1,\alpha}\cap
\LL_{w_2,\beta}$.  Further, for every $i\in[k]$ with $L(w_i)\ne L(v)$,
there exists $\gamma\in L(w_i)\setminus L(v)$ such that either (a)
$\LL_{w_1,\alpha}\cap \LL_{w_i,\gamma}\ne \emptyset$ (and $\gamma\ne \alpha$)
or (b) $\LL_{w_2,\beta}\cap \LL_{w_i,\gamma}\ne \emptyset$ (and $\gamma\ne
\beta$).  Thus, $\LL_1$ mixes.

Now instead fix any distinct $i,j\in[k]$ with
$w_iw_j\notin E(G)$.  
For each $\alpha\in L(w_i)\cap L(w_j)$, recall that
$\LL_{w_i,\alpha}\cap\LL_{w_j,\alpha}$ mixes by Lemma~\ref{common-lem}(1). 
For each such $i,j$ there exists $\alpha\in L(w_i)\cap L(w_j)$
and such a coloring $\vph$ either with $\vph(w_1)\notin L(v)$ or with
$\vph(w_2)\notin L(v)$, unless $\{i,j\}=\{1,2\}$.  However, in this case,
$\bigcup_{\alpha\in L(w_1)\cap L(w_2)} \LL_{w_1,\alpha}\cap
\LL_{w_2,\alpha}\cup\bigcup_{\beta\in L(w_1)\setminus L(v)}\LL_{w_1,\beta}$ mixes, by
Lemma~\ref{common-lem}(2).
Thus, $\LL_1\cup \LL_2$ mixes; that is, $\LL$ mixes.  

By symmetry among the $w_i$'s, we now assume $|(L(w_i)\cup L(w_j))\setminus L(v)|\le
1$ for all $i,j\in[k]$.

\textbf{Case 3: $\bm{|\cup_{i\in[k]}L(w_i)\setminus L(v)|=1}$.}
Let $\{\alpha\}$ denote $\cup_{i\in[k]}L(w_i)\setminus L(v)$.
Note that $\LL_{w_h,\alpha}$ mixes whenever $h\in[k]$ and $\alpha\in L(w_h)$,
by Lemma~\ref{missing-lem}.  Since $G$ is not a clique, there exist $i,j\in[k]$
such that $w_iw_j\notin E(G)$.  For each such $i,j$ and $\beta\in L(w_i)\cap
L(w_j)$, the set $\LL_{w_i,\beta}\cap \LL_{w_j,\beta}$ mixes, by
Lemma~\ref{common-lem}(1).  Now we show that $\LL_{w_h,\alpha}$ mixes with
$\LL_{w_i,\beta}\cap\LL_{w_j,\beta}$ for all such $h, i, j, \alpha$, and
$\beta$.  First suppose that $\alpha\ne \beta$.  If $h\notin\{i,j\}$, then there
exists an $L$-coloring $\vph$ with $\vph(w_h)=\alpha$ and
$\vph(w_i)=\vph(w_j)=\beta$.  If $h\in\{i,j\}$, then this claimed mixing follows from
Lemma~\ref{common-lem}(2).  So assume instead that $\alpha=\beta$.  If
$h\in\{i,j\}$, then this is trivial, since
$\LL_{w_i,\beta}\cap\LL_{w_j,\beta}\subseteq \LL_{w_h,\alpha}$.
So assume $h\notin\{i,j\}$.  Pick $\beta'\in L(w_i)\cap
L(w_j)\setminus\{\alpha\}$.
There exists an $L$-coloring $\vph$ with $\vph(w_h)=\alpha$ and
$\vph(w_i)=\vph(w_j)=\beta'$.  But now we are again done by
Lemma~\ref{common-lem}(2).
More specifically, $\LL_{w_h,\alpha}$ mixes and $\LL_{w_i,\alpha}$ mixes.  By
Lemma~\ref{common-lem}(2), also
$\LL_{w_i,\beta}\cap(\LL_{w_j,\beta}\cup\LL_{w_j,\alpha})$ mixes.
Since $\LL_{w_i,\alpha}\cap\LL_{w_j,\alpha}\subseteq \LL_{w_i,\alpha}$, we are
done.
\end{proof}

The next lemma handles the case that $G=K_{k+1}$.  
The main ideas in the proof are similar to those in the previous
proof, but the details are a bit different.

\begin{lem}
Let $G=K_{k+1}$, where $k\ge 3$, and let $L$ be a $k$-assignment.
If there exist $v,w\in V(G)$ such that $L(v)\ne L(w)$, then $G$ is
$L$-swappable.
\label{clique-lem}
\end{lem}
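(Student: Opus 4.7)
My plan is to follow the strategy of Lemma~\ref{4connected-lem}, replacing the applications of Lemma~\ref{common-lem} (which require non-adjacent pairs) by explicit Kempe swap arguments, since every two vertices of $K_{k+1}$ are adjacent.

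Label the vertices $v,w_1,\ldots,w_k$ with $L(w_1)\ne L(v)$. Because $L$ is a $k$-assignment on a $k$-regular graph, it is a degree-assignment, so Lemma~\ref{missing-lem} (applied with $x:=w_i$ and $w:=v$) gives, for each $i\in[k]$ with $L(w_i)\ne L(v)$, that $\bigcup_{\alpha\in L(w_i)\setminus L(v)}\LL_{w_i,\alpha}$ is nonempty and mixes; in particular $\LL\ne\emptyset$. A simple counting argument (each $L$-coloring of $K_{k+1}$ uses $k+1$ distinct colors, but $|L(v)|=k$) forces at least one $w_i$ to receive a color outside $L(v)$ in every $L$-coloring, so
\[
\LL \;=\; \bigcup_{i\in[k]}\ \bigcup_{\alpha\in L(w_i)\setminus L(v)}\LL_{w_i,\alpha},
\]
and each nonempty ``row'' of this union mixes. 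It remains to show that different rows mix with each other.

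Fix distinct indices $i,j$ whose rows are nonempty. Suppose first that I can choose $\alpha\in L(w_i)\setminus L(v)$ and $\beta\in L(w_j)\setminus L(v)$ with $\alpha\ne\beta$. I claim $\LL_{w_i,\alpha}\cap\LL_{w_j,\beta}\ne\emptyset$: assign $w_i:=\alpha$ and $w_j:=\beta$, and extend to the remaining $K_{k-1}$ via Corollary~\ref{degen-cor}, placing $v$ last. Since $\alpha,\beta\notin L(v)$, the residual list of $v$ has size $k>k-2=\Delta(K_{k-1})$, while every other residual list has size at least $k-2$; the extension succeeds and the two rows mix. Otherwise $L(w_i)\setminus L(v)=L(w_j)\setminus L(v)=\{\alpha\}$ for a single color $\alpha$, and direct intersection fails because $w_iw_j\in E(G)$. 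The containments $L(w_i),L(w_j)\subseteq L(v)\cup\{\alpha\}$ force $|L(w_i)\cap L(w_j)|\ge k-1$, so I can pick $\beta\in(L(w_i)\cap L(w_j))\setminus\{\alpha\}\subseteq L(v)$. A Corollary~\ref{degen-cor} extension as before produces $\vph\in\LL_{w_i,\alpha}$ with $\vph(w_j)=\beta$; the $\alpha,\beta$-component of $\vph$ is precisely the edge $w_iw_j$ (each color is used exactly once in a proper coloring of a clique), and the $\alpha,\beta$-swap is $L$-valid because $\beta\in L(w_i)$ and $\alpha\in L(w_j)$. The swap sends $\vph$ into $\LL_{w_j,\alpha}$, so the rows mix.

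The main obstacle I expect is this second case, where the natural intersection argument collapses because adjacency in $K_{k+1}$ forbids two vertices from sharing the color $\alpha$. The key observation that rescues the argument is that the constraint $L(w_i),L(w_j)\subseteq L(v)\cup\{\alpha\}$ is tight enough to guarantee a secondary color $\beta$ simultaneously present in both lists, which can serve as the second color of an $L$-valid Kempe swap on the single edge $w_iw_j$.
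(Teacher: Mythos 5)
Your proof is correct, and while it uses the same basic ingredients as the paper's proof --- Lemma~\ref{missing-lem} to make each class $\bigcup_{\alpha\in L(w_i)\setminus L(v)}\LL_{w_i,\alpha}$ mix, greedy extensions that finish at a vertex with a spare color, and a Kempe swap whose $\alpha,\beta$-component is a single edge of the clique (exactly the ``$2,\alpha$-swap at $v_2$'' trick in the paper's Case~3) --- the organization is genuinely different. The paper splits into three cases according to the global structure of the list assignment (some $|L(v_i)\setminus L(v_j)|\ge 2$; two distinct colors escaping a common list; one color lying in two lists but missing from a third), anchors different cases at different vertices, and assembles the mixing classes case by case, with exhaustiveness of the trichotomy left implicit. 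You instead fix a single anchor $v$ with a neighbor of differing list, use the observation that a proper coloring of $K_{k+1}$ uses $k+1$ distinct colors while $|L(v)|=k$ to get the uniform covering $\LL=\bigcup_{i}\bigcup_{\alpha\in L(w_i)\setminus L(v)}\LL_{w_i,\alpha}$ whose nonempty rows each mix, and reduce everything to a pairwise-bridging step with only two subcases: distinct escape colors (bridge by a common coloring, built greedily with $v$ last, which works since $\alpha,\beta\notin L(v)$), or identical singleton escape color $\alpha$ (where $L(w_i),L(w_j)\subseteq L(v)\cup\{\alpha\}$ gives $|L(w_i)\cap L(w_j)|\ge k-1\ge 2$, supplying the second color $\beta$ for the $L$-valid swap along $w_iw_j$, whose component is just that edge because each color occurs once on a clique). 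Your version buys a shorter, more systematic case analysis with exhaustiveness built in and subsumes the paper's Case~1 without separate treatment; the paper's version extracts somewhat finer structural information about the lists along the way but at the cost of a longer, less uniform argument. All the supporting checks (hypotheses of Lemma~\ref{missing-lem} for $x:=w_i$, $w:=v$; the preceded-by-fewer-than-$|L'(x)|$-neighbors condition for the Corollary~\ref{degen-cor} extensions; $L$-validity of the edge swap) go through, so I see no gap.
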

\begin{proof}
We denote the vertices of $G$ by $v_1,\ldots,v_{k+1}$, and we consider three
cases.  

\textbf{Case 1: There exist $\bm{i,j\in [k+1]}$ such that $\bm{|L(v_i)\setminus
L(v_j)|\ge 2}$.}
Fix $\vph\in\LL$.  By symmetry, assume that $i=1$ and $j=2$.
Since $|L(v_2)|=k$ and $|V(G)|=k+1$, there exists $\ell\in[k+1]$ such that
$\vph(v_{\ell})\notin L(v_2)$. Let $\alpha:=\vph(v_{\ell})$.
Let $\LL_1:= \cup_{\beta\in L(v_1)\setminus L(v_2)}\LL_{v_1,\beta}$.
By Lemma~\ref{missing-lem}, 
we know that $\LL_{v_{\ell},\alpha}$ mixes and also $\LL_1$
mixes.  We will construct an $L$-coloring $\vph'$ such that $\vph'\in \LL_1\cap
\LL_{v_{\ell},\alpha}$.  This will prove that $\LL_1$ mixes with
$\LL_{v_{\ell},\alpha}$.  Since $\vph\in \LL_{v_{\ell},\alpha}$ and $\vph$ is
an arbitrary $L$-coloring, we conclude that $G$ is $L$-swappable.
Let $\vph'(v_{\ell})=\alpha$, color $v_1$ from
$L(v_1)\setminus(L(v_2)\cup\{\beta\})$, and thereafter color greedily, finishing
with $v_2$.

\textbf{Case 2: There exist distinct $\bm{i,j,\ell\in [k+1]}$ and distinct colors
$\bm{\alpha_i,\alpha_j}$ such that $\bm{\alpha_i\in L(v_i)\setminus L(v_\ell)}$ and
$\bm{\alpha_j\in L(v_j)\setminus L(v_\ell)}$.}
We can assume that Case 1 does not hold, so $|L(w)\setminus L(x)|\le 1$ for all
$w,x\in V(G)$.
By symmetry, we assume that $i=1$, $j=2$, and $\ell=3$.  Let
$\LL_1:=\LL_{v_1,\alpha_1}$ and $\LL_2:=\LL_{v_2,\alpha_2}$.  By Lemma~\ref{missing-lem},
note that $\LL_1$ mixes and $\LL_2$ mixes.  We construct $\vph'\in \LL_1\cap
\LL_2$.  Let $\vph'(v_1):=\alpha_1$, let $\vph'(v_2):=\alpha_2$, and thereafter
color greedily, finishing with $v_3$.  
Fix an arbitrary $L$-coloring $\vph$. 
Now it suffices to show that either $\vph$ is $L$-equivalent to some
$L$-coloring in $\LL_1$ or $\vph$ is $L$-equivalent to some $L$-coloring in
$\LL_2$.  Since $|L(v_3)|=k$ and $|V(G)|=k+1$, there exists $h$ such that
$\vph(v_h)\notin L(v_3)$.  Further, either $\vph(v_h)\ne \alpha_1$ or
$\vph(v_h)\ne \alpha_2$; by symmetry, assume the former.  Now, as in the
previous case, there exists an $L$-coloring $\vph'$ such that $\vph'\in
\LL_1\cap \LL_{v_h,\vph(v_h)}$.  Since $\LL_{v_h,\vph(v_h)}$ mixes, by
Lemma~\ref{missing-lem}, 
we are done.

\textbf{Case 3: There exist $\bm{i,j,\ell\in [k+1]}$ with $\bm{i\notin\{j,\ell\}}$ and a
color $\bm{\alpha}$ such that $\bm{\alpha\in (L(v_j)\cap L(v_\ell))\setminus
L(v_i)}$.} We can assume neither Case 1 nor Case 2 holds. So, $|L(v_i)\cap
L(v_j)|=|L(v_i)\cap L(v_\ell)|=k-1$. If $j=\ell$, i.e., there is only one vertex
$v_j$ such that $\alpha\in L(v_j)$ (implying that $L(v_l)=L(v_i)$ for all $l\in
[k+1]\setminus\{j\}$), then every coloring $\vph$ lies in $\LL_{v_j,\alpha}$.
This is because $|L(v_i)|=k$ and $|V(G)|=k+1$, which implies that some vertex
(namely $v_j$) has $\vph(v_j)\notin L(v_i)$. By Lemma~\ref{missing-lem},
$L_{v_j,\alpha}$ mixes. So, we conclude that $G$ is $L$-swappable. Thus, we
assume that $j\neq \ell$. 

By symmetry, assume that $i=1, j=2,$ and $\ell=3$. If $L(v_2)=L(v_3)$, then
pick $\beta\in L(v_1)\setminus L(v_2)$. By Lemma~\ref{missing-lem}, each of
$\LL_{v_1,\beta}, \LL_{v_2,\alpha},$ and $\LL_{v_3,\alpha}$ mixes. Moreover,
$\LL_{v_1,\beta}\cap\LL_{v_2,\alpha}\neq\emptyset$ (color $v_1$ with $\beta$,
color $v_2$ with $\alpha$, then color greedily, finishing with $v_3$). So,
$\LL_{v_1,\beta}$ mixes with $\LL_{v_2,\alpha}$. By symmetry between $v_2$ and
$v_3$, the set $\LL_{v_1,\beta}$ also mixes with $\LL_{v_3,\alpha}$. Further,
if $\alpha\in L(v_h)$ for some $h\in [k+1]\setminus\{2,3\}$, then
$\LL_{v_h,\alpha}$ mixes by Lemma~\ref{missing-lem}. And as above,
$\LL_{v_h,\alpha}\cap\LL_{v_1,\beta}\neq\emptyset$, which implies that
$\LL_{v_h,\alpha}$ mixes with $\LL_{v_1,\beta}$. Since $|L(v_1)|=k$ and
$|V(G)|=k+1$, for every $L$-coloring $\vph$ there exists $h\in [k+1]$ such
that $\vph(v_h)\notin L(v_1)$. In particular, $\vph(v_h)=\alpha$. So,
$\vph\in\LL_{v_h,\alpha}$, and we conclude that $G$ is $L$-swappable. Thus,
we assume $L(v_2)\neq L(v_3)$.

By symmetry, assume $L(v_1)=[k], L(v_2)=(\{\alpha\}\cup [k])\setminus\{1\},$ and
$L(v_3)=\{\alpha\}\cup [k-1]$. By Lemma~\ref{missing-lem}, each of
$\LL_{v_1,k},\LL_{v_1,1},\LL_{v_2,\alpha},\LL_{v_2,k},\LL_{v_3,\alpha},$ and
$\LL_{v_3,1}$ mixes. Moreover, $\LL_{v_1,k}\cap\LL_{v_2,\alpha}\neq\emptyset$
(color greedily, finishing with $v_3$). Similarly,
$\LL_{v_1,k}\cap\LL_{v_3,1}\neq\emptyset,
\LL_{v_1,1}\cap\LL_{v_2,k}\neq\emptyset,$ and
$\LL_{v_1,1}\cap\LL_{v_3,\alpha}\neq\emptyset$. Thus, $\LL_{v_1,k}$ mixes
with each of $\LL_{v_2,\alpha}$ and $\LL_{v_3,1}$; similarly, $\LL_{v_1,1}$
mixes with each of $\LL_{v_2,k}$ and $\LL_{v_3,\alpha}$. By coloring greedily,
finishing with $v_1$, there exists $\vph\in\LL_{v_2,\alpha}\cap\LL_{v_3,2}$.
And a $2,\alpha$-swap in $\vph$ at $v_2$ gives a coloring in
$\LL_{v_3,\alpha}$. Thus, $\LL_{v_2,\alpha}$ mixes with $\LL_{v_3,\alpha}$.
Finally, if $\alpha\in L(v_l)$ for some $l\in[k+1]\setminus\{2,3\}$, then
$\LL_{v_l,\alpha}$ mixes by Lemma~\ref{missing-lem}. And, $\LL_{v_l,\alpha}$
mixes with $\LL_{v_1,1}$ since $\LL_{v_l,\alpha}\cap\LL_{v_1,1}\neq\emptyset$.
As above, for every $L$-coloring $\vph$, there exists $h\in [k+1]$ such that
$\vph(v_h)=\alpha$.  Thus, we conclude that $G$ is $L$-swappable.   
\end{proof}

\begin{lem}
\label{noW4-coloring-lem}
Let $G$ be a 4-connected graph that is $k$-regular, but does not contain an
induced 4-wheel, $W_4$, and is not $K_{k+1}$.  If $L(v)=[k]$ for all $v\in V(G)$,
then $G$ is $L$-swappable.
\end{lem}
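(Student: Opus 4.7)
The plan rests on two key observations. First, since $L(v)=[k]$ for every $v$, every Kempe swap is automatically $L$-valid; moreover, performing an $(a,b)$-swap on every $(a,b)$-component of an $L$-coloring $\vph$ one component at a time transforms $\vph$ into the coloring obtained by globally interchanging colors $a$ and $b$. Iterating over transpositions, $\vph$ is $L$-equivalent to $\sigma\circ\vph$ for every $\sigma\in S_k$, so any two $L$-colorings inducing the same unordered partition of $V(G)$ into color classes are automatically $L$-equivalent, and it suffices to connect any two color-partitions via Kempe swaps. Second, the no-induced-$W_4$ hypothesis severely restricts the local color pattern at each vertex: if $\vph(v)=a$, then the $k$ neighbors of $v$ use colors from $[k]\setminus\{a\}$, so some color must repeat by pigeonhole, but two distinct colors $b,c$ cannot each be used twice on $N(v)$ with the four corresponding neighbors inducing a $C_4$, since that would yield an induced $W_4$ centered at $v$.

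Combining these, I would transform an arbitrary $L$-coloring $\vph_1$ into any target $\vph_2$ in two phases. In Phase~1, use a global color permutation (realized by the component-by-component trick above) so that $\sigma\circ\vph_1$ agrees with $\vph_2$ on a chosen reference vertex $v_0$. In Phase~2, propagate the agreement outward in a BFS order rooted at $v_0$: for each vertex $w$ adjacent to the already-agreeing region with current color $a$ and target color $b$, perform an $(a,b)$-Kempe swap at $w$. This swap is always $L$-valid since $L$ is complete, and the no-$W_4$ restriction---combined with 4-connectivity, which prevents bicolored components from being trapped by small cuts---would be used to argue that the $(a,b)$-component of $w$ does not extend back into the agreed region; if it did, the obstructing vertex would be dominated by a $K_{2,2}$-configuration, producing an induced $W_4$ and a contradiction.

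The main obstacle is precisely this propagation step: ruling out that the Kempe chain at $w$ reaches back through the agreed region and undoes prior progress. I expect a careful case analysis showing that whenever the naive swap would disturb a matched vertex, the bicolored paths and branchings must contain an induced $C_4$ with a common dominating neighbor, yielding an induced $W_4$. An equivalent packaging is a potential-function argument: define an invariant counting disagreements (modulo color permutation) with $\vph_2$, and show that every $L$-coloring $\vph\ne\vph_2$ admits an $L$-valid Kempe swap strictly decreasing this potential, with the absence of an induced $W_4$ being exactly the structural ingredient that guarantees such a swap exists. The exceptional graph $K_{k+1}$ is excluded because there the four-connectivity escape routes collapse, and $W_4$-free identical-list colorings are handled separately in Lemma~\ref{clique-lem}.
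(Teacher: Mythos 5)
There is a genuine gap: the entire content of the lemma is concentrated in your Phase~2, and you do not prove it --- you only state that you ``expect a careful case analysis'' showing that any obstruction to the naive propagation swap forces an induced $W_4$. That expectation is not justified and is very unlikely to be realizable as stated. The hypothesis that $G$ has no induced $W_4$ is a purely local condition (it constrains the neighborhood of a vertex dominating an induced $C_4$), whereas the obstruction you must rule out is global: the $(a,b)$-Kempe chain of $w$ can re-enter the already-matched region along a long path far from any common dominating vertex, and nothing about such a chain produces four vertices inducing a $C_4$ together with a vertex adjacent to all of them. Likewise your fallback formulation --- that every coloring $\vph\ne\vph_2$ admits a single $L$-valid swap strictly decreasing a disagreement potential --- is a much stronger statement than the lemma and is not established either; greedy ``fix one vertex at a time'' schemes are exactly what fails in general for Kempe reconfiguration on regular graphs, which is why the known proofs (and this paper's) avoid them. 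Your preliminary observations (all swaps are $L$-valid, global color permutations are realizable componentwise, so it suffices to connect color partitions) are correct but easy, and your remark about $K_{k+1}$ is off target: it is excluded simply because with identical lists of size $k$ it has no $L$-coloring at all.

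For comparison, the paper's proof never propagates agreement vertex by vertex. It fixes one vertex $v$ with neighborhood $\{w_1,\dots,w_k\}$, notes by Pigeonhole that every coloring lies in some $\LL_{i,j}:=\bigcup_{\alpha}(\LL_{w_i,\alpha}\cap\LL_{w_j,\alpha})$ with $w_iw_j\notin E(G)$, and shows each nonempty $\LL_{i,j}$ mixes (Lemma~\ref{common-lem}, using 4-connectivity, plus swaps at $w_i,w_j$ to move between values of $\alpha$, valid because the lists are full). Two classes $\LL_{i,j}$ and $\LL_{j,\ell}$ sharing an index are linked by an auxiliary list assignment on $G-vw_j$ together with a degeneracy order (Corollary~\ref{degen-cor}-style), and the no-$W_4$ hypothesis enters only at the final step: if $\LL_{h,i}$ and $\LL_{j,\ell}$ could not be chained through a common index, the four neighbors $w_h,w_j,w_i,w_\ell$ would induce a $C_4$ dominated by $v$, i.e.\ an induced $W_4$. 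If you want to salvage your approach you would need a genuinely new argument for the propagation step; as written, the proposal does not prove the lemma.
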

\begin{proof}
Let $G$ satisfy the hypothesis.  Fix an arbitrary vertex $v\in V(G)$ and denote
$N(v)$ by $\{w_1,\ldots,w_k\}$.  By Pigeonhole, for every $\vph\in
\LL$ there exist distinct $i,j,\alpha\in[k]$ with $w_iw_j\notin
E(G)$ and $\vph(w_i)=\vph(w_j)=\alpha$.  Since $L(x)=[k]$ for all $x\in V(G)$,
for all distinct $i,j$ such that $w_iw_j\notin E(G)$ and all distinct
$\alpha,\beta\in[k]$, the sets $\LL_{w_i,\alpha}\cap\LL_{w_j,\alpha}$ and
$\LL_{w_i,\beta}\cap\LL_{w_j,\beta}$ mix with each other; we simply use
$\alpha,\beta$-swaps at $w_i$ and $w_j$.  So, in this proof,\aside{$\LL_{i,j}$} 
let $\LL_{i,j}:=\cup_{\alpha\in[k]}(\LL_{w_i,\alpha}\cap\LL_{w_j,\alpha})$.
For convenience, when $w_iw_j\in E(G)$, let $\LL_{i,j}:=\emptyset$.
So $\LL=\cup_{i,j\in[k], i\ne j}\LL_{i,j}$.

By Lemma~\ref{common-lem}, if $w_iw_j\notin E(G)$, then $\LL_{i,j}$ mixes.
Fix distinct $i,j,\ell\in[k]$ such that $w_iw_j,w_jw_{\ell}\notin E(G)$.  We
show that $\LL_{i,j}$ mixes with $\LL_{j,{\ell}}$.  Let $G':=G-vw_j$,
let $L'(w_i):=\{1\}$, $L'(w_{\ell})=\{2\}$, $L'(w_j):=\{1,2\}$, and
$L'(x):=[k]$ for all $x\in V(G)-\{w_i,w_j,w_{\ell}\}$.  Let
$G'':=G-\{w_i,w_j,w_{\ell}\}$.  Note that $G''$ is connected, since $G$ is
4-connected.  Let $\sigma''$ be a vertex order of $V(G'')$ by non-increasing
distance from $v$, and let $\sigma':=w_i,w_{\ell},w_j,\sigma''$.  Now $\sigma'$
shows that $\LL'$ mixes, since each vertex $x$ is preceded by fewer than
$|L'(x)|$ neighbors in $\sigma'$.

Now consider distinct $h,i,j,\ell\in[k]$ such that $w_hw_i,w_jw_{\ell}\notin
E(G)$.  We must show that $\LL_{h,i}$ mixes with $\LL_{j,\ell}$.  
If $w_hw_j\notin E(G)$, then $\LL_{h,i}$ mixes with $\LL_{h,j}$ and $\LL_{h,j}$
mixes with $\LL_{j,\ell}$, as above, so we are done.  Assume instead that
$w_hw_j\in E(G)$.  By symmetry, also $w_iw_j,w_iw_{\ell},w_{\ell}w_h\in E(G)$. 
However, now $G$ contains an induced 4-wheel, a contradiction.
\end{proof}

Now we combine the previous four lemmas to completely handle the 4-connected
case.

\begin{thm}
Let $G$ be 4-connected.
If $G$ is $k$-regular and not $K_{k+1}$, then $G$ is
$k$-swappable.  If $G=K_{k+1}$, $L$ is a $k$-assignment, and $L$ is not
identical everywhere, then $G$ is $L$-swappable.
\label{connectivity4}
\end{thm}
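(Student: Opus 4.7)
The plan is to dispatch the two cases of the theorem by assembling the four preceding lemmas. First, for the case $G = K_{k+1}$ with $L$ a $k$-assignment that is not identical everywhere, Lemma~\ref{clique-lem} applies directly and immediately gives that $G$ is $L$-swappable. So the substance is the first case: $G$ is 4-connected, $k$-regular, and $G \ne K_{k+1}$, and we must show $G$ is $k$-swappable. Fix an arbitrary $k$-assignment $L$ and split into two subcases based on whether $L$ is identical everywhere.

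If $L$ is not identical everywhere, then because $G$ is connected there exist adjacent vertices $v, w_1$ with $L(v) \ne L(w_1)$, so Lemma~\ref{4connected-lem} applies (using that $G$ is not a clique) and $G$ is $L$-swappable. The remaining subcase is when $L$ is identical everywhere; after relabeling we may assume $L(v) = [k]$ for all $v \in V(G)$. Here we further split on whether $G$ contains an induced copy of $W_4$. If it does not, then Lemma~\ref{noW4-coloring-lem} directly yields $L$-swappability.

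If instead $G$ contains an induced $W_4$, we apply the Key Lemma with $H := W_4$, which is degree-swappable by Lemma~\ref{W4-lem}. The Key Lemma then gives that $G$ is degree-swappable; since $G$ is $k$-regular, a degree-assignment is precisely a $k$-assignment, so this is the same as $k$-swappability. Combining the two subcases completes the first half of the theorem.

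I do not expect any serious obstacle here: all the technical work has been done in the preceding lemmas, and the only job of this theorem is to verify that the three scenarios for $L$ (not identical; identical with induced $W_4$; identical without induced $W_4$) are exactly covered by Lemmas~\ref{4connected-lem}, \ref{W4-lem} (via the Key Lemma), and \ref{noW4-coloring-lem}, with the $K_{k+1}$ exception handled separately by Lemma~\ref{clique-lem}. The one small thing to be careful about is the identification of $k$-swappable with degree-swappable for $k$-regular $G$, which is what lets the Key Lemma output (degree-swappability) match what the theorem demands ($k$-swappability).
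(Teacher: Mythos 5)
Your proposal is correct and follows essentially the same route as the paper: both dispatch the theorem by combining Lemmas~\ref{4connected-lem}, \ref{clique-lem}, \ref{W4-lem} (via Lemma~\ref{H-lem}), and \ref{noW4-coloring-lem}, with the identification of $k$-swappability and degree-swappability for $k$-regular graphs handled just as you note. The only cosmetic difference is the order of the case split (the paper handles an induced $W_4$ first, for all $L$ at once, before splitting on whether $L$ is identical), which does not change the substance.
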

\vspace{-.3in}

\begin{proof}
If $G$ contains $W_4$ as an induced subgraph, then we are done by
Lemma~\ref{W4-lem} and Lemma~\ref{H-lem}.  So assume it does not.  Fix a
$k$-assignment $L$ for $G$.  If there exist $x,y\in V(G)$ such that $L(x)\ne
L(y)$, then $G$ is $L$-swappable, by Lemma~\ref{4connected-lem} or
Lemma~\ref{clique-lem}.  So assume instead that $L(x)=L(y)$ for all $x,y\in
V(G)$.  By symmetry, we assume that $L(v)=[k]$ for all $v\in V(G)$.  Now we are
done by Lemma~\ref{noW4-coloring-lem}.
\end{proof}

\section*{Acknowledgment}
Thanks to a referee for carefully reading this manuscript and making numerous
suggestions that helped improve the presentation.

\scriptsize{

}

\end{document}